\newtheorem{thm}{Theorem}[section]
\newtheorem{prop}[thm]{Proposition}
\newtheorem{cor}[thm]{Corollary}
\newtheorem{lem}[thm]{Lemma}
\newtheorem{obs}[thm]{Observation}
\begin{document}

\title{A $p$-adic Approach to the Weil Representation of Discriminant Forms
Arising from Even Lattices}

\author{Shaul Zemel\thanks{The initial stage of this research has been carried
out as part of my Ph.D. thesis work at the Hebrew University of Jerusalem,
Israel. The final stage of this work was supported by the Minerva Fellowship
(Max-Planck-Gesellschaft).}}

\maketitle

\section*{Introduction}

Let $M$ be an even lattice with dual $M^{*}$ and level $N$. The group
$Mp_{2}(\mathbb{Z})$, the unique non-trivial double cover of
$SL_{2}(\mathbb{Z})$, admits a representation $\rho_{M}$, called the \emph{Weil
Representation}, on the space $\mathbb{C}[M^{*}/M]$. This representation arises
naturally in the theory of theta functions, since a neat description of the
theta function of the lattice $M$ is given by a $\mathbb{C}[M^{*}/M]$-valued
function. This theta function is a modular form with representation $\rho_{M}$
(see, for example, Theorem 4.1 of \cite{[B1]}). In fact, the first papers
dealing with Weil representations (\cite{[Scho]}, \cite{[Kl]}, and others)
studied the modularity of theta functions even before the introduction of the
abstract, general Weil representation in \cite{[W]}. The Weil representations
now play an important role in various branches of mathematics. One example is the representation theory of $SL_{2}(\mathbb{Z}_{p})$ and $GL_{2}(\mathbb{Z}_{p})$, where they include all the irreducible continuous representations of the former group and most of those of the latter (see \cite{[No]} and \cite{[NW2]}). In addition, Examples of important Number-Theoretic applications of the modular behavior of theta functions with respect to the Weil representation can be found in \cite{[B2]} and \cite{[Z2]}.

Several properties of the Weil representation have been known for a long time.
For example, the fact the Weil representation factors through a finite quotient
which is a double cover of $SL_{2}(\mathbb{Z}/N\mathbb{Z})$ is already given, in
a different presentation, in \cite{[Scho]}. In fact, every irreducible
representation of $SL_{2}(\mathbb{Z})$ which factors through such a congruence
quotient is contained in a Weil representation (this follows from the results
of \cite{[No]} and \cite{[NW2]} mentioned above). Moreover, the seminal paper
\cite{[W]}, which initiated the much more general theory of Weil
representations, provides, up to some roots of unity, formulae for the
representation of matrices in which $c=0$ or in which $c$ is invertible (see Eq.
(16) of that reference). Several papers have given explicit formulae for the
action of more general matrices of $Mp_{2}(\mathbb{Z})$, under some conditions
(see \cite{[Kl]}, \cite{[NW1]}, and \cite{[Wo]}), where some (like \cite{[Sn]})
give expressions in terms of Gauss sums which are not explicitly evaluated.
Closed and explicit formulae for the action of the general element of
$Mp_{2}(\mathbb{Z})$ via $\rho_{M}$ seem to appear only recently, in
\cite{[Sche]} for the even signature case and \cite{[Str]} for the general case.
Such formulae are important for calculational purposes --- see, e.g., the
applications mentioned in the Introduction to \cite{[Str]}.

In all these works theta functions play an essential role. Indeed, they are used
in \cite{[Scho]} to prove the factoring of the Weil representation through a
finite quotient, as well as in more general works like \cite{[Kl]},
\cite{[NW1]}, and \cite{[W]}. \cite{[B3]} also uses theta functions to prove
assertions about Weil representations. Later, the factoring property is used in
\cite{[Sche]} and in \cite{[Str]} to prove their formulae. The action of
elements of the form $ST^{m}ST^{n}$ is explicitly calculated there, and then one
evaluate the action of a general element of $SL_{2}(\mathbb{Z})$ or
$Mp_{2}(\mathbb{Z})$ by carefully keeping track of the roots of unity appearing
in the calculation. The formula in \cite{[Sn]} is also proved using theta
functions.

\smallskip

The main aim of this paper is to show how the formulae for the $\rho_{M}$-action
of a general element of $Mp_{2}(\mathbb{Z})$ can be obtained by a direct
evaluation, not depending on ``external objects'' such as theta functions. This
may open the subject to generalizations for which these theta functions do not
exist or their properties are not yet well established. An immediate extension
of this method allows one to prove results about Weil representations arising
from odd lattices as well, where the acting group is an index 3 subgroup of
$Mp_{2}(\mathbb{Z})$ (in this case a treatment using theta functions is also
available, though the associated finite group is no longer a finite quadratic
module in the usual sense). The main technical difficulty is that in most
matrices in $SL_{2}(\mathbb{Z})$, the lower left entry is neither 0 nor
invertible in $\mathbb{Z}$, so that the formulae from \cite{[W]} cannot be
applied directly.

Our method thus goes as follows. We begin by decomposing the Weil representation
$\rho_{M}$ into $p$-parts, in which each $p$-part can be seen as subspace of the
Schwartz functions on the $p$-adic vector space $M_{\mathbb{Q}_{p}}$. This idea
was proposed to the author by E. Lapid. Then we consider the Weil representation
of $Mp_{2}(\mathbb{Q}_{p})$ on the space of Schwartz functions on
$M_{\mathbb{Q}_{p}}$, observe that elements lying over $SL_{2}(\mathbb{Z}_{p})$
preserve our finite-dimensional subspace, and see that restricting to
$Mp_{2}(\mathbb{Z})$ just gives the $p$-part of $\rho_{M}$ again. Since any
non-zero element of $\mathbb{Q}_{p}$ is invertible, it seems that all one has to
do is to evaluate the expressions from Eq. (16) of \cite{[W]} explicitly. The
result involves, in most cases, a Gauss sum (which appears globally without its
explicit value in \cite{[Sn]} and others), which we can (and do) evaluate. Note
that even though any matrix in $SL_{2}(\mathbb{Z}_{p})$ can be written as the
product of at most two matrices with invertible lower left entries, reducing
the expressions arising from evaluating such products to a closed formula turns
out not to be shorter or simpler than our calculations.

The evaluation requires, however, one further, non-trivial step. The operators
attained by the Weil representation are not always those appearing in the
formulae from \cite{[W]}, but rather their multiples by certain roots of unity.
For determining these one has to find which pair of elements, lying over a matrix in $SL_{2}(\mathbb{Q}_{p})$, belong to the metaplectic double cover (and not just to the $S^{1}$-cover). \cite{[Ra]} has determined the full metaplectic double cover $Mp(V)$ of the symplectic group $Sp_{\mathbb{F}}(V \times V)$ for every local field of $\mathbb{F}$ characteristic different from 2, but the
coefficients are given in symplectic notation. We adapt the formulae from
\cite{[Ra]} to obtain a neater presentation of the formulae for the
coefficients, using the Weil index of the quadratic form on $V$ and of
associated quadratic forms, for matrices in $SL_{2}(\mathbb{F})$. The references
\cite{[Ku2]} and \cite{[Ge]} show that the metaplectic cover splits over the
ring of integers wherever the residue field has characteristic different from 2,
a fact which simplifies many calculations. Multiplying the expressions mentioned
in the previous paragraph by the roots of unity described here completes the
evaluation of $\rho_{M}(A)$ for arbitrary matrix $A \in Mp_{2}(\mathbb{Z})$.
This reproduces the formulae of \cite{[Sche]} and \cite{[Str]} for the case of
even $M$, as well as the formula of \cite{[Sn]}. The results for odd lattices
seem to be new.

Since in both \cite{[Sche]} and in \cite{[Str]} the general root of unity
appears as the product of ``$p$-adic factors'', apparently one cannot avoid a
(maybe implicit) $p$-adic decomposition. We remark that \cite{[Sche]} and
\cite{[Str]} consider just finite quadratic modules, while the structure which
we consider includes also an even lattice which yields the finite quadratic
module as its discriminant form . However, our results contain those of
\cite{[Sche]} and \cite{[Str]} in complete generality, since it is well-known
that any finite quadratic module is the discriminant form of some even lattice
(see \cite{[N]}, for example). It it important to note that our $p$-adic factors
(in the even lattice case) do \emph{not} coincide with those of \cite{[Sche]}
and \cite{[Str]}. However, their \emph{total product} does give the same result
as in \cite{[Sche]} and \cite{[Str]}.

Apart from being simple and direct, this method has several advantages. One may
be interested in generalizing the formulae from \cite{[Sche]} and \cite{[Str]}
to Weil representations of larger groups. One example is the Weil representation
arising from a lattice over some ring in a number field other than $\mathbb{Q}$.
Such representations appear, for example, in \cite{[Br]}. Another example might
be (covers of) subgroups of the symplectic group $Sp(M \times M)$ which are
larger than $SL_{2}(\mathbb{Z})$. In these cases the properties proved using
theta series are not known, so that having a direct method may turn out more
useful for obtaining such generalizations. Indeed, we have carried out the local
computations for an arbitrary non-archimedean local field, so that after
evaluating certain Gauss sums the first generalization may be a feasible task.
Even though not every discriminant form over a number field arises from a
lattice (see, e.g., \cite{[Boy]}), such an evaluation may give the idea of how
the representation looks like in general. The second generalization may be
obtained, in some cases, by combining our method with the results of
\cite{[Ra]}.

\smallskip

The paper is divided into 7 sections. Section \ref{LatWRep} goes over the basic
definitions of lattices and the corresponding Weil representations. In Section
\ref{Decom} we present the decomposition into $p$-parts, and the identification
with a subspace of $\mathcal{S}(M_{\mathbb{Q}_{p}})$. Section \ref{GausspLat}
describes Jordan decompositions of $p$-adic lattices and evaluates some
generalized quadratic Gauss sums. In Section \ref{Meta} we survey the Weil
representation associated to a vector space over a local field
$\mathbb{F}\neq\mathbb{C}$, and present the lift of \cite{[Ge]} and
\cite{[Ku2]}. Section  \ref{Local} evaluates the operators from \cite{[W]} on
certain Schwartz functions on vector spaces over non-archimedean local fields.
In Section \ref{Main} we obtain our main results for even lattices. Finally, in
Section \ref{OddGen} we state the necessary adjustments for the case of odd
lattices, and discuss some further possible generalizations.

\smallskip

I am deeply indebted to E. Lapid for his proposal to look for a $p$-adic proof
to the factoring of the Weil representation through (a double cover of)
$SL_{2}(\mathbb{Z}/N\mathbb{Z})$, which initiated my work on this paper (and the
corresponding part in my Ph.D. thesis). I would also like to thank my Ph.D.
advisor R. Livn\'{e} and to H. M. Farkas for their help. I also thank J.
Bruinier, N. Scheithauer and F. Str\"{o}mberg for fruitful discussions while
writing this paper and for referring me to \cite{[Ku1]}. Special thanks are to
T. Yang, for referring me to \cite{[Ra]}. I am also grateful for the two
referees, whose remarks have greatly contributed to the presentation of the
results of this paper.

\section{Even Lattices and Weil Representations \label{LatWRep}}

In this section we give the basic definitions of lattices, the real and integral
metaplectic groups, and the Weil representation.

\medskip

Let $R$ be an integral domain, with field of fractions $\mathbb{K}$ of
characteristic different from 2, and group $R^{*}$ of invertible elements. An
\emph{$R$-lattice} is a free $R$-module $M$, of finite rank which we denote
$rk(M)$, endowed with a symmetric non-degenerate bilinear form, denoted
$(\cdot,\cdot):M \times M \to R$. We write $\lambda^{2}$ for $(\lambda,\lambda)$
with $\lambda \in M$, as well as $M_{S}$ for $M\otimes_{R}S$ wherever $S$ is an
integral domain containing $R$. Multiplying the bilinear form on $M$ by some $0
\neq a \in R$ yields again an $R$-lattice, which we denote $M(a)$. The
\emph{dual module} $M^{*}=Hom(M,R)$ is contained in $M_{\mathbb{K}}$, and
contains $M$. The quotient $D_{M}=M^{*}/M$ is a torsion $R$-module of finite
rank called the \emph{discriminant} of $M$. It inherits a non-degenerate
symmetric $\mathbb{K}/R$-valued bilinear form. An $R$-lattice is called just a
\emph{lattice} (resp. a \emph{$p$-adic lattice}) if $R=\mathbb{Z}$ (resp.
$R=\mathbb{Z}_{p}$). If $R$ is the ring of integers in a global field
$\mathbb{K}$ and $v$ is a non-archimedean place of $\mathbb{K}$ with ring of
integers $\mathcal{O}_{v}$ then we shorten $M_{\mathcal{O}_{v}}$ to simply
$M_{v}$. In this case $D_{M}$ is finite, and we denote its cardinality by
$\Delta_{M}$. The definition we made is perhaps not the most general one (i.e.,
many authors allow the module to be projective and not just free), but will be
sufficient for our purposes as the rings we consider in this papers will all be
principal ideal domains.

An $R$-lattice $M$ is called \emph{even} if 2 divides $\lambda^{2}$ for every
$\lambda \in M$. This is always the case if $2 \in R^{*}$. If $M$ is even then
so is $M_{S}$ for every integral domain $S$ containing $R$. This is a local
property: If $\mathbb{K}$ is a global field and $R$ is its ring of integers then
an $R$-lattice $M$ is even if and only if $M_{v}$ is even for every place $v$ of
$\mathbb{K}$ (namely for every $v$ which lies over 2). If $M$ is even then
$q:\lambda\mapsto\frac{\lambda^{2}}{2}$ is an $R$-valued quadratic form on $M$,
which gives rise to a $\mathbb{K}/R$-valued quadratic form on $D_{M}$ (if
$R=\mathbb{Z}$ then this makes $D_{M}$ a finite quadratic module in the language
of \cite{[Str]} and a discriminant form in the language of \cite{[Sche]}). A
lattice which is not even will be called \emph{odd}.

For an $R$-lattice $M$ we call the ideal $N=\big\{a \in
R\big|a\frac{\gamma^{2}}{2} \in R\mathrm{\ for\ all\ }\gamma \in M^{*}\big\}$
the \emph{level} of $M$. Hence $a(\gamma,\delta) \in R$ for $a \in N$ and
$\gamma$ and $\delta$ from $M^{*}$. In the global case the level of $M_{v}$ is
$N\otimes_{R}\mathcal{O}_{v}$ for any non-archimedean place $v$. For
$R=\mathbb{Z}$ or $R=\mathbb{Z}_{p}$ we allow the slight abuse of notation in
which $N$ may also denote a generator of that ideal. An even lattice is
\emph{unimodular} if and only if it has level 1 (see more generally Lemma
\ref{pfin} below), but this statement is false for odd lattices.

For a $\mathbb{Z}$-lattice $M$ we define its \emph{signature} $sgn(M)$ to be the
signature of $M_{\mathbb{R}}$, namely the dimension of a maximal positive
definite subspace of $M_{\mathbb{R}}$ minus the dimension of a maximal negative
definite subspace there. Its image modulo 8 is what is referred to as the
signature of $D_{M}$ in \cite{[Sche]} and \cite{[Str]}. Adopting the notation
$\mathbf{e}(z)=e^{2\pi iz}$ for complex $z$ and denoting the root of unity
$\mathbf{e}\big(\frac{1}{8}\big)$ (which will appear many times in this paper)
by $\zeta_{8}$, we have \emph{Milgram's formula}, which states that
\[\sum_{\gamma \in
D_{M}}\mathbf{e}\bigg(\frac{\gamma^{2}}{2}\bigg)=\zeta_{8}^{sgn(M)}\sqrt{\Delta_
{M}}.\] The group $\big\{z\in\mathbb{C}\big||z|=1\big\}$ will be denoted
$S^{1}$ in this paper.

\medskip

The group $SL_{2}(\mathbb{R})$ admits a non-trivial double cover
$Mp_{2}(\mathbb{R})$, which has several equivalent descriptions. We use here
the description commonly used in the theory of modular forms (of half-integral
weight), and in Section \ref{Meta} we present the realization arising from the
general theory of \cite{[W]} and give the isomorphism between them. We recall
that any element $A=\binom{a\ \ b}{c\ \ d} \in SL_{2}(\mathbb{R})$ acts on the
upper half-plane $\mathcal{H}=\{\tau\in\mathbb{C}|\Im\tau>0\}$ by
$A\tau=\frac{a\tau+b}{c\tau+d}$, with the factor of automorphy
$j(A,\tau)=c\tau+d$. The group $Mp_{2}(\mathbb{R})$ consists of pairs
$(A,\varphi)$ with $A \in SL_{2}(\mathbb{R})$ and $\varphi$ a holomorphic
function on $\mathcal{H}$ satisfying $\varphi(\tau)^{2}=j(A,\tau)$. The
multiplication is defined by
\[(A,\varphi)(B,\psi)=\big(AB,\tau\mapsto\varphi(B\tau)\psi(\tau)\big),\] which
is well-defined by the cocycle condition $j(AB,\tau)=j(A,B\tau)j(B,\tau)$.

We define $Mp_{2}(\mathbb{Z})$ to be the set of elements in $Mp_{2}(\mathbb{R})$
which lie over $SL_{2}(\mathbb{Z})$. This is a double cover of
$SL_{2}(\mathbb{Z})$. The algebraic description of $Mp_{2}(\mathbb{Z})$ is based
on the 3 elements \[T=\left(\left(\begin{array}{cc} 1 & 1 \\ 0 &
1\end{array}\right),1\right),\quad S=\left(\left(\begin{array}{cc} 0 & -1 \\ 1 &
0\end{array}\right),\sqrt{\tau}\right),\quad Z=\left(\left(\begin{array}{cc} -1
& 0 \\ 0 & -1\end{array}\right),i\right)\] of $Mp_{2}(\mathbb{Z})$, where
$\sqrt{\tau}$ in $S$ takes values with positive real and imaginary parts.
The elements $T$ and $S$ generate $Mp_{2}(\mathbb{Z})$, $Z$ is of order 4
and generates the center of $Mp_{2}(\mathbb{Z})$, and the identities
$S^{2}=(ST)^{3}=Z$ hold. Moreover, these are the only relations in
$Mp_{2}(\mathbb{Z})$. We shall use the same notation $T$, $S$, and $Z$ for
the images of these elements in $SL_{2}(\mathbb{Z})$, as well as in
$Mp_{2}(\mathbb{Z}_{p})$ and in $SL_{2}(\mathbb{Z}_{p})$ for any prime $p$,
without risking confusion. $Mp_{2}(\mathbb{R})$ and $Mp_{2}(\mathbb{Z})$ are
non-trivial covers of $SL_{2}(\mathbb{R})$ and $SL_{2}(\mathbb{Z})$
respectively.

\smallskip

Let $G$ be a locally compact Abelian group. The anti-symmetrization of the
pairing between $G$ and its Pontryagin dual $\widehat{G}$ gives a symplectic
structure on $G\times\widehat{G}$. The symplectic group $Sp(G)$ is defined (as
in \cite{[W]}) to be the group of endomorphisms of $G\times\widehat{G}$ which
preserves this symplectic structure, and the general theory of \cite{[W]} now
gives a faithful unitary representation of an $S^{1}$-cover of $Sp(G)$ on the
space $L^{2}(G)$ which leaves the dense subspace $\mathcal{S}(G)$ of Schwartz
functions invariant. We note that elements of $Sp(G)$ can be written as
$2\times2$ matrices, having one coordinate in $End(G)$, one in
$Hom(G,\widehat{G})$, one in $Hom(\widehat{G},G)$, and one in $End(\widehat{G})$
(satisfying the symplectic condition).

Let $f$ be a non-degenerate character of second degree on $G$. This means a map
$f:G \to S^{1}$ such that the map $(x,y)\mapsto\frac{f(x+y)}{f(x)f(y)}$ is a
bi-homomorphism, and the (symmetric) homomorphism $\rho:G\to\widehat{G}$,
$\rho(x):y\mapsto\frac{f(x+y)}{f(x)f(y)}$ attached to this bi-homomorphism is an
isomorphism. We may thus view $Sp(G)$ as contained in $M_{2}\big(End(G)\big)$.
The intersection of $Sp(G)$ with $M_{2}(\mathbb{Z})$ is precisely
$SL_{2}(\mathbb{Z})$. Restriction yields a representation of an $S^{1}$-cover of
$SL_{2}(\mathbb{Z})$. The classical generators $T$ and $S$ in
$SL_{2}(\mathbb{Z})$ can always be lifted to the elements
$T_{f}=\mathbf{t}_{0}(f)$ and $\widetilde{S}_{f}=\mathbf{d}_{0}'(\rho^{-1})$ (in
the notation of \cite{[W]}), and then one lifts Eq. (9) of \cite{[W]} from
$Sp(G)$ to a similar equation in its $S^{1}$-cover containing a factor
$\gamma(f)$. Following \cite{[Ra]} and others, we call this factor the
\emph{Weil index} of $f$. By Theorem 2 of \cite{[W]}, the Weil index $\gamma(f)$
of $f$ appears in the (distribution-theoretic) Fourier transform of $f$, and
both in this Fourier transform and in $\widetilde{S}_{f}$ the modulus of $\rho$
shows up. We normalize the Haar measure on $G$ so that this modulus equals
unity. Now, if we further assume that $f(-x)=f(x)$ for any $x \in G$ (we call
such $f$ \emph{symmetric}), which is equivalent to the statement that $T_{f}$
commutes with the parity operator $\widetilde{S}_{f}^{2}=\mathbf{d}_{0}(-1)$,
then the lifted Eq. (9) can be written as
$(\widetilde{S}_{f}T_{f})^{3}=\gamma(f)\widetilde{S}_{f}^{2}$ (without the
symmetry condition on $f$, the left hand side is a bit more complicated). This
shows that by defining $S_{f}=\overline{\gamma(f)}\widetilde{S}_{f}$ we obtain
the relation $(S_{f}T_{f})^{3}=S_{f}^{2}$, and the square of this common element
$Z_{f}$ is scalar multiplication by $\overline{\gamma(f)}^{4}$. Hence we obtain
a unitary representation of the pre-image of $SL_{2}(\mathbb{Z})$ in the
universal cover of $SL_{2}(\mathbb{R})$, in which the order of $Z_{f}$ is twice
the order of $\gamma(f)^{4}$ in $S^{1}$. Wherever $\gamma(f)^{8}=1$ the
representation is of $Mp_{2}(\mathbb{Z})$, factoring through
$SL_{2}(\mathbb{Z})$ if and only if $\gamma(f)^{4}=1$.

The Weil representation has the following multiplicative property.
\begin{prop}
Let $G$ and $H$ be locally compact Abelian groups with symmetric non-degenerate
characters of second degree $f$ and $g$ respectively, and let $\rho_{G,f}$ and
$\rho_{H,g}$ be the associated Weil representations. Then restricting the Weil
representation $\rho_{G \times H,f+g}$ from $L^{2}(G \times H)$ (or
$\mathcal{S}(G \times H)$) to the tensor product
$L^{2}(G)\otimes_{\mathbb{C}}L^{2}(H)$ (or
$\mathcal{S}(G)\otimes_{\mathbb{C}}\mathcal{S}(H)$) yields just
$\rho_{G,f}\otimes\rho_{H,g}$. In particular $\gamma(f+g)=\gamma(f)\gamma(g)$. A
similar assertion holds for any finite product. \label{prod}
\end{prop}

\begin{proof}
This follows directly from the results of Section 22 of \cite{[W]}.
\end{proof}

Consider now $G=D_{M}$ with the quadratic form $q$, hence with the character of
second degree $f=\mathbf{e} \circ q$. We identify $D_{M}$ with its dual Abelian
group $Hom(D_{M},\mathbb{Q}/\mathbb{Z})$ and with its dual locally compact group
$\widehat{D_{M}}$ via the bilinear form and $\mathbf{e}$. We denote the space
$L^{2}(D_{M})=\mathbb{C}[M^{*}/M]$ by $V_{\rho_{M}}$. The canonical basis
$(e_{\gamma})_{\gamma \in M^{*}/M}$ is $L^{2}$-orthogonal, with all the elements
having the same $L^{2}$-norm $\frac{1}{\sqrt{\Delta_{M}}}$ in our normalization.
Theorem 2 of \cite{[W]} and Milgram's formula imply
$\gamma(f)=\zeta_{8}^{sgn(M)}$. Hence $\gamma(f)^{8}=1$ and the representation
is of $Mp_{2}(\mathbb{Z})$. It is described explicitly by the familiar formulae
appearing in \cite{[B1]}, \cite{[B2]}, and \cite{[Str]}:
\begin{equation}
\begin{array}{rcl} \rho_{M}(T)(e_{\gamma}) & =
& \mathbf{e}(\gamma^{2}/2)e_{\gamma} \\
\rho_{M}(S)(e_{\gamma}) & = &
\displaystyle{\frac{\zeta_{8}^{-sgn(M)}}{\sqrt{\Delta_{M}}}\sum_{\delta \in
M^{*}/M}\mathbf{e}(-(\gamma,\delta))e_{\delta}}.\end{array} \label{rhoMform}
\end{equation}
This is the representation which is the main object of research in this paper.
The condition $\gamma(f)^{4}=1$, for the representation to factor through
$SL_{2}(\mathbb{Z})$, is equivalent to the signature (or equivalently the rank)
of $M$ being even. Note that \cite{[Sche]} considers the dual represenstation
$\rho_{M(-1)}$. Proposition \ref{prod} implies that the representation $\rho_{M
\oplus N}$ associated with an orthogonal direct sum is the tensor product
$\rho_{M}\otimes\rho_{N}$. This resembles the basic idea of the decomposition
into $p$-parts considered below.

\section{Decomposition into $p$-Parts \label{Decom}}

In this section we show how $\rho_{M}$ can be written as the tensor product of
representations on finite dimensional spaces of Schwartz functions on $p$-adic
spaces. We shall ultimately evaluate the action of elements of
$Mp_{2}(\mathbb{Z})$ on these spaces of Schwartz functions, considered as
elements of metaplectic groups over $p$-adic fields.

\medskip

First we introduce some notation. For any prime $p$ there is a natural
isomorphism between $\mathbb{Q}_{p}/\mathbb{Z}_{p}$ and the subgroup
$\mathbb{Z}\big[\frac{1}{p}\big]/\mathbb{Z}$ of $\mathbb{Q}/\mathbb{Z}$.
Composing this embedding with $\mathbf{e}$ yields a character on
$\mathbb{Q}_{p}$, with kernel $\mathbb{Z}_{p}$, which we denote $\chi_{p}$. We
have the equality $\mathbf{e}(x)=\prod_{p}\chi_{p}(x)$ (with almost all factors
being equal to 1) for every
$x\in\mathbb{Q}/\mathbb{Z}=\bigoplus_{p}\mathbb{Z}\big[\frac{1}{p}\big]/\mathbb{
Z}$, hence also for $x\in\mathbb{Q}$. This elementary observation will turn out
to be very useful later.

Given an even lattice $M$, we decompose $D_{M}$ as the direct sum
$\bigoplus_{p}(D_{M})_{p}$, where
$(D_{M})_{p}=D_{M}\otimes_{\mathbb{Z}}\mathbb{Z}_{p}=M_{p}^{*}/M_{p}=D_{M_{p}}$
has cardinality $\Delta_{M_{p}}=p^{v_{p}(\Delta_{M})}$ for every prime $p$. To
each $D_{M_{p}}$ we associate, as in Section \ref{LatWRep}, a Weil
representation $\rho_{M_{p}}$ on the space
$V_{\rho_{M_{p}}}=L^{2}(D_{M_{p}})=\mathbb{C}[M_{p}^{*}/M_{p}]$ using the
character of second degree $f_{p}=\chi_{p} \circ q_{p}$. It may be described
explicitly as in Eq. \eqref{rhoMform} by using the natural basis
$(e_{\gamma_{p}})_{\gamma_{p} \in M_{p}^{*}/M_{p}}$ for $V_{\rho_{M_{p}}}$, but
then the root of unity $\zeta_{8}^{sgn(M)}$ must be replaced by the Weil index
$\gamma(f_{p})$. The fourth power of this Weil index is 1 for odd $p$ and
$(-1)^{rk(M)}$ for $p=2$, so that the representation thus obtained is of
$Mp_{2}(\mathbb{Z})$, factoring through $SL_{2}(\mathbb{Z})$ if $p$ is odd or
$rk(M)$ is even. We now have
\begin{lem}
For any prime $p$, the following are equivalent: $(i)$ $p$ does not divide
$\Delta_{M}$. $(ii)$ $p$ does not divide $N$. $(iii)$ The representation
$\rho_{M_{p}}$ is trivial. \label{pfin}
\end{lem}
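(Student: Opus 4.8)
The plan is to prove the three equivalences via a cycle, $(i)\Leftrightarrow(ii)$ and $(ii)\Leftrightarrow(iii)$, working entirely with the $p$-adic lattice $M_{p}$. First I would reduce everything to $p$-local statements. By definition $p\mid\Delta_{M}$ if and only if the $p$-Sylow component $D_{M_{p}}=M_{p}^{*}/M_{p}$ is nontrivial, since $\Delta_{M_{p}}=p^{v_{p}(\Delta_{M})}$. Likewise, since the level localizes (the level of $M_{v}$ is $N\otimes_{R}\mathcal{O}_{v}$, as noted in the excerpt), we have $p\mid N$ if and only if the level of $M_{p}$ is a proper ideal of $\mathbb{Z}_{p}$, i.e.\ not all of $\mathbb{Z}_{p}$. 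So the real content is a statement about a single $p$-adic lattice: its discriminant group is trivial iff its level is the unit ideal iff the associated Weil representation is trivial.

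\emph{The equivalence $(i)\Leftrightarrow(ii)$.} Here I would argue that $D_{M_{p}}$ is trivial exactly when $M_{p}$ is unimodular over $\mathbb{Z}_{p}$, i.e.\ $M_{p}^{*}=M_{p}$. For a $p$-adic lattice this is elementary: choosing a basis, $M_{p}^{*}=M_{p}$ iff the Gram matrix has unit determinant in $\mathbb{Z}_{p}^{*}$, i.e.\ $p\nmid\det$. If $M_{p}$ is unimodular then every $\gamma\in M_{p}^{*}=M_{p}$ has $\gamma^{2}/2\in\mathbb{Z}_{p}$, so the level is $\mathbb{Z}_{p}$ (one must check evenness supplies the factor of $2$, but over $\mathbb{Z}_{p}$ for odd $p$ this is automatic, and for $p=2$ it follows from $M$ being even). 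Conversely, if the level is $\mathbb{Z}_{p}$ then $\gamma^{2}/2\in\mathbb{Z}_{p}$ for all $\gamma\in M_{p}^{*}$, whence the pairing $(\gamma,\delta)$ lands in $\mathbb{Z}_{p}$ for all $\gamma,\delta\in M_{p}^{*}$; this forces $M_{p}^{*}\subseteq M_{p}^{**}=M_{p}$, giving unimodularity and hence $(i)$.

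\emph{The equivalence $(ii)\Leftrightarrow(iii)$.} If $p\nmid N$, i.e.\ $D_{M_{p}}$ is trivial, then $V_{\rho_{M_{p}}}=\mathbb{C}[M_{p}^{*}/M_{p}]$ is one-dimensional, spanned by $e_{0}$, and the explicit formulae for $\rho_{M_{p}}(T)$ and $\rho_{M_{p}}(S)$ given just above the lemma show both act by the scalar $1$ (for $S$ one uses that the Weil index of the trivial form is $1$, and $\Delta_{M_{p}}=1$). Since $T$ and $S$ generate, $\rho_{M_{p}}$ is trivial, giving $(iii)$. For the contrapositive of the converse, suppose $D_{M_{p}}$ is nontrivial; then I would exhibit a nonzero $\gamma_{p}\in M_{p}^{*}/M_{p}$ and show $\rho_{M_{p}}(S)$ is nontrivial, most cleanly by noting that the matrix of $\rho_{M_{p}}(S)$ in the basis $(e_{\gamma_{p}})$ has entries $\frac{\overline{\gamma(f_{p})}}{\sqrt{\Delta_{M_{p}}}}\chi_{p}(-(\gamma_{p},\delta_{p}))$, all of absolute value $\Delta_{M_{p}}^{-1/2}<1$, so it cannot be the identity whenever $\Delta_{M_{p}}>1$.

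I expect the main obstacle to be the careful handling of the factor of $2$ at the prime $p=2$ throughout the $(i)\Leftrightarrow(ii)$ step: the level is defined in terms of $\gamma^{2}/2$ while unimodularity is a statement about the bilinear form $(\gamma,\delta)$, and passing between them requires the evenness hypothesis to control the $2$-adic valuation. Everything else is routine linear algebra over the principal ideal domain $\mathbb{Z}_{p}$ together with direct inspection of the displayed representation formulae.
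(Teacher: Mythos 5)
Your proposal is correct, and while it shares the paper's overall skeleton (localize at $p$, then relate the triangle of statements through integrality of $\gamma^{2}/2$ on $M_{p}^{*}$), two of the four implications are carried out by genuinely different mechanisms. The coinciding parts: your $(ii)\Rightarrow(i)$ is exactly the paper's (integrality of $\gamma^{2}/2$ gives, by polarization, integrality of the pairing on $M_{p}^{*}$, forcing $M_{p}^{*}\subseteq(M_{p}^{*})^{*}=M_{p}$), and your $(ii)\Rightarrow(iii)$ matches as well ($T$ acts by $\chi_{p}(0)=1$ and $S$ by $\overline{\gamma(f_{p})}$ with $f_{p}$ trivial). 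The differences: for $(i)\Rightarrow(ii)$ you invoke the structure theory over the PID $\mathbb{Z}_{p}$ --- $\Delta_{M_{p}}=1$ iff the Gram determinant is a unit iff $M_{p}$ is unimodular, whence evenness gives level $\mathbb{Z}_{p}$ --- whereas the paper sidesteps all local structure theory with a congruence trick: pick $k$ with $p^{k}\equiv1\ (\mathrm{mod\ }\Delta_{M})$, so $p^{k}\gamma\equiv\gamma\ (\mathrm{mod\ }M)$, and evenness of $M$ then yields $(p^{2k}-1)\gamma^{2}/2\in\mathbb{Z}_{p}$ with $p^{2k}-1\in\mathbb{Z}_{p}^{*}$. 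Both arguments use evenness at $p=2$ in an essential way, as you correctly flag (an odd unimodular $M_{2}$ has $\Delta_{M_{2}}=1$ but level $2$, which is why the lemma fails for odd lattices). For $(iii)\Rightarrow(ii)$ the paper reads the level directly off the $T$-action: since $\ker\chi_{p}=\mathbb{Z}_{p}$, triviality of $\rho_{M_{p}}(T)$ alone forces $\gamma_{p}^{2}/2\in\mathbb{Z}_{p}$ for every $\gamma_{p} \in M_{p}^{*}$, giving $(ii)$ in one line with no dependence on the other implications. Your route instead uses $S$: every matrix entry of $\rho_{M_{p}}(S)$ has modulus $\Delta_{M_{p}}^{-1/2}<1$ when $D_{M_{p}}$ is nontrivial, so $\rho_{M_{p}}(S)\neq\mathrm{Id}$; this is a clean and correct unitarity argument, but it establishes not-$(i)\Rightarrow$ not-$(iii)$ and therefore leans on your previously proved $(i)\Leftrightarrow(ii)$ to close the cycle. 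In short, the paper's version is the more economical and self-contained at these two steps, while yours is arguably the more transparent, at the cost of invoking the Gram-determinant characterization of unimodularity that the paper deliberately avoids.
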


\begin{proof}
As $\ker\chi_{p}=\mathbb{Z}_{p}$, the triviality of $\rho_{M_{p}}(T)$ is
equivalent to $N$ being in $\mathbb{Z}_{p}^{*}$, and by non-degeneracy also to
$\Delta_{M_{p}}$ being equal to 1. Since Theorem 5 of \cite{[W]} or Theorem 3 of
\cite{[C]} show that if $\Delta_{M_{p}}=1$ then $\gamma(f_{p})=1$, this
completes the proof of the lemma.
\end{proof}

Lemma \ref{pfin} implies that $\bigotimes_{p}\rho_{M_{p}}$ is well-defined. In
fact, more is true:
\begin{prop}
We have $\rho_{M}=\bigotimes_{p}\rho_{M_{p}}$ as representations of
$Mp_{2}(\mathbb{Z})$. \label{rhoMdecom}
\end{prop}

\begin{proof}
First observe that $V_{\rho_{M}}=\bigotimes_{p}V_{\rho_{M_{p}}}$ (since
$D_{M}=\bigoplus_{p}D_{M_{p}}$), so that $\rho_{M}$ and
$\bigotimes_{p}\rho_{M_{p}}$ act on the same space. Now, as $\gamma \in D_{M}$
equals $\sum_{p}\gamma_{p}$ with $\gamma_{p}$ being the image of $\gamma$ in
$D_{M_{p}}$, we may use the $p$-adic decomposition of $\mathbb{Q}/\mathbb{Z}$
and obtain that $(\gamma,\delta)=\sum_{p}(\gamma_{p},\delta_{p})$ for any
$\gamma$ and $\delta$ in $D_{M}$. Thus the decomposition
$D_{M}=\bigoplus_{p}D_{M_{p}}$ is an \emph{orthogonal} decomposition. As similar
considerations yield also
$\frac{\gamma^{2}}{2}=\sum_{p}\frac{\gamma_{p}^{2}}{2}$
and $f(\gamma)=\prod_{p}f_{p}(\gamma_{p})$, the assertion follows from
Proposition \ref{prod} since the tensor product is essentially finite, i.e.,
only finitely many representations involved in the product are non-trivial. This
proves the proposition.
\end{proof}

\medskip

For any prime $p$, consider now the locally compact group $M_{\mathbb{Q}_{p}}$,
with the character of second degree $f_{\mathbb{Q}_{p}}$ which is the
composition of the quadratic form with $\chi_{p}$. We have
$\gamma(f_{\mathbb{Q}_{p}})=\gamma(f_{p})$ by Section 27 of \cite{[W]} or
Theorem 3 of \cite{[C]}. Since all these $p$-adic vector spaces, as well as the
real vector space $M_{\mathbb{R}}$, arise from one rational vector space
$M_{\mathbb{Q}}$, the Weil indices $\gamma(f_{\mathbb{Q}_{p}})$ and
$\gamma(f_{\mathbb{R}})$ are related through the Weil reciprocity law (for the
global field $\mathbb{Q}$). Its classical formulation is
$\prod_{p\leq\infty}\gamma(f_{\mathbb{Q}_{p}})=1$, with
$\mathbb{Q}_{\infty}=\mathbb{R}$, under the normalization in which
$\prod_{p\leq\infty}\chi_{p}(x)=1$ for every $x\in\mathbb{Q}$. In our
normalization of the characters, in which the character on $\mathbb{R}$ is
$x\mapsto\mathbf{e}(x)$ and coincides with $\prod_{p<\infty}\chi_{p}(x)$ for
$x\in\mathbb{Q}$, the Weil reciprocity law takes the form
$\prod_{p}\gamma(f_{\mathbb{Q}_{p}})=\gamma(f_{\mathbb{R}})=\zeta_{8}^{sgn(M)}$
(the latter equality follows from the evaluations in Section 26 of \cite{[W]}).
Note that the Weil reciprocity law also follows from comparing the global
coefficients in the equality $\rho_{M}(S)=\bigotimes_{p}\rho_{M_{p}}(S)$. Now,
applying the process described in Section \ref{LatWRep} to
$G=M_{\mathbb{Q}_{p}}$ yields an action of an $S^{1}$-cover of the symplectic
group $Sp(M_{\mathbb{Q}_{p}})$ on $L^{2}(M_{\mathbb{Q}_{p}})$ and on the dense
subspace $\mathcal{S}(M_{\mathbb{Q}_{p}})$. Moreover, Section 35 of \cite{[W]}
shows that this representation is continuous in the strong topology on the group
of unitary operators on $L^{2}(M_{\mathbb{Q}_{p}})$, and can be restricted to a
representation of a double cover $Mp(M_{\mathbb{Q}_{p}})$ of
$Sp(M_{\mathbb{Q}_{p}})$. We will be interested in the restriction of the
latter representation to a double cover $Mp_{2}(\mathbb{Q}_{p})$ of
$SL_{2}(\mathbb{Q}_{p})$, and further to a double cover $Mp_{2}(\mathbb{Z}_{p})$
of $SL_{2}(\mathbb{Z}_{p})$. The representation obtained in Section
\ref{LatWRep} for this case, which takes $T$ to $T_{f_{\mathbb{Q}_{p}}}$ and $S$
to $S_{f_{\mathbb{Q}_{p}}}$, is just the restriction of this representation even
further, to $Mp_{2}(\mathbb{Z})$.

The space $\mathcal{S}(M_{\mathbb{Q}_{p}})$ consists of those functions on
$M_{\mathbb{Q}_{p}}$ whose support is contained in some finite
$\mathbb{Z}_{p}$-submodule of $M_{\mathbb{Q}_{p}}$ and which are constant on
cosets of a (smaller) $\mathbb{Z}_{p}$-submodule of $M_{\mathbb{Q}_{p}}$. The
space $V_{\rho_{M_{p}}}$ is naturally isomorphic to the subspace of
$\mathcal{S}(M_{\mathbb{Q}_{p}})$ consisting of those functions which are
supported in $M_{p}^{*}$ and are constant on cosets of $M_{p}$, by identifying
the canonical basis element $e_{\gamma_{p}}$ of $V_{\rho_{M_{p}}}$ with the
characteristic function $E_{M_{p}+\gamma_{p}}$ of the coset $M_{p}+\gamma_{p}$
in $M_{\mathbb{Q}_{p}}$. The crucial point in our method is the following
\begin{obs}
The restriction of the Weil representation arising from $M_{\mathbb{Q}_{p}}$ to
$Mp_{2}(\mathbb{Z})$ preserves this subspace of
$\mathcal{S}(M_{\mathbb{Q}_{p}})$. Moreover, the representation of
$Mp_{2}(\mathbb{Z})$ thus obtained becomes, under our identifications,
precisely $\rho_{M_{p}}$. \label{Zpext}
\end{obs}
Observation \ref{Zpext} follows from Corollary \ref{r0ACDM} below (in which we
take $\mathbb{F}=\mathbb{Q}_{p}$ and $\lambda=\chi_{p}$). However, we remark
that one may prove Observation \ref{Zpext} directly, and indicate how one
applies it to get a simple proof of the fact that $\rho_{M}$ factors through a
double cover of $SL_{2}(\mathbb{Z}/N\mathbb{Z})$. Indeed, Observation
\ref{Zpext} is established by comparing the action of the generators
$T_{f_{\mathbb{Q}_{p}}}$ and $S_{f_{\mathbb{Q}_{p}}}$ with that of $T_{f_{p}}$
and $S_{f_{p}}$
respectively. We then prove the following

\begin{lem}
For every prime number $p$, the minimal closed normal subgroup of
$SL_{2}(\mathbb{Z}_{p})$ containing $T^{N}$ is the group
$\Gamma(N,\mathbb{Z}_{p})$ consisting of those matrices in
$SL_{2}(\mathbb{Z}_{p})$ which are congruent to the identity matrix modulo $N$.
\label{normN}
\end{lem}

\begin{proof}
If $\Gamma$ denotes the minimal normal closed subgroup in question then we
clearly have $\Gamma\leq\Gamma(N,\mathbb{Z}_{p})$ since
$T^{N}\in\Gamma(N,\mathbb{Z}_{p})$. For the reverse inclusion first note that
$T^{r}=\binom{1\ \ r}{0\ \ 1}$ as well as the conjugate $\binom{1\ \ 0}{r\ \ 1}$
of its inverse lie in $\Gamma$ for every $r \in N\mathbb{Z}$, and continuity
allows us to extend the latter assertion to $r \in N\mathbb{Z}_{p}$. Let
$\widetilde{\Gamma}$ be the subgroup of $SL_{2}(\mathbb{Z}_{p})$ consisting of
those matrices $\binom{a\ \ b}{c\ \ d} \in SL_{2}(\mathbb{Z}_{p})$ such that $b
\equiv c\equiv0(\mathrm{mod\ }N)$ and $a \equiv d\equiv1(\mathrm{mod\ }N^{2})$.
Any element of $\binom{a\ \ b}{c\ \ d}\in\widetilde{\Gamma}$ in which
$d\in\mathbb{Z}_{p}^{*}$ (the latter condition is redundant if $p|N$, but not
otherwise) may be now written as  \[\left(\begin{array}{cc} a & b \\ c &
d\end{array}\right)=\left(\begin{array}{cc} 1 & \frac{b-N}{d} \\ 0 &
1\end{array}\right)\left(\begin{array}{cc} 1 & 0 \\ \frac{d-1}{N} &
1\end{array}\right)\left(\begin{array}{cc} 1 & N \\ 0 &
1\end{array}\right)\left(\begin{array}{cc} 1 & 0 \\ \frac{1-d+Nc}{Nd} &
1\end{array}\right)\] (recall that $a=\frac{1+bc}{d}$ in
$SL_{2}(\mathbb{Z}_{p})$), proving that it lies in $\Gamma$. This completes the
proof for the case where $p$ does not divide $N$ (hence
$\widetilde{\Gamma}$ and $\Gamma(N,\mathbb{Z}_{p})$ are both the full group
$SL_{2}(\mathbb{Z}_{p})$), since any matrix in $SL_{2}(\mathbb{Z}_{p})$ is the
product of at most two matrices with invertible lower right entry. Assuming now
$p|N$, we now observe that the matrix $\binom{1+kN\ \ \ \ kN\ \ }{\ -kN\ \ \
1-kN}$ lies in $\Gamma$ for every $k\in\mathbb{Z}_{p}$, as the conjugate of
$\binom{\ 1\ \ \ 0}{kN\ \ 1}$ by $T$. A general element $\binom{a\ \ b}{c\ \
d}\in\Gamma(N,\mathbb{Z}_{p})$ can now be written as \[\left(\begin{array}{cc}
a-kN(a+b) & b+kN(a+b) \\ c-kN(c+d) &
d+kN(c+d)\end{array}\right)\left(\begin{array}{cc} 1+kN & -kN \\ kN &
1-kN\end{array}\right),\] and if we choose $k$ such that
$k\equiv\frac{a-1}{N}\equiv-\frac{d-1}{N}(\mathrm{mod\ }N)$ (the numbers
$\frac{a-1}{N}$ and $-\frac{d-1}{N}$ are congruent modulo $N$ by the
$\Gamma(N,\mathbb{Z}_{p})$ condition) then both factors were seen to lie in
$\Gamma$. This completes the proof of the lemma.
\end{proof}

Lemma \ref{normN} and the triviality of $\rho_{M_{p}}(T^{N})$ now imply the
triviality of $\rho_{M_{p}}$ on all of $\Gamma(N,\mathbb{Z}_{p})$ for any odd
$p$, and the triviality of at least a double cover of
$\Gamma(N,\mathbb{Z}_{2})$ (this is so, since $\rho_{M_{p}}$ factors through
$SL_{2}(\mathbb{Z}_{p})$ for odd $p$ but not necessarily for $p=2$). Since being
in $\Gamma(N)$ is a local property, Lemma \ref{rhoMdecom} completes the
verification of the factoring assertion. However, we shall not use this
assertion in what follows, but rather obtain it again as a special case of the
general formulae.

\section{$p$-adic Lattices and their Gauss Sums \label{GausspLat}}

Many roots of unity which we shall later encounter will be expressed in terms of
Weil indices of $p$-adic Jordan components. In this Section we thus skim through
Jordan decompositions of (even) $p$-adic lattices, together with the
corresponding Gauss sums and Weil indices. Our treatment is related to the
discussion in \cite{[Sche]} and \cite{[Str]} about discriminant forms, the main
difference being the fact that lattices may have unimodular parts (i.e., Jordan
components of the sort $1^{\kappa n}$ or $1^{\kappa n}_{II}$---see the
definitions below), which are no longer visible in their discriminant forms.

\medskip

We begin with some notation. For any odd $K$ we define
$\varepsilon_{K}\in\{1,i\}$ and
$\varepsilon(K)\in\mathbb{F}_{2}=\mathbb{Z}/2\mathbb{Z}$ to be such that
$\varepsilon_{K}^{2}=(-1)^{\varepsilon(K)}=(-1)^{(K-1)/2}$. In addition, define
$\sigma(x)\in\mathbb{F}_{2}$ for non-zero $x$ in $\mathbb{Q}$ (or in
$\mathbb{R}$) such that $sgn(x)=(-1)^{\sigma(x)}$. We extend the Legendre symbol
$\big(\frac{x}{y}\big)$ also for negative odd $y$ by defining
$\big(\frac{x}{y}\big)=\big(\frac{x}{|y|}\big)$. We remark that this is
different from the Kronecker extension used in \cite{[Str]}, \cite{[B3]}, and
\cite{[Sn]}, which in our notation is given by
$\big(\frac{x}{|y|}\big)(-1)^{\sigma(x)\sigma(y)}$. The advantage of our
extension is that $\big(\frac{x}{y}\big)$ depends only on the value of $x$
modulo $y$ also for negative $y$. We also define $\big(\frac{0}{\pm1}\big)$ to
be 1 (in order to preserve the latter property for $y=\pm1$). Moreover, our
convention extends further to the \emph{quadratic power residue symbol} defined
over more general number fields in page 24 of \cite{[Ge]}. Both extensions are
multiplicative in $x$ and in $y$, and in both extensions the quadratic
reciprocity law extend to the formula
$\big(\frac{x}{y}\big)\big(\frac{y}{x}\big)=(-1)^{
\varepsilon(x)\varepsilon(y)+\sigma(x)\sigma(y)}$, holding for every odd $x$ and
$y$ which are coprime. For $x=-1$ we get
$\big(\frac{-1}{y}\big)=(-1)^{\varepsilon(y)+\sigma(y)}$ (in comparison to the
equality $\big(\frac{-1}{y}\big)=(-1)^{\varepsilon(y)}$ holding also for
negative $y$ in the extension from the other references). Note, in relation with
Section \ref{Meta}, that $(-1)^{\sigma(x)\sigma(y)}$ is the Hilbert symbol
$(x,y)_{\mathbb{R}}$.

Let $k\geq0$. The fact that $\big(\frac{2^{k}}{y}\big)$ is defined by the
residue of $y$ modulo 8 (and then $\big(\frac{2^{k}}{y}\big)$ is symmetric in
the sign of $y$) and $\big(\frac{x}{p^{k}}\big)$ (with $p$ odd) is defined by
the residue of $x$ modulo $p$ allows us to extend these particular cases of the
Legendre symbol to $y\in\mathbb{Z}_{2}^{*}$ and $x\in\mathbb{Z}_{p}$
respectively. For $k\geq1$ the latter vanishes for $x \in p\mathbb{Z}_{p}$,
while for $k=0$ it equals 1 for any $x\in\mathbb{Z}_{p}$, invertible or not. We
shall also make use of the following formula, which holds for any odd number
$x$:
\begin{equation}
\bigg(\frac{2}{x}\bigg)\varepsilon_{x}=\zeta_{8}^{1-x}. \label{oddmod8}
\end{equation}
This formula appears as Eq. (5.6) of \cite{[Str]}, and we extend it by
continuity to $x\in\mathbb{Z}_{2}^{*}$. The proof is obtained by checking the 4
possibilities of $x$ modulo 8.

\medskip

It is well-known that any $p$-adic lattice $M$ is isomorphic to an orthogonal
direct sum $\bigoplus_{e=0}^{k}M_{e}(p^{e})$, with $M_{e}$ unimodular for any
$e$ (see, e.g., Proposition 2.6 of \cite{[Z1]} for a much more general
statement, as well as the books and articles cited in that reference). The
sublattices $M_{e}(p^{e})$ are called \emph{Jordan components}, and they are
represented by symbols of the form $q^{\kappa n}$ if $p\neq2$ and $q^{\kappa
n}_{t}$ or $q^{\kappa n}_{II}$ (the latter appears only with even $n$) if $p=2$.
Here $n\in\mathbb{N}$ and $\kappa\in\{\pm\}$, and for $p=2$ the index $t$ lies
in $\mathbb{Z}/8\mathbb{Z}$. Such a symbol with $q=1$ stands for a unimodular
lattice, whose rank is $n$ and whose discriminant satisfies
$\big(\frac{disc(M_{e})}{p}\big)=\kappa$ for odd $p$ and
$\big(\frac{2}{disc(M_{e})}\big)=\kappa$ for $p=2$. These invariants
characterize the unimodular $p$-adic lattice if $p$ is odd. For $p=2$ we
distinguish among even unimodular lattices (which correspond to the subscript
$II$ and are again characterized by $n$ and $\kappa$), and odd unimodular
lattices, for which $t$ is the trace of a diagonal form of $M_{e}$ in
$\mathbb{Z}_{2}/8\mathbb{Z}_{2}=\mathbb{Z}/8\mathbb{Z}$ (this can be seen to be
independent of the diagonal form chosen, and characterize the unimodular lattice
together with $n$ and $\kappa$). For general $q=p^{e}$, the symbols
$q^{\kappa n}$, $q^{\kappa n}_{t}$, and $q^{\kappa n}_{II}$ represent the
lattices obtained by multiplying the bilinear form on $1^{\kappa n}$,
$1^{\kappa n}_{t}$, and $1^{\kappa n}_{II}$ respectively by $q$. If $p=2$ then
the index $t$ must be of the same parity as $n$, and for small values of $n$ not
all the combinations of $t \equiv n(\mathrm{mod\ }2)$ and $\kappa$ can appear:
For $n=1$ we know that $t=\pm1$ implies $\kappa=+$ while $t=\pm5$ implies
$\kappa=-$, while for $n=2$ we have that $t=0$ implies $\kappa=+$ while $t=4$
implies $\kappa=-$. The trivial component, with $n=0$, will always be assumed to
have $\kappa=+$, and index $II$ if $p=2$. For odd $p$ this decomposition is
unique in the sense that direct sums with different invariants are never
isomorphic (this has been shown by many authors; for a recent generalization to
lattices over complete valuation rings of arbitrary rank see \cite{[Z1]}). For
$p=2$ different decomposed forms may give isomorphic 2-adic lattices, but it is
known precisely when this happens (see \cite{[J]}, with some remarks in
\cite{[Z1]}). A $p$-adic lattice is even wherever $p$ is odd or $p=2$ and
$M_{0}$ is of the form $1^{\kappa n}_{II}$ with even $n$.

Any decomposition of $M$ as $\bigoplus_{e=0}^{k}M_{e}(p^{e})$ with $M_{e}$
unimodular for every $e$ is called a \emph{Jordan decomposition}, and the
sublattices $M_{e}(p^{e})$ (or equivalently $q^{\kappa n}$, $q^{\kappa n}_{t}$,
or $q^{\kappa n}_{II}$) are called the \emph{components} of the decomposition,
or, more abstractly, \emph{Jordan components}. In the direct sum of two Jordan
components with the same $q$, the ranks are added and the signs are multiplied.
For $p=2$ the index $t$ is added, $II$ is considered to be 0 when added to some
$t$, and the sum of two $II$ indices remains $II$. A Jordan component is
\emph{indecomposable} if it cannot be presented as the orthogonal direct sum of
smaller $p$-lattices. This is the case only for $q^{\kappa1}$ if $p\neq2$ and
$q^{\kappa1}_{t}$ and $q^{\kappa2}_{II}$ if $p=2$.

\smallskip

We have seen above that if $M$ is a $p$-adic lattice and the characters of
second degree $f$ on $D_{M}$ and $f_{\mathbb{Q}_{p}}$ on $M_{\mathbb{Q}_{p}}$
are defined through composition with $\chi_{p}$ then
$\gamma(f)=\gamma(f_{\mathbb{Q}_{p}})$. We call this common root of unity the
\emph{Weil index of $M$}, and denote it by $\gamma(M)$. It coincides also with
the root of unity denoted $\gamma_{p}$ in \cite{[Sche]} and \cite{[Str]}, which
is given in terms of the elements of $\mathbb{Z}/8\mathbb{Z}$ called $p$-excess
and oddity (or signature) in these references. It is evaluated in the following
\begin{prop}
The Weil index of a $p$-adic Jordan component $q^{\kappa n}$ with odd $p$ is
$\kappa^{v_{p}(q)}\zeta_{8}^{n(1-q)}$. The Weil index of a 2-adic Jordan
component $q^{\kappa n}_{t/II}$ is $\kappa^{v_{2}(q)}\zeta_{8}^{t}$, where for
the index $II$ we take $t=0$. Moreover, for any even $p$-adic lattice $M$ the
equality $\sum_{\eta \in
M^{*}/M}\chi_{p}\big(\frac{\eta^{2}}{2}\big)=\gamma(M)\sqrt{\Delta_{M}}$ holds.
\label{gammap}
\end{prop}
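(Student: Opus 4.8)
The plan is to reduce the general statement to the case of indecomposable Jordan components, prove the Weil-index formula for those, and then deduce the Gauss-sum identity as a consequence.

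First I would establish the multiplicativity that makes the reduction possible. Since the Weil index $\gamma(f)$ is defined via the theory of \cite{[W]} and both $\gamma(M)$ and the normalized Gauss sum behave well under orthogonal direct sums, I would record that for an orthogonal direct sum $M=M'\oplus M''$ one has $\gamma(M)=\gamma(M')\gamma(M'')$ (this is the additivity of $p$-excess and oddity already noted in the text, transported to the multiplicative side via $\zeta_8$) and simultaneously that the Gauss sum factors, namely $\sum_{\eta\in M^*/M}\chi_p(\eta^2/2)=\bigl(\sum_{\eta'}\chi_p(\eta'^2/2)\bigr)\bigl(\sum_{\eta''}\chi_p(\eta''^2/2)\bigr)$ and $\Delta_M=\Delta_{M'}\Delta_{M''}$. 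Granting multiplicativity, it suffices to verify both assertions of the proposition on the indecomposable components, which by the previous subsection are exactly $q^{\varepsilon 1}$ for odd $p$ and $q^{\varepsilon 1}_t$ and $q^{\varepsilon 2}_{II}$ for $p=2$.

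Next I would carry out the explicit computation on each indecomposable piece. For odd $p$ and a component $q^{\varepsilon 1}$ with $q=p^e$, the discriminant group $M^*/M$ is cyclic of order $q$, and the sum $\sum_{\eta}\chi_p(\eta^2/2)$ is a classical quadratic Gauss sum over $\mathbb{Z}/q\mathbb{Z}$; I would evaluate it using the standard formula, expressing the answer through the Legendre symbol $\bigl(\frac{\cdot}{p}\bigr)$, the constant $\varepsilon_p$, and a power of $\zeta_8$, then match it against $\varepsilon^{v_p(q)}\zeta_8^{n(1-q)}\sqrt{\Delta_M}$; here the identity \eqref{oddmod8} and quadratic reciprocity translate the Gauss-sum value into the claimed closed form, and the sign $\varepsilon=\bigl(\frac{disc(M_e)}{p}\bigr)$ enters exactly through the Legendre symbol. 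For $p=2$ I would treat the one-dimensional components $q^{\varepsilon 1}_t$ by the same kind of elementary Gauss sum over $\mathbb{Z}/q\mathbb{Z}$ (with the oddity $t$ recording the diagonal entry modulo $8$, so that $\zeta_8^t$ is precisely the argument of the sum), and the binary even component $q^{\varepsilon 2}_{II}$ by direct evaluation of the rank-two sum, which yields oddity $0$ and sign $\varepsilon^{v_2(q)}$ as asserted.

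Finally, the third (Gauss-sum) assertion of the proposition follows for a general even $p$-adic lattice once the component-wise identities are known: multiplying the verified identities across a Jordan decomposition and using $\Delta_M=\prod\Delta_{M_e}$ together with the multiplicativity of $\gamma(M)$ gives $\sum_{\eta\in M^*/M}\chi_p(\eta^2/2)=\gamma(M)\sqrt{\Delta_M}$ in general. I expect the main obstacle to lie in the explicit $p=2$ evaluations: keeping track of the $\zeta_8$-powers and the signs simultaneously is delicate, because the oddity $t$, the sign $\varepsilon=\bigl(\frac{2}{disc}\bigr)$, and the factor $\varepsilon_K$ interact through \eqref{oddmod8} and through the quadratic reciprocity law in the precise normalization fixed in the previous subsection. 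The odd-$p$ case is the cleaner classical Gauss sum, whereas the binary $II$ component and the interplay of $t\equiv n\pmod 2$ for $p=2$ require care to see that all $\zeta_8$-contributions assemble into the single exponent $t$ (with $t=0$ in the $II$ case).
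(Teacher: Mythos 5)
Your proposal is correct in outline and shares the paper's skeleton (multiplicativity with respect to orthogonal direct sums, hence reduction to the indecomposable components $q^{\varepsilon 1}$, $q^{\varepsilon 1}_{t}$, $q^{\varepsilon 2}_{II}$), but it diverges from the paper at the key step of handling the exponent $e$ in $q=p^{e}$. You propose to evaluate the classical quadratic Gauss sums modulo $p^{e}$ (resp.\ modulo $2^{e+1}$, and a binary sum over $(\mathbb{Z}/q\mathbb{Z})^{2}$ for the $II$-components) directly from the standard closed formulae, for arbitrary $e$. The paper instead uses a $p$-adic version of Lemma 1 in Appendix 4 of \cite{[MH]}: passing from $M$ to $L=pM$ multiplies $q$ by $p^{2}$, and a short computation shows that the normalized Gauss sum and the asserted value of $\gamma$ are both unchanged under this operation; this reduces everything to $q\in\{1,p\}$ for odd $p$ and to $q\in\{1,2\}$ (even case) or $q\in\{2,4\}$ (odd case) for $p=2$, where the sums have at most $4$ terms and are checked by hand. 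What your route buys is self-containedness at the level of the statement (no rescaling lemma), at the price of invoking the general Gauss-sum evaluations, whose $2$-adic cases (and the anisotropic binary sum $\sum_{r,s}\mathbf{e}\big((r^{2}+rs+s^{2})/q\big)$ for general $q=2^{e}$) are exactly the delicate bookkeeping you flag; note also that your rank-one $2$-adic sum runs over only half a period of the modulus-$2q$ Gauss sum, a factor you must track to land on $\sqrt{\Delta_{M}}=2^{e/2}$. What the paper's route buys is that no Gauss-sum formula beyond the prime-modulus case of Gauss (plus Eq.\ \eqref{oddmod8}) is ever needed. One point you should make explicit rather than leave implicit: computing the Gauss sums proves, by itself, only that the sum equals the asserted root of unity times $\sqrt{\Delta_{M}}$; to conclude the first two assertions of Proposition \ref{gammap} about $\gamma(M)$ you need the fact from Section 27 of \cite{[W]} that the Weil index equals the Gauss sum divided by its absolute value, which is the bridge the paper states up front and which also delivers the verification that the absolute value is $\sqrt{\Delta_{M}}$.
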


For a proof see Proposition 3.1 of \cite{[Sche]} and the results of Section 3 of
\cite{[Str]}---note that the $\kappa$ factors account for the number $k$ (called
antisquare in \cite{[B3]}) which is defined in these references by
distinguishing different cases. An alternative, simpler proof can be given using
the multiplicativity of all quantities with respect to direct sums and a
$p$-adic analog of Lemma 1 in Appendix 4 of \cite{[MH]}. These two properties
reduce the proof to the verification of the assertions only for indecomposable
even Jordan components of low prime power (namely $1^{\kappa1}$ and
$p^{\kappa1}$ for odd $p$ as well as $1^{\kappa2}_{II}$, $2^{\kappa2}_{II}$,
$2^{\kappa1}_{t}$ and $4^{\kappa1}_{t}$ for $p=2$). The classical result of
Gauss and Eq. \eqref{oddmod8} complete the odd $p$ case, and for $p=2$ the
verification is direct and simple. The oddity formula in \cite{[Sche]} and
\cite{[Str]} is just an incarnation of the Weil reciprocity law.

\medskip

It turns out useful to compare $\gamma(M)$ with $\gamma\big(M(c)\big)$ for some
non-zero $c\in\mathbb{Z}_{p}$. It suffices to restrict our attention to the case
where $M$ is a Jordan component and $c$ is either a power of $p$ or an element
of $\mathbb{Z}_{p}^{*}$. The effect of the former is just changing the power of
$p$ in the symbol of the Jordan component, and Proposition \ref{gammap} implies
that the Weil index depends only on the parity of that power. Furthermore, for
$p=2$ we have $\gamma\big(M(2^{l})\big)=\eta^{l}\gamma(M)$ where $\eta$ is the
total sign of $M$ (i.e., the product of the signs of all the Jordan components).
In particular this term is just a sign. Note, however, that for
$p\equiv3(\mathrm{mod\ }4)$ multiplication by $p$ does not necessarily change
the Weil index only by a sign (if $p\equiv1(\mathrm{mod\ }4)$ then all the Weil
indices lie in $\pm1$).

The action of elements from $\mathbb{Z}_{p}^{*}$ is described in the following
\begin{lem}
Let $M$ be an even $p$-adic lattice and let $a\in\mathbb{Z}_{p}^{*}$. Then
$\gamma\big(M(a)\big)$ equals $\big(\frac{a}{\Delta_{M}}\big)\gamma(M)$ for odd
$p$ and equals $\big(\frac{\Delta_{M}}{a}\big)\gamma(M)^{a}$ for $p=2$.
\label{gammacomp}
\end{lem}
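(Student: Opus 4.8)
The plan is to reduce everything to indecomposable Jordan components, exactly as in the proof of Proposition \ref{gammap}, using multiplicativity of the Weil index with respect to orthogonal direct sums. Since $M(a)$ has Jordan decomposition $\bigoplus_{e}M_{e}(p^{e}a)$ and both $\big(\frac{a}{\Delta_{M}}\big)$ and $\big(\frac{\Delta_{M}}{a}\big)$ are multiplicative in $\Delta_{M}=\prod_{e}\Delta_{M_{e}(p^{e})}$, while $\gamma(M)$ and $\gamma(M)^{a}$ factor accordingly, it suffices to check the formula componentwise. For odd $p$ the only indecomposable component is $q^{\varepsilon 1}$; for $p=2$ they are $q^{\varepsilon 1}_{t}$ and $q^{\varepsilon 2}_{II}$. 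So the whole lemma comes down to a handful of explicit Gauss-sum computations of the same flavor as those already carried out in Proposition \ref{gammap}.

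For odd $p$, I would write the component as $q^{\varepsilon n}$ and track how multiplying the form by the unit $a$ changes the discriminant and the sign. The key observation is that the relevant sign $\varepsilon=\big(\frac{disc}{p}\big)$ gets multiplied by $\big(\frac{a^{n}}{p}\big)=\big(\frac{a}{p}\big)^{n}$, and the power $\varepsilon^{v_{p}(q)}$ in the formula of Proposition \ref{gammap} then produces an extra factor $\big(\frac{a}{p}\big)^{n\,v_{p}(q)}$. I expect this to match $\big(\frac{a}{\Delta_{M}}\big)$ once one notes $\Delta_{M}=q^{n}=p^{n\,v_{p}(q)}$ for such a component and invokes that $\big(\frac{a}{p^{m}}\big)=\big(\frac{a}{p}\big)^{m}$ by multiplicativity of the Legendre symbol in the denominator. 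The factor $\zeta_{8}^{n(1-q)}$ is unaffected since it depends only on $q$, not on $a$, completing the odd case.

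For $p=2$, the structure is different and this is where I expect the main obstacle. Here the formula of Proposition \ref{gammap} gives $\gamma=\varepsilon^{v_{2}(q)}\zeta_{8}^{t}$, so the dependence on the form is through both the sign $\varepsilon=\big(\frac{2}{disc}\big)$ and the oddity $t$. Multiplying the bilinear form by the unit $a$ changes $t$ (roughly rescaling the diagonal entries by $a$, so $t\mapsto$ something congruent to $at$ mod $8$ on the odd diagonal, which is why the answer involves $\gamma(M)^{a}$), and simultaneously changes $\varepsilon$ through $\big(\frac{2}{disc}\big)\mapsto\big(\frac{2}{a^{n}disc}\big)$. The claim that the net effect is precisely $\big(\frac{\Delta_{M}}{a}\big)\gamma(M)^{a}$ will require carefully bookkeeping these two changes against each other and using the quadratic reciprocity law in the form stated earlier, together with Eq. \eqref{oddmod8} relating $\big(\frac{2}{x}\big)\varepsilon_{x}$ to $\zeta_{8}^{1-x}$. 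The subtlety is that $\gamma(M)^{a}$ only makes sense because $\gamma(M)$ is an $8$th root of unity and $a$ is well-defined in $\mathbb{Z}/8\mathbb{Z}$, as the statement itself notes; I would verify the two indecomposable $2$-adic components $q^{\varepsilon 1}_{t}$ and $q^{\varepsilon 2}_{II}$ directly, checking that raising $\zeta_{8}^{t}$ to the $a$ correctly produces the oddity of the rescaled lattice and that the leftover sign discrepancy is exactly $\big(\frac{\Delta_{M}}{a}\big)$.

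For the $2$-adic verification I would lean on the same explicit small Gauss sums computed in Proposition \ref{gammap}: for $2^{\varepsilon 1}_{t}$ one has $x^{2}=t/2$ and the sum $1+i^{t}$, and multiplying the form by $a$ replaces $t$ by $at$ (as a unit times $t$ mod $8$), so the Gauss sum becomes $1+i^{at}$; comparing $\gamma\big(M(a)\big)=\varepsilon(a)\zeta_{8}^{at}$ against $\big(\frac{\Delta_{M}}{a}\big)\gamma(M)^{a}=\big(\frac{2}{a}\big)\zeta_{8}^{at}$ reduces to the sign identity $\varepsilon(a)=\big(\frac{2}{a}\big)$ coming from $\big(\frac{2a}{2}\big)=\big(\frac{2}{a}\big)\cdot(\text{sign shift})$, which is where reciprocity enters. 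The even component $q^{\varepsilon 2}_{II}$ is handled analogously using $x^{2}=y^{2}\in\{0,\tfrac12\}$ and $(x,y)=\tfrac14$, tracking how scaling by $a$ permutes these values. Once both indecomposable cases check out, multiplicativity closes the argument.
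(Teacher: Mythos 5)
Your proposal is correct and is essentially the paper's own proof: reduce by multiplicativity to Jordan components, observe that scaling by $a$ takes $q^{\varepsilon n}$ to $q^{(\frac{a}{p})^{n}\varepsilon n}$ for odd $p$ and $q^{\varepsilon n}_{t/II}$ to $q^{(\frac{2}{a})^{n}\varepsilon n}_{at/II}$ for $p=2$, then substitute the formulae of Proposition \ref{gammap}, using $\big(\frac{a}{p}\big)^{nv_{p}(q)}=\big(\frac{a}{\Delta_{M}}\big)$, $\big(\frac{2}{a}\big)^{nv_{2}(q)}=\big(\frac{\Delta_{M}}{a}\big)$, $\zeta_{8}^{at}=(\zeta_{8}^{t})^{a}$, and $\varepsilon^{a}=\varepsilon$ for odd $a$. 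The only deviation is that your closing recomputation of small Gauss sums, and the anticipated appeals to quadratic reciprocity and Eq. \eqref{oddmod8}, are unnecessary: the symbolic bookkeeping of your third paragraph, fed into Proposition \ref{gammap} (whose proof already contains all the needed Gauss-sum evaluations), settles the $2$-adic case for all $q$ at once, which is exactly what the paper does.
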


The expression for $p=2$ is well-defined since the exponent $a$ is in fact considered as an element of $\mathbb{Z}_{2}/8\mathbb{Z}_{2}=\mathbb{Z}/8\mathbb{Z}$.

\begin{proof}
It suffices to verify the assertion for the Jordan components. Hence assume that
$M=M_{e}(q)$ with $M_{e}$ unimodular of rank $n$. Thus, $\Delta_{M}=q^{n}$, and
$M(a)=\big(M_{e}(a)\big)(q)$ with $M_{e}(a)$ unimodular. The discriminant of
$M_{e}(a)$ is $a^{n}$ times that of $M_{e}$, so that the sign is multiplied by
$\big(\frac{a}{p}\big)^{n}$ for odd $p$ and by $\big(\frac{2}{a}\big)^{n}$ for
$p=2$. For $p=2$ and odd $M_{e}$ the index $t$ is multiplied by $a$ when
replacing $M_{e}$ by $M_{e}(a)$ (verified using any diagonal form), while an
index $II$ remains unaffected. Hence we have $q^{\kappa n}(a) \cong
q^{(\frac{a}{p})^{n}\kappa n}$ for odd $p$ and $q^{\kappa n}_{t/II}(a) \cong
q^{(\frac{2}{a})^{n}\kappa n}_{at/II}$ if $p=2$. Now apply Proposition
\ref{gammap}, using the value of $\Delta_{M}$ and observing that if $p=2$ then
$a$ is odd and $(\pm1)^{a}=\pm1$. This proves the lemma.
\end{proof}

As already asserted in \cite{[W]}, we also have
$\gamma\big(M(-1)\big)=\overline{\gamma(M)}$ for every $p$-adic lattice $M$.
Combining this with the case $a=-1$ in Lemma \ref{gammacomp} implies that
$\gamma(M)^{2}=\big(\frac{-1}{\Delta_{M_{p}}}\big)$ for odd $p$. Lemma
\ref{gammacomp} and the last two assertions are useful when one wishes to
compare the results of this paper with those of \cite{[Sche]} and \cite{[Str]}.

\medskip

In Section \ref{Local} we shall encounter a Gauss sum, arising from a lattice
over the ring of integers in a local field, together with two coprime elements
$a$ and $c$ of that ring, with $c\neq0$. We shall now evaluate this Gauss sum in
the case where the ring is $\mathbb{Z}_{p}$. Let $M$ be an even $p$-adic lattice, and let $c\neq0$ and $a$ be elements of $\mathbb{Z}_{p}$ which are relatively prime. Let $\Delta_{M,c}$ be the cardinality of the kernel of multiplication by $c$ on $D_{M}$, and define a vector $\tilde{x}_{c} \in M^{*}$ as follows. If $p\neq2$ then $\tilde{x}_{c}=0$. For $p=2$ we choose a Jordan decomposition of $M$ and consider the lattice $M_{v_{2}(c)}$: If it comes with the index $II$ then again $\tilde{x}_{c}=0$. Otherwise, take an orthogonal
$\mathbb{Z}_{2}$-basis for it and let $\tilde{x}_{c}$ be the half the sum of
these basis vectors. Finally, define $a_{p}$ to be $a/p^{v_{p}(a)}$ if $a\neq0$.
Then we obtain
\begin{thm}
The Gauss sum $\sum_{\eta \in
M/cM}\chi_{p}\big(\frac{a}{c}\frac{\eta^{2}}{2}+a\frac{(\tilde{x}_{c},\eta)}{c}
\big)$ is well-defined and equals $p^{rk(M)v_{p}(c)/2}\sqrt{\Delta_{M,c}}\omega$
where $\omega=\prod_{pq|c}\gamma\big(q^{\kappa n}_{/t/II}(a_{p}c)\big)$ and
an empty product is defined (as always) to be 1. \label{M/cM}
\end{thm}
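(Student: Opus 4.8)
The plan is to prove well-definedness, reduce to a single Jordan component by multiplicativity, and then split into two regimes according to whether the scale $q=p^{e}$ of the component exceeds $v_{p}(c)$ or not. For well-definedness I would replace $\eta$ by $\eta+c\mu$ with $\mu\in M$: the argument of $\chi_{p}$ then changes by $a(\eta,\mu)+\tfrac{ac\mu^{2}}{2}+a(x_{c},\mu)$, and since $M$ is even, $x_{c}\in M^{*}$, and $a,c\in\mathbb{Z}_{p}$, each of these lies in $\mathbb{Z}_{p}=\ker\chi_{p}$, so the summand descends to $M/cM$. For the reduction I would write $M=\bigoplus_{e}M_{e}(p^{e})$ and observe that the quadratic term, the linear $x_{c}$-term (whose support for $p=2$ is the single component $M_{v_{2}(c)}(p^{v_{2}(c)})$), the quantities $\Delta_{M,c}$ and $p^{rk(M)v_{p}(c)/2}$, and the product $\delta$ are all multiplicative over orthogonal direct sums. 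Hence it suffices to treat one component $M=M_{e}(p^{e})$ with $M_{e}$ unimodular of rank $n$. Setting $m=v_{p}(c)$ and $c=p^{m}c_{p}$ with $c_{p}\in\mathbb{Z}_{p}^{*}$, I would record $cM=p^{m}M$, so $M/cM\cong(\mathbb{Z}/p^{m})^{n}$, $\Delta_{M,c}=p^{n\min(e,m)}$, and the component enters $\delta$ exactly when $e<m$ (i.e. $pq\mid c$).

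In the regime $e\ge m$ I expect the sum to collapse to $p^{nm}$. For odd $p$, $x_{c}=0$ and $\tfrac{a}{c}\tfrac{\eta^{2}}{2}=\tfrac{a}{c_{p}}p^{\,e-m}\tfrac{\nu}{2}$, with $\nu$ the value of the unimodular form, lies in $\mathbb{Z}_{p}$ for every $\eta$, so $\chi_{p}$ is trivial and the sum is $|M/cM|=p^{nm}$; for $p=2$ and $e>m$ the factor $2^{\,e-m}$ absorbs the $\tfrac12$, giving the same conclusion. The delicate sub-case is $p=2$, $e=m$ with $M_{e}$ odd, where $x_{c}$ is exactly what is needed: choosing an orthogonal basis $b_{i}$ with $b_{i}^{2}=2^{e}u_{i}$, $u_{i}\in\mathbb{Z}_{2}^{*}$, and $x_{c}=\tfrac12\sum_{i}b_{i}$, the argument becomes $\tfrac{a}{2c_{2}}\sum_{i}\eta_{i}(\eta_{i}+1)u_{i}$, which lies in $\mathbb{Z}_{2}$ because $\eta_{i}(\eta_{i}+1)$ is always even, so once more the sum equals $2^{nm}$. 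In all sub-cases this matches $p^{nm/2}\sqrt{\Delta_{M,c}}=p^{nm}$ with empty $\delta$-factor $1$.

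In the regime $e<m$ one has $x_{c}=0$ on this component and, by coprimality of $a$ and $c$, $a=a_{p}\in\mathbb{Z}_{p}^{*}$. Writing $s=m-e\ge1$ and $u=a_{p}/c_{p}$, the argument $\tfrac{u}{p^{s}}\tfrac{\eta^{2}}{2}$ depends only on $\eta$ modulo $p^{s}M_{e}$, and since each fibre of $M/p^{m}M\to M_{e}/p^{s}M_{e}$ has $p^{ne}$ elements, the sum equals $p^{ne}S$ with $S=\sum_{\eta\in M_{e}/p^{s}M_{e}}\chi_{p}\!\big(\tfrac{u}{p^{s}}\tfrac{\eta^{2}}{2}\big)$. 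The key identification is that $S$ is itself a classical Jordan-component Gauss sum: taking $P=M_{e}(u^{-1}p^{s})$, one checks that $P^{*}/P\cong(\mathbb{Z}/p^{s})^{n}$ and $q_{P}$ reproduces $\tfrac{u}{p^{s}}\tfrac{\eta^{2}}{2}$ (using that the unimodular $A^{-1}$ is $\mathbb{Z}_{p}$-congruent to $A$), so Proposition \ref{gammap} gives $S=\gamma(P)\sqrt{\Delta_{P}}=\gamma(P)\,p^{ns/2}$. The component therefore contributes $p^{ne}\gamma(P)p^{ns/2}=p^{n(m+e)/2}\gamma(P)$, and it remains to prove $\gamma(P)=\gamma\big(q^{\varepsilon n}(a_{p}c)\big)$.

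I expect the main obstacle to be this last matching, which is a square-class bookkeeping rather than a new idea. The component $q^{\varepsilon n}(a_{p}c)$ has scale $p^{m+e}$, which differs from $p^{s}=p^{m-e}$ by the even power $p^{2e}$; since scaling the form by an even power of $p$ leaves the Weil index unchanged, $\gamma\big(q^{\varepsilon n}(a_{p}c)\big)=\gamma\big(M_{e}(a_{p}c_{p})(p^{s})\big)$. Finally $u^{-1}=c_{p}/a_{p}$ and $a_{p}c_{p}$ differ by the square $a_{p}^{2}$, and by Lemma \ref{gammacomp} scaling a unimodular form by a square unit fixes $\gamma$: for odd $p$ the Legendre symbol is trivial on squares, while for $p=2$ one uses $\big(\tfrac{\Delta}{a_{p}^{2}}\big)=1$ together with $a_{p}^{2}\equiv1\ (\mathrm{mod}\ 8)$, so that $\gamma^{a_{p}^{2}}=\gamma$ and the oddity index $t\mapsto a_{p}^{2}t$ is unchanged modulo $8$. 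Hence $\gamma(P)=\gamma\big(M_{e}(a_{p}c_{p})(p^{s})\big)=\gamma\big(q^{\varepsilon n}(a_{p}c)\big)$, giving the component contribution $p^{n(m+e)/2}\gamma\big(q^{\varepsilon n}(a_{p}c)\big)=p^{nm/2}\sqrt{\Delta_{M,c}}\,\gamma\big(q^{\varepsilon n}(a_{p}c)\big)$. Reassembling the components by multiplicativity then yields the asserted formula.
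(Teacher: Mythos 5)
Your proof is correct and follows essentially the same route as the paper's: well-definedness by shifting $\eta$ by $c\mu$, reduction to a single Jordan component by multiplicativity, the trivial evaluation $p^{nv_{p}(c)}$ when $v_{p}(q) \geq v_{p}(c)$, the $x_{c}$-case killed summand-by-summand (your $\eta_{i}(\eta_{i}+1)$ identity performs in one step the paper's pairing of the quadratic and linear characters on $M/2M$), and, for $pq|c$, a reduction to the dual-coset Gauss sum of a rescaled lattice evaluated by Proposition \ref{gammap}, with the final matching of Weil indices by invariance under even powers of $p$ and unit squares exactly as in the paper (which works with $L=M\big(p^{v_{p}(c)}/q^{2}\big)$ and the remark that $\gamma$ depends only on $M_{\mathbb{Q}_{p}}$, rather than your $P=M_{e}(u^{-1}p^{s})$ and Lemma \ref{gammacomp}). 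The only pinhole is the sub-case $p=2$, $e=m$ with $M_{e}$ even (index $II$), which your absorption argument (stated for $e>m$) and your delicate sub-case (odd $M_{e}$) both skip; evenness of $M_{e}$ makes $\frac{\eta^{2}}{2q}$ integral, so the same one-line conclusion holds --- this is precisely the parenthetical the paper inserts in its second case.
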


The index $/t/II$ appearing in $\omega$ means no index for odd $p$ and means $t$
or $II$ according to what appears in that component for $p=2$. We remark that if
$a=0$ then the product defining $\omega$ is empty, avoiding the ambiguity of
$a_{p}$.

\begin{proof}
First, replacing a summand $\eta$ by $\eta+c\lambda$ with $\lambda \in M$
changes the argument of $\chi_{p}$ by an element of $\mathbb{Z}_{p}$. Hence each
summand is indeed well-defined. It again suffices to verify the remaining
assertions for each Jordan component. If $M$ is $q^{\varepsilon n}_{/t/II}$ then
$\Delta_{M,c}$ equals $q^{n}$ if $v_{p}(q) \leq v_{p}(c)$ and equals
$p^{v_{p}(c)n}$ if $v_{p}(q) \geq v_{p}(c)$. It turns out to be more convenient
to multiply $\eta$ by $c_{p}$ (possible since $c_{p}\in\mathbb{Z}_{p}^{*}$), so
that the summand corresponding to $\eta$ is
$\chi_{p}\big(ac_{p}\frac{\eta^{2}}{2p^{v_{p}(c)}}+a\frac{(\eta,\tilde{x}_{c})}{
p^{v_{p}(c)}}\big)$. We also recall that $q|\eta^{2}$ for every $\eta$ in the
Jordan component.

We distinguish among three different cases. The first case is where $p=2$,
$v_{2}(q)=v_{2}(c)$, and we have an index $t$ (the case where
$\tilde{x}_{c}\neq0$). The second case occurs whenever $v_{p}(q) \geq v_{p}(c)$
but excluding the situation covered in the first case. The third case is where
$pq|c$. In the second case the argument of $\chi_{p}$ is in $\mathbb{Z}_{p}$ for
every $\eta$, implying the assertion since $\Delta_{M,c}=p^{v_{p}(c)n}$ and
$\omega=1$ (this deals with the case where $c\in\mathbb{Z}_{p}^{*}$, and in
particular where $a=0$).

To prove the third case, note that the terms corresponding to the indices $\eta$
and $\eta+\frac{p^{v_{p}(c)}}{q}\lambda$ with $\lambda \in M$ have the same
contribution. As this class coincides with $\eta$ if and only if $\lambda \in
qM$ (so that the difference is in $cM=p^{v_{p}(c)}M$---multiplication by the
$p$-adic unit $c_{p}$ does not change $M$ as a module), and $M/qM$ has $q^{n}$
elements, our sum equals $q^{n}$ times the sum of the same expression over
$M\big/\frac{p^{v_{p}(c)}}{q}M$. Moreover, we have $a_{p}=a$ since $p|c$
hence $a\in\mathbb{Z}_{p}^{*}$. Now, as $M=q^{\kappa n}_{/t/II}$ and
$v_{p}(c)>v_{p}(q)$, multiplying the bilinear form in $M$ by
$\frac{p^{v_{p}(c)}}{q^{2}}$ still gives a even lattice, which we denote $L$ and
whose symbol is $\big(\frac{p^{v_{p}(c)}}{q}\big)^{\kappa n}_{/t/II}$. We claim
that the sum in question, namely $\sum_{\eta \in
M/\frac{p^{v_{p}(c)}}{q}M}\chi_{p}\big(a_{p}c_{p}\frac{\eta^{2}}{2p^{v_{p}(c)}}
\big)$, equals $\sum_{\rho \in
L^{*}/L}\chi_{p}\big(a_{p}c_{p}\frac{\rho^{2}}{2}\big)$. Indeed,
$L^{*}$ is $\frac{q}{p^{v_{p}(c)}}L$ (which is $M$ with the bilinear form
divided by $p^{v_{p}(c)}$) and $L=\frac{p^{v_{p}(c)}}{q}L^{*}$, so the two sums
indeed coincide. Proposition \ref{gammap} now shows that the latter sum equals
$\frac{p^{nv_{p}(c)/2}}{\sqrt{q^{n}}}\gamma\big(L(a_{p}c_{p})\big)$ (as
$\Delta_{L(a_{p}c_{p})}=\Delta_{L}=\frac{p^{nv_{p}(c)}}{q^{n}}$). As the
original sum was $q^{n}$ times the latter, $\Delta_{M,c}=q^{n}$, and
$L(a_{p}c_{p})$ has the same Weil index as $L(q^{2}a_{p}c_{p})=M(a_{p}c)$, this
proves the asserted result for this case.

It remains to consider the first case, which can occur only if $c$ and $q$ are
even. Hence $a$ is odd. Similarly to the third case, we can take out a factor
of $2^{n(v_{2}(c)-1)}$ and carry out the summation on $M/2M$. Given an
orthogonal $\mathbb{Z}_{2}$-basis for $M$ and an element $\eta \in M/2M$, we
find that $\chi_{2}\big(ac_{2}\frac{\eta^{2}}{2q}\big)$ is $-1$ raised to the
power which is the sum of the coefficients in the presentation of $\eta$ using
this basis. On the other hand,
$\chi_{2}\big(a\frac{(\eta,\tilde{x}_{c})}{q}\big)$ is seen to yield the same
value. The product of these two elements thus equals 1 for every $\eta$ in
$M/2M$, the Gauss sum is $2^{nv_{2}(c)}$, and as in the second case this is the
value we need.

This proves the theorem.
\end{proof}

\section{Metaplectic Groups over Local Fields \label{Meta}}

The operators appearing in the local Weil representations which we seek to
evaluate are not the ones denoted $\mathbf{r}_{0}$ in \cite{[W]}, but rather
their multiples by appropriate roots of unity. These roots of unity can be given
explicitly in terms of Weil indices of the quadratic form on the lattice and of
related quadratic forms. In this Section we thus construct the metaplectic cover
of $SL_{2}(\mathbb{F})$ for a local field $\mathbb{F}\neq\mathbb{C}$ of
characteristic $\neq2$ as acting on $\mathbb{F}$-lattices in these terms. We
then proceed to review the splitting of the cover over the ring of integers in
odd residue characteristics, and relate the real and 2-adic double covers of
$SL_{2}(\mathbb{Z})$ explicitly. These results yield the required roots of unity
mentioned above.

\medskip

Let $\mathbb{F}$ be a local field of characteristic different from 2 which is
not $\mathbb{C}$. We denote $(a,b)_{\mathbb{F}}$ the Hilbert symbol of the two
elements $a$ and $b$ of $\mathbb{F}^{*}$. It is symmetric, bi-multiplicative,
with values in $\{\pm1\}$, and is essentially defined on pairs of elements of
$\mathbb{F}^{*}/(\mathbb{F}^{*})^{2}$. The paper \cite{[Ku1]} constructs
non-trivial finite covers of $SL_{2}(\mathbb{F})$ for any local field
$\mathbb{F}\neq\mathbb{C}$ using norm residue symbols. In particular, the double
cover $Mp_{2}(\mathbb{F})$ of $SL_{2}(\mathbb{F})$ is given in terms of the
Hilbert symbol. An element of $Mp_{2}(\mathbb{F})$ can be realized by a pair
$(A,\theta)$ with $A \in SL_{2}(\mathbb{F})$ and $\theta\in\{\pm1\}$, and the
product is defined by \[(A,\theta)(B,\psi)=(AB,\sigma(A,B)\theta\psi)\] where
$\sigma(A,B)$ is the cocycle denoted in \cite{[Ku1]} by $a(\sigma,\tau)$. The
formula is
\begin{equation}
\sigma(A,B)=(x(A),x(B))_{\mathbb{F}}(x(AB),-x(B)/x(A))_{\mathbb{F}},
\label{cocycle}
\end{equation}
where the $x$-image of a matrix $\binom{a\ \ b}{c\ \ d} \in SL_{2}(\mathbb{F})$
is $c$ if $c\neq0$ and $d$ if $c=0$.

The group denoted $Mp_{2}(\mathbb{R})$ in Section \ref{LatWRep} is isomorphic to
the double cover $Mp_{2}(\mathbb{R})$ defined in \cite{[Ku1]}. Indeed,
identifying $(A,\theta)$ from \cite{[Ku1]} with the element
$\big(A,\theta\sqrt{j(A,\tau)}\big)$ from Section \ref{LatWRep}, where the
argument of $\sqrt{j(A,\tau)}$ is in $\big[-\frac{\pi}{2},\frac{\pi}{2}\big)$
defines such an isomorphism---see Theorem 4.1 of \cite{[Str]} for a proof (this
theorem considers only $A \in SL_{2}(\mathbb{Z})$, but the proof extends to all
of $SL_{2}(\mathbb{R})$).

\medskip

Choose a non-trivial character $\lambda$ on $(\mathbb{F},+)$. Any other choice
is $\lambda$ composed with multiplication by an element of $\mathbb{F}^{*}$. Let
$V$ be an $\mathbb{F}$-lattice. The map $\psi:V \to V^{*}$ induced by the
bilinear form is a symmetric isomorphism, acting from the right as in
\cite{[W]}. Composition with $\lambda$ defines an isomorphism from the dual
vector space $V^{*}$ to the Pontryagin dual $\widehat{V}$, and $\lambda_{*}\psi$
is a symmetric isomorphism in the terminology of \cite{[W]}. Normalize the Haar
measure on $V$ accordingly. Since $ch\mathbb{F}\neq2$, the bilinear form on $V$
corresponds to a (unique) quadratic form $q:x\mapsto\frac{x^{2}}{2}$, and
$f=\lambda \circ q$ is a (quadratic) non-degenerate character of second degree
which is associated to $\lambda_{*}\psi$. Applying the process presented in
Section \ref{LatWRep} to $G=V$, we find that $Sp(V)$ contains a copy of
$SL_{2}(\mathbb{F})$ rather than just $SL_{2}(\mathbb{Z})$. The group of unitary
operators on $L^{2}(V)$ (or on the dense subspace $\mathcal{S}(V)$) which is
denoted $Mp(V)$ in Section 34 of \cite{[W]} is an $S^{1}$-cover of the subgroup
of $Sp(V)$ in which the entries from $End(V)$ (as a locally compact group) are
$\mathbb{F}$-linear. This group contains a double cover of this symplectic group
over $\mathbb{F}$, which is described in detail in Section 5 of \cite{[Ra]},
using symplectic notation. The (Weil) representation associated to $V$ sends
$Mp_{2}(\mathbb{F})$ to elements of this double cover which lie over
$SL_{2}(\mathbb{F})$ (see Theorem \ref{Vrep} below).

Let $A=\binom{a\ \ b}{c\ \ d} \in Sp_{\mathbb{F}}(V \times V)$, and assume that
$a$, $b$, $c$, and $d$ are $\mathbb{F}$-linear endomorphisms of $V$ (acting from
the right). \cite{[W]} provides formulae for the lift of such elements into
$Mp(V)$ in some cases, namely Eq. (16) there for invertible $c$ and the
appropriate combination $\mathbf{t}_{0}(\tilde{f})\mathbf{d}_{0}(\alpha)$ for
$c=0$. These formulae are (in our terminology and normalization) as follows: If
$c=0$ then
\begin{equation}
\mathbf{r}_{0}(A)\Phi(x)=\sqrt{|\det
a|_{\mathbb{F}}}\Phi(xa)\lambda\bigg[\frac{(xa,xb)}{2}\bigg], \label{Wc=0}
\end{equation}
while if $c$ is invertible then Eq. (16) of \cite{[W]} (with $f$ being the
quadratic character associated to $A$) states that
\[\mathbf{r}_{0}(A)\Phi(x)=\sqrt{|\det
c|_{\mathbb{F}}}\int_{V}\Phi(xa+uc)\lambda\bigg[\frac{(xa,xb)}{2}+(uc,xb)+\frac{
(uc,ud)}{2}\bigg]du.\] Here $|\cdot|_{\mathbb{F}}$ is the normalized absolute
value of $\mathbb{F}$. Using the symplectic relation $b=ac^{-1}d-(c^{*})^{-1}$,
a change of variables sends the latter equation to
\begin{equation}
\mathbf{r}_{0}(A)\Phi(x)=\frac{1}{\sqrt{|\det
c|_{\mathbb{F}}}}\int_{V}\Phi(y)\lambda\bigg[\frac{(yc^{-1}d,y)}{2}-(yc^{-1},
x)+\frac{(xac^{-1},x)}{2}\bigg]dy. \label{Wcinv}
\end{equation}
These formulae suffice for evaluating $\mathbf{r}_{0}(A)$ wherever $A \in
SL_{2}(\mathbb{F})$.

$Mp_{2}(\mathbb{F})$ acts on $L^{2}(V)$ or $\mathcal{S}(V)$ through a
representation, which we denote $\rho_{V/\mathbb{F}}$, in which
$\rho_{V/\mathbb{F}}(A,\theta)$ is the appropriate constant multiple of
$\mathbf{r}_{0}(A)$ from Eqs. \eqref{Wc=0} and \eqref{Wcinv} (so that the image
of $\rho_{V/\mathbb{F}}$ is contained in $Mp(V)$). We wish to have an intrinsic
expression for these coefficients, using the quadratic form on $V$ (rather than
the symplectic notation of \cite{[Ra]}). For this we denote, by a slight abuse
of notation, the character of second degree $\lambda\circ(tq)$ (where the
quadratic form $q$ is multiplied by some $t\in\mathbb{F}^{*}$) simply by $tf$.
Denoting $\dim V$ by $m$, we then have
\begin{thm}
$\rho_{V/\mathbb{F}}(A,\theta)$ equals
$\theta^{m}\overline{\gamma(cf)}\mathbf{r}_{0}(A)$ if $c\neq0$ and
$\theta^{m}\overline{\gamma(af)}\gamma(f)\mathbf{r}_{0}(A)$ if $c=0$. The
representation $\rho_{V/\mathbb{F}}$ is faithful for odd $m$ and factors through
a faithful representation of $SL_{2}(\mathbb{F})$ if $m$ is even. \label{Vrep}
\end{thm}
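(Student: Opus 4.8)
The plan is to verify three things in turn: that $\rho_{V/\mathbb{F}}$ actually lands in the double cover $Mp_{\mathbb{F}}(V)\subset Mp(V)$, that it intertwines the Kubota product of $Mp_2(\mathbb{F})$, and that the kernel is as claimed. For the first point I would begin by reconciling the notation of Lemma \ref{Mpelts} with that of the statement. When $A\in SL_2(\mathbb{F})$ is viewed inside $Sp_{\mathbb{F}}(V\times V)$ (via Example 2.21 of \cite{[Ge]}) its entries act on $V$ as scalars, so $\psi^{-1}c$ is automatically symmetric and the form $q_c$ attached to $c^{-1}\psi$ is just $c^{-1}q$; hence $f_c=c^{-1}f$, and since $c^{-1}$ and $c$ differ by the square $c^{-2}$, square-invariance of the Weil index gives $\gamma(f_c)=\gamma(c^{-1}f)=\gamma(cf)$, and likewise $\gamma(f_a)=\gamma(af)$. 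With this identification Lemma \ref{Mpelts} says precisely that the two preimages of $A$ in $Mp_{\mathbb{F}}(V)$ are $\pm\overline{\gamma(cf)}\mathbf{r}_0(A)$ when $c\neq0$ and $\pm\gamma(f)\overline{\gamma(af)}\mathbf{r}_0(A)$ when $c=0$ (note that for $SL_2$ the scalar $c$ is either $0$ or invertible, so $\mathbf{r}_0(A)$ is always defined). Thus each $\rho_{V/\mathbb{F}}(A,\varepsilon)=\varepsilon^m(\cdots)\mathbf{r}_0(A)$ is one of these two preimages, lies in $Mp_{\mathbb{F}}(V)$, and projects under $\pi$ to $A$.

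For the homomorphism property, since $\rho_{V/\mathbb{F}}(A,\varepsilon)=\varepsilon^m\rho_{V/\mathbb{F}}(A,1)$ and $(\varepsilon\delta\sigma(A,B))^m=\varepsilon^m\delta^m\sigma(A,B)^m$, it suffices to prove $\rho_{V/\mathbb{F}}(A,1)\rho_{V/\mathbb{F}}(B,1)=\sigma(A,B)^m\rho_{V/\mathbb{F}}(AB,1)$. Writing $c_A,c_B,c_{AB}$ for the scalar coefficients in front of the respective $\mathbf{r}_0$'s and using $\mathbf{r}_0(A)\mathbf{r}_0(B)=W(A,B)\mathbf{r}_0(AB)$, this collapses to the scalar identity $c_Ac_BW(A,B)=\sigma(A,B)^mc_{AB}$, where $W(A,B)=1$ unless all three lower-left entries $c$, $g$, $ce+dg$ are invertible (the Leray invariant of Theorem 4.1 of \cite{[Ra]} being degenerate whenever one of them vanishes) and $W(A,B)$ equals the Weil index of the explicit character of second degree of Theorem 3 of \cite{[W]} in the remaining case. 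The main tool is the Weil cocycle $\gamma(xf)\gamma(yf)=(x,y)_{\mathbb{F}}^m\gamma(f)\gamma(xyf)$, which is a direct rearrangement of \eqref{Hilgamma}, together with $\gamma(-xf)=\overline{\gamma(xf)}$, square-invariance, and the square formula of Eq. (28) of \cite{[W]} combined with $\gamma(x^2)^4=(-1,-1)_{\mathbb{F}}$. I would then run through the five cases of \eqref{cocycle}. In the first three cases one of $c_A,c_B$ has the $c=0$ shape, and after substitution the required identity becomes literally \eqref{Hilgamma} with the appropriate pair of Hilbert-symbol arguments. In the fourth case ($c\neq0\neq g$ but $ce+dg=0$) I would use $ad-bc=1$ to compute the upper-left entry of $AB$ as $-g/c$, so that $c_{AB}=\gamma(f)\gamma(cgf)$, and then the Weil cocycle together with $\gamma(x^2)^4=(-1,-1)_{\mathbb{F}}$ reduces both sides to $(-c,-g)_{\mathbb{F}}^m=[(-1,-1)(cg,-1)]_{\mathbb{F}}^m$.

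The hard part is the fifth case, where all three lower-left entries are invertible and $W(A,B)$ is a genuine Weil-index (Maslov/Leray) factor rather than $1$. Here I would compute the symmetric map of Theorem 3 of \cite{[W]} for these scalar matrices, identify the resulting character of second degree as an explicit rescaling of $f$ (involving $c$, $g$, and $ce+dg$), and then verify $c_Ac_BW(A,B)=[(c,g)(ce+dg,-cg)]_{\mathbb{F}}^mc_{AB}$ by the same combination of the Weil cocycle and the auxiliary identities; this is the one place where the Leray invariant genuinely interacts with Kubota's cocycle, and pinning down this sign is the crux of the argument. A useful sanity check throughout is that, since each $\rho_{V/\mathbb{F}}(A,1)$ already lies in the double cover $Mp_{\mathbb{F}}(V)$, the product $\rho_{V/\mathbb{F}}(A,1)\rho_{V/\mathbb{F}}(B,1)$ automatically equals $\pm\rho_{V/\mathbb{F}}(AB,1)$, so the entire computation is a refinement of a known sign to the exact value $\sigma(A,B)^m$.

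Finally, the faithfulness and factoring claims are a short argument. By construction $\pi\circ\rho_{V/\mathbb{F}}$ is the covering map $Mp_2(\mathbb{F})\to SL_2(\mathbb{F})$, whose kernel is the center $\{\pm1\}=\{(I,1),(I,-1)\}$; hence $\ker\rho_{V/\mathbb{F}}\subseteq\{\pm1\}$. Since $c_I=\overline{\gamma(f)}\gamma(f)=1$ and $\mathbf{r}_0(I)=\mathrm{Id}$, we get $\rho_{V/\mathbb{F}}(I,\varepsilon)=\varepsilon^m\,\mathrm{Id}$. For odd $m$ only $\varepsilon=1$ gives the identity, so $\ker\rho_{V/\mathbb{F}}$ is trivial and $\rho_{V/\mathbb{F}}$ is faithful; for even $m$ both signs give $\mathrm{Id}$, so $\ker\rho_{V/\mathbb{F}}=\{\pm1\}$ exactly and $\rho_{V/\mathbb{F}}$ descends to a faithful representation of $Mp_2(\mathbb{F})/\{\pm1\}=SL_2(\mathbb{F})$. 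This completes the plan.
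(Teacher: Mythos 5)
Your proposal follows essentially the same route as the paper's proof: you identify $f_{c}=c^{-1}f$, use square-invariance to get $\gamma(c^{-1}f)=\gamma(cf)$ and land in $Mp_{\mathbb{F}}(V)$ via Lemma \ref{Mpelts}, reduce the homomorphism property to products $(A,1)(B,1)$, and check the five cases of Eq. \eqref{cocycle} against Eq. \eqref{Hilgamma}, with the fifth-case factor being the Weil index of the character attached to the symmetric map of Theorem 3 of \cite{[W]} --- which the paper computes explicitly as $\frac{ce+dg}{cg}\psi$, yielding $\gamma\big(\frac{ce+dg}{cg}f\big)=\overline{\gamma(-cg(ce+dg)f)}$, and then closes the case you defer by inserting $\gamma(f)\gamma(cgf)$ and its inverse and applying Eq. \eqref{Hilgamma} twice. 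One harmless slip: your parenthetical identity $(-c,-g)_{\mathbb{F}}=[(-1,-1)(cg,-1)]_{\mathbb{F}}$ is false in general (it holds precisely when $(c,g)_{\mathbb{F}}=1$), but your fourth case does not need it, since the target $(-c,-g)_{\mathbb{F}}^{m}$ follows directly from the complex conjugate of Eq. \eqref{Hilgamma} with $a=-c$, $b=-g$ together with $\gamma(-xf)=\overline{\gamma(xf)}$, exactly as in the paper's table.
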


\begin{proof}
Assume first that $m=1$, and denote $u^{2}$ for some $0 \neq u \in V$ by $r$.
The result now follows from Definition 5.2, Theorem 5.3, and the remark after
Corollary 5.7 of \cite{[Ra]} by choosing $\chi(x)=\lambda(rx)$, since then
$\gamma(f)$ is $\gamma_{F}\big(\frac{1}{2}\chi\big)$ in the notation of
\cite{[Ra]} and $\gamma_{F}\big(t,\frac{1}{2}\chi\big)$ is
$\frac{\gamma(tf)}{\gamma(f)}$ for any $t\in\mathbb{F}^{*}$. For the general
case use an orthogonal basis for $V$ in order to decompose $V$ as the orthogonal
direct sum of 1-dimensional spaces and use Proposition \ref{prod}.
Alternatively, one has the equality $\gamma(f)\overline{\gamma(\alpha
f)}\overline{\gamma(\beta f)}\gamma(\alpha\beta
f)=(\alpha,\beta)_{\mathbb{F}}^{m}$ arising from the formula at the bottom of
page 176 of \cite{[W]}, using which one may verify that the product of two elements of the asserted image of $\rho_{V/\mathbb{F}}$ also lies in that image (because of Equation \eqref{cocycle}). The assertion concerning the parity of $m$ is trivial. This proves the theorem.
\end{proof}

It it important to mention that Theorem \ref{Vrep} is not a proper special case
of the results of \cite{[Ra]}. Indeed, restricting the formula of \cite{[Ra]} to
$SL_{2}(\mathbb{F})$ (with any choice of character $\chi$) does not always give
the coefficients presented in Theorem \ref{Vrep}. The reason for this is the
fact that in order to represent the bilinear form on $V$ we need to take a
different character $\chi$ for every basis element, while \cite{[Ra]} considers
only one such character.

\medskip

Let us assume that $\mathbb{F}$ is non-archimedean (of characteristic $\neq2$),
with ring of integers $\mathcal{O}$, uniformizer $\pi$ (so that the unique
maximal ideal in $\mathcal{O}$ is $\pi\mathcal{O}$), and valuation $v$. It is
shown in \cite{[Ku2]} and \cite{[Ge]} that the metaplectic double cover splits
over the group $\Gamma_{1}(4,\mathcal{O})$ consisting of those matrice
$\binom{a\ \ b}{c\ \ d} \in SL_{2}(\mathcal{O})$ in which $4|c$ and $a \equiv
d\equiv1(\mathrm{mod\ }4)$. Explicitly, let
\[\iota:\Gamma_{1}(4,\mathcal{O}) \to
Mp_{2}(\mathbb{F}),\quad\iota(A)=\left\{\begin{array}{ll} (A,1) & c=0 \\
\big(A,(a,\pi^{v(c)})_{\mathbb{F}}\big)=\big(A,(d,\pi^{v(c)})_{\mathbb{F}}\big)
& c\neq0.\end{array}\right.\] Then we have
\begin{thm}
The map $\iota$ is a group-theoretic lift of $\Gamma_{1}(4,\mathcal{O})$ into
$Mp_{2}(\mathbb{F})$. \label{Gamma14O}
\end{thm}

This result is stated in Proposition 2.8 of \cite{[Ge]} and proven as Theorem 2
of \cite{[Ku2]}: The intersection of the group denoted $K^{N}$ with $N=4$ in
\cite{[Ge]} with $SL_{2}(\mathcal{O})$ gives precisely
$\Gamma_{1}(4,\mathcal{O})$. In these references the characteristic of
$\mathbb{F}$ is assumed to be 0, but the proof holds (at least for the double
cover) for any characteristic $\neq2$. In fact, the sign attached to $A \in
SL_{2}(\mathbb{F})$ in these references is $(d,c)_{\mathbb{F}}$ if $cd\neq0$ and
$\pi|c$ and 1 otherwise. However, using Proposition 2.1 of \cite{[Ge]} one can
show that this is equivalent to $(d,\pi^{v(c)})_{\mathbb{F}}$ for $c\neq0$, and
the equality $(a,\pi^{v(c)})_{\mathbb{F}}=(d,\pi^{v(c)})_{\mathbb{F}}$ follows
from similar considerations since $ad=1+bc$. If $a$ or $d$ vanish then
$\pi^{v(c)}=1$, and we define $(0,1)_{\mathbb{F}}$ to be 1. The observation that
for $\mathbb{F}=\mathbb{Q}_{p}$ with odd $p$ the Hilbert symbol
$(a,p^{v_{p}(c)})_{\mathbb{F}}=(d,p^{v_{p}(c)})_{\mathbb{F}}$ coincides with
the (extended) Legendre symbol
$\big(\frac{a}{p^{v_{p}(c)}}\big)=\big(\frac{d}{p^{v_{p}(c)}}\big)$ (also if
$ad=0$) will turn out useful in the sequel.

For odd residue characteristic we have
$\Gamma_{1}(4,\mathcal{O})=SL_{2}(\mathcal{O})$, so that the composition
$\rho_{V/\mathbb{F}}\circ\iota$ is a (faithful) representation of
$SL_{2}(\mathcal{O})$. This is why the representation of $Mp_{2}(\mathbb{Z})$
obtained from the process of Section \ref{LatWRep} factors through
$SL_{2}(\mathbb{Z})$ in this case. On the other hand, if $\mathbb{F}$ is an
extension of $\mathbb{Q}_{2}$ then $\Gamma_{1}(4,\mathcal{O})$ is a proper
subgroup of $SL_{2}(\mathcal{O})$, and in general Theorem \ref{Gamma14O} cannot
be extended to the full group $SL_{2}(\mathcal{O})$. This follows, for example,
from Theorem \ref{Mp2ZRQ2} below for $\mathbb{F}=\mathbb{Q}_{2}$, and Theorem
\ref{Vrep} implies that the same occurs wherever the degree of the extension
$\mathbb{F}/\mathbb{Q}_{2}$ is odd. It is likely that this occurs for every
extension of $\mathbb{Q}_{2}$ (see also the discussion at the end of Section
\ref{OddGen}).

\medskip

The two ``metaplectic groups over $\mathbb{Z}$'', namely the one embedded in
$Mp_{2}(\mathbb{R})$ as in Section \ref{LatWRep} and the one embedded in
$Mp_{2}(\mathbb{Q}_{2})$, must be isomorphic. This is so, since the double
cover of the ad\`{e}lic metaplectic group splits (hence the product of all the
local cocycles is trivial), and we have seen in Theorem \ref{Gamma14O} that the
metaplectic cover splits over every $\mathbb{Z}_{p}$ for every odd $p$. For our
purposes we need the explicit isomorphism, since the global representation
$\rho_{M}$ of Section \ref{LatWRep} is defined on $Mp_{2}(\mathbb{Z}) \leq
Mp_{2}(\mathbb{R})$ while the 2-adic representation $\rho_{M_{2}}$ is given in
terms of $Mp_{2}(\mathbb{Z}) \leq Mp_{2}(\mathbb{Z}_{2})$. We use the
``abstract'' notation $(A,\theta)$ for elements of $Mp_{2}(\mathbb{Q}_{2})$ and
the ``modular'' notation $\big(A,\theta\sqrt{j(A,\tau)}\big)$, with
$\sqrt{j(A,\tau)}$ having argument in $\big[-\frac{\pi}{2},\frac{\pi}{2}\big)$,
for $Mp_{2}(\mathbb{R})$. We define a map $i$ from the ''modular''
$Mp_{2}(\mathbb{Z}) \leq Mp_{2}(\mathbb{R})$ to $Mp_{2}(\mathbb{Q}_{2})$ by
\[i\big(A,\theta\sqrt{j(A,\tau)}\big)=\left\{\begin{array}{ll} (A,\theta) & c=0
\\
\big(A,\big(\frac{a}{c_{2}}\big)\theta\big)=\big(A,\big(\frac{d}{c_{2}}
\big)\theta\big) & c\neq0.\end{array}\right.\] We now prove
\begin{thm}
The map $i$ is a group injection. \label{Mp2ZRQ2}
\end{thm}

\begin{proof}
The proof reduces to expressing the local-to-global properties explicitly, and
comparing with the maps $\iota$ and $i$. Given two matrices $A$ and $B$ in
$SL_{2}(\mathbb{Q})$, the product $\prod_{p\leq\infty}\sigma_{p}(A,B)$ is finite
and equals unity by the Hilbert reciprocity law (this is equivalent to the
splitting of the ad\`{e}lic metaplectic group over $SL_{2}(\mathbb{Q})$). Here
$\sigma_{p}(A,B)$ is the value of the cocycle on $A$ and $B$ considered as
matrices in $SL_{2}(\mathbb{Q}_{p})$, with $\mathbb{Q}_{\infty}=\mathbb{R}$. The
product of $\big(A,\theta\sqrt{j(A,\tau)}\big)$ and
$\big(B,\psi\sqrt{j(B,\tau)}\big)$ for $A$ and $B$ in $SL_{2}(\mathbb{Q}) \leq
SL_{2}(\mathbb{R})$ thus yields the element
$\big(AB,\theta\psi\prod_{p<\infty}\sigma_{p}(A,B)\sqrt{j(AB,\tau)}\big)$.

For $A \in SL_{2}(\mathbb{Z})$ and odd $p$, write $\iota(A) \in
Mp_{2}(\mathbb{Q}_{p})$ from Theorem \ref{Gamma14O} as $(A,\delta_{A,p})$. The
equality $\sigma_{p}(A,B)=\delta_{AB,p}\delta_{A,p}\delta_{B,p}$ thus holds for
every such $A$, $B$, and $p$ by that theorem. Furthermore, write
$i\big(A,\theta\sqrt{j(A,\tau)}\big)$ as $(A,\theta\eta_{A})$. As for $c\neq0$
the coefficient $\delta_{A,p}$ equals
$\big(\frac{a}{p^{v_{p}(c)}}\big)=\big(\frac{d}{p^{v_{p}(c)}}\big)$, we deduce
that $\eta_{A}=\prod_{2<p<\infty}\delta_{A,p}$ (recall that in our convention,
the Legendre symbols over $|c_{2}|$ and over $c_{2}$ coincide). It follows that
\[\big(A,\theta\sqrt{j(A,\tau)}\big)\big(B,\psi\sqrt{j(B,\tau)}\big)=\big(AB,
\theta\psi\eta_{AB}\eta_{A}\eta_{B}\sigma_{2}(A,B)\sqrt{j(AB,\tau)}\big).\] But
applying $i$ to the right hand side gives the product, in
$Mp_{2}(\mathbb{Q}_{2})$, of the $i$-images of the elements appearing on the
left hand side. This completes the proof of the theorem since $i$ is clearly
injective.
\end{proof}

Comparing the actions of $T$ and $S$ shows that the process from Section
\ref{LatWRep} yields the restriction of $\rho_{V/\mathbb{F}}\circ\iota$ to
$SL_{2}(\mathbb{Z})$ for lattices over a local field $\mathbb{F}$ with odd
residue characteristic and the representation $\rho_{V/\mathbb{Q}_{2}} \circ i$
of $Mp_{2}(\mathbb{Z})$ for 2-adic vector spaces. Moreover, the congruence $a
\equiv d\equiv1(\mathrm{mod\ }4)$ for elements of the group
$\Gamma_{1}(4,\mathcal{O})$ considered in Theorem \ref{Gamma14O} implies that
when taking $\mathbb{F}=\mathbb{Q}_{2}$ (and $c\neq0$) in that theorem, the
Hilbert symbols can be replaced by $\big(\frac{2}{a}\big)^{v_{2}(c)}$ or
$\big(\frac{2}{d}\big)^{v_{2}(c)}$. Using the quadratic reciprocity law and
taking care of the convention difference for Legendre symbols, one verifies that
$i^{-1}\circ\iota$ coincides with the section denoted $s$ in \cite{[BS]} and
with the one appearing in Lemma 5.3 of \cite{[B3]} for $\Gamma_{1}(4)$.

\smallskip

At this point we remark about the connection to theta functions. The tensor
product $\rho_{M}=\bigotimes_{p}\rho_{M_{p}}$ may be seen, in the point of view
of Section \ref{Decom}, as a representation of (a double cover of) the group
$SL_{2}(\widehat{\mathbb{Z}})$ on a finite-dimensional subspace of
$M_{\mathbb{A}_{f}}$. Here $\mathbb{A}_{f}$ is the ring of finite ad\`{e}les of
$\mathbb{Q}$ and $\widehat{\mathbb{Z}}$ is the compact open subring
$\prod_{p}\mathbb{Z}_{p}$. The splitting of the double cover
$Mp_{2}(\mathbb{A})$ (based on the full ad\`{e}le ring) over
$SL_{2}(\mathbb{Q})$, which is mentioned in the proof of Theorem \ref{Mp2ZRQ2},
means that in some sense, the representation arising from
$Mp_{2}(\mathbb{A}_{f})$ behaves in the same way (up to dualization,
normalization, and other conventions) as the representation on Schwartz
functions in the infinite place. The natural Schwartz functions on
$M_{\mathbb{R}}$ which are usually considered in this context are the theta
functions of cosets of $M$ inside $M^{*}$, a vector-valued version of which
being modular with representation $\rho_{M}$ by Theorem 4.1 of \cite{[B1]}. To
emphasize this connection, we remark that Theorem 4.1 of \cite{[B1]} is, in
fact, a special case of Theorem 4 of \cite{[W]}, with $G=M_{\mathbb{R}}$ and
$\Gamma=M$. Theorem 6.1 of \cite{[Ge]} also shows a connection between
representations on spaces of theta functions (as the discrete non-cuspidal
spectrum of the associated Ad\'{e}lic space) and the corresponding Ad\'{e}lic
Weil representation. On the other hand, for our purposes of determining
$\rho_{M}(A)$ explicitly for each $A \in Mp_{2}(\mathbb{Z})$, one does not need
to use this connection.

\section{Evaluation of Local Operators \label{Local}}

In this Section we evaluate, for a non-archimedean $\mathbb{F}$, the operators
$\mathbf{r}_{0}(A)$ for a matrix $A \in SL_{2}(\mathbb{F})$ with integral
entries, on certain Schwartz functions on vector spaces over $\mathbb{F}$.
Applying this to $\mathbb{F}=\mathbb{Q}_{p}$ and multiplying by the roots of
unity from Section \ref{Meta} then combines, in the next Section, to yield the
main result of this paper.

\medskip

Let $\mathbb{F}$ be a non-archimedean local field of characteristic $\neq2$,
with normalized valuation $v$, ring $\mathcal{O}$ of integers, uniformizer
$\pi$, and residue field of cardinality $q$. Choose a character $\lambda$ on
$(\mathbb{F},+)$ such that $\lambda(x\mathcal{O})=1$ if and only if
$x\in\mathcal{O}$. Let $M$ be an even $\mathcal{O}$-lattice of rank $m$.
Consider, for some $\gamma \in D_{M}$, the characteristic function
$E_{M+\gamma}$ of a coset $M+\gamma \subseteq M_{\mathbb{F}}$. This function is
in $\mathcal{S}(M_{\mathbb{F}})$. We wish to evaluate
$\mathbf{r}_{0}(A)E_{M+\gamma}$ for $A=\binom{a\ \ b}{c\ \ d} \in
SL_{2}(\mathcal{O})$. Note that if $c\neq0$ then $M/cM$ is finite, hence no
convergence issues arise in the sums appearing below. The first step is
described in the following
\begin{prop}
If $c=0$ then $\mathbf{r}_{0}(A)E_{M+\gamma}$ equals
$\lambda\big(bd\frac{\gamma^{2}}{2}\big)E_{M+d\gamma}$. If $c\neq0$
then it is \[\frac{1}{q^{mv(c)/2}\sqrt{\Delta_{M}}}\sum_{\delta \in
D_{M}}\bigg[\sum_{\eta \in
M/cM}\lambda\bigg(\frac{a}{c}\frac{(\delta+\eta-d\gamma)^{2}}{2}\bigg)\bigg]
\lambda\bigg(b(\gamma,\delta)-bd\frac{\gamma^{2}}{2}\bigg)E_{M+\delta}.\]
\label{r0A}
\end{prop}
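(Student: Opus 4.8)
The plan is to feed $\Phi=E_{M+\gamma}$ directly into the two explicit formulas for $\mathbf{r}_{0}(A)$ recorded in Section~\ref{Meta}, using throughout that $M$ is even (so $\frac{\mu^{2}}{2}\in\mathcal{O}$ for $\mu\in M$), that $\gamma\in M^{*}$ (so $(\gamma,\mu)\in\mathcal{O}$ for $\mu\in M$), and that $\lambda$ is trivial exactly on $\mathcal{O}$. The case $c=0$ is pointwise: there $a$ acts as the scalar $a\in\mathcal{O}^{*}$ (because $ad=1$), so $\sqrt{|\det a|_{\mathbb{F}}}=|a|_{\mathbb{F}}^{m/2}=1$ and $E_{M+\gamma}(xa)=E_{M+d\gamma}(x)$ since $a^{-1}=d$ and $aM=M$. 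On the support $x\in M+d\gamma$ I expand $\frac{(xa,xb)}{2}=ab\frac{x^{2}}{2}$, write $x=d\gamma+\mu$, and discard $abd(\gamma,\mu)$ and $ab\frac{\mu^{2}}{2}$ (both in $\mathcal{O}$), leaving $abd^{2}\frac{\gamma^{2}}{2}=bd\frac{\gamma^{2}}{2}$ because $ad=1$. This gives the first assertion.

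For $c\neq0$ the scalar $c$ has $|\det c|_{\mathbb{F}}=q^{-mv(c)}$, so substituting $E_{M+\gamma}$ into the integral formula yields
\[
\mathbf{r}_{0}(A)E_{M+\gamma}(x)=q^{mv(c)/2}\int_{M+\gamma}\lambda\Big[\tfrac{a}{c}\tfrac{x^{2}}{2}+\tfrac{d}{c}\tfrac{y^{2}}{2}-\tfrac{(y,x)}{c}\Big]\,dy .
\]
The first key step is to split $M+\gamma$ into the finitely many cosets of $cM$ and to note that replacing $y$ by $y+c\mu$ with $\mu\in M$ alters the bracket only by $d(y,\mu)+dc\frac{\mu^{2}}{2}-(\mu,x)$, whose first two summands lie in $\mathcal{O}$; hence the integrand merely acquires the factor $\lambda(-(\mu,x))$. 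Each coset integral therefore factors as a constant phase times $\int_{cM}\lambda(-(z,x)/c)\,dz$, and rescaling $z=c\mu$ followed by orthogonality of characters on the compact group $M$ evaluates this to $q^{-mv(c)}\,\mathrm{vol}(M)$ when $x\in M^{*}$ and to $0$ otherwise. With the normalization $\mathrm{vol}(M)=1/\sqrt{\Delta_{M}}$ (the self-dual measure attached to $\lambda_{*}\psi$, as in Lemma~\ref{Zpext}, using $\mathrm{vol}(M)\mathrm{vol}(M^{*})=1$ and $[M^{*}:M]=\Delta_{M}$), this produces simultaneously the support condition $x\in M^{*}$, the prefactor $\frac{1}{q^{mv(c)/2}\sqrt{\Delta_{M}}}$, and the finite inner sum over $\eta\in M/cM$.

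At this stage I would have the answer in ``un-completed'' form, $\frac{1}{q^{mv(c)/2}\sqrt{\Delta_{M}}}\sum_{\delta}\big[\sum_{\eta}\lambda(E(\eta))\big]E_{M+\delta}$ with $E(\eta)=\frac{a}{c}\frac{x^{2}}{2}+\frac{d}{c}\frac{(\gamma+\eta)^{2}}{2}-\frac{(\gamma+\eta,x)}{c}$ at $x=\delta$. The last step, which I expect to be the main obstacle, is to match this to the stated ``completed-square'' phase. The decisive move is the reindexing $\eta\mapsto-a\eta$, which is a bijection of $M/cM$ \emph{precisely} because $ad-bc=1$ forces $a$ to be a unit modulo $c$. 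Using $ad=1+bc$ together with evenness one then checks term by term that
\[
E(-a\eta)\equiv\frac{a}{c}\frac{(\delta+\eta-d\gamma)^{2}}{2}+b(\gamma,\delta)-bd\frac{\gamma^{2}}{2}\pmod{\mathcal{O}} ;
\]
the offending leading coefficient $\frac{a^{2}d}{c}$ collapses to $\frac{a}{c}$ modulo $\mathcal{O}$ (since $ab\frac{\eta^{2}}{2}\in\mathcal{O}$), the cross terms recombine through $\frac{ad}{c}=\frac{1}{c}+b$, and the constants reorganize via symmetry of the form and $1-ad=-bc$ into the advertised outer phase. Finally I would observe that passing from $x=\delta$ to an arbitrary $x\in M+\delta$ changes nothing: the outer phase shifts only by $b(\gamma,\nu)\in\mathcal{O}$, and the inner sum is invariant under the compensating reindex $\eta\mapsto\eta-\nu$, so the function is genuinely constant on each coset $M+\delta$. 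This confirms the displayed coefficient and completes the proof.
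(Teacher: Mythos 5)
Your proof is correct and follows essentially the same route as the paper's: direct substitution into Weil's two formulas, decomposition of $M+\gamma$ into $cM$-cosets with character orthogonality yielding the support condition $x \in M^{*}$ and the factor $\frac{1}{\sqrt{\Delta_{M}}}$, and then reindexing by the unit $-a$ modulo $c$ together with the expansions $\frac{ad}{c}=\frac{1}{c}+b$ and $\frac{d}{c}=\frac{ad^{2}}{c}-bd$ to complete the square. The only cosmetic difference is that the paper carries the representative $x=\delta+w$ through the reindexing (replacing $\eta$ by $aw-a\eta$) so that independence of $w$ is visible midway, whereas you fix $x=\delta$ and verify coset-constancy of the final formula at the end --- the same computation in a slightly different order.
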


\begin{proof}
If $c=0$ then Eq. \eqref{Wc=0} gives
$\mathbf{r}_{0}(A)E_{M+\gamma}(x)=\lambda\bigg(ab\frac{x^{2}}{2}\bigg)E_{
M+\gamma}(ax)$ for $x \in M_{\mathbb{F}}$, since $\det a=a^{m}$ has valuation
0. As $a=\frac{1}{d}\in\mathcal{O}^{*}$ the characteristic function becomes
$E_{M+d\gamma}(x)$, and for $x \in M+d\gamma$ the argument of $\lambda$ becomes
$ab\frac{d^{2}\gamma^{2}}{2}=bd\frac{\gamma^{2}}{2}$ up to
$\mathcal{O}=\ker\lambda$. This covers the case $c=0$.

If $c\neq0$ then the coefficient $\frac{1}{\sqrt{|\det c|_{\mathbb{F}}}}$ in Eq.
\eqref{Wcinv} equals $q^{mv(c)/2}$, and we have to evaluate the integral.
Decompose $M+\gamma$ as $cM+\gamma+\eta$ for $\eta \in M/cM$, and then
substituting $y=\gamma+\eta+v$ for each $\eta$ yields
\[\int_{M+\gamma}\lambda\bigg(\frac{d}{c}\frac{y^{2}}{2}-\frac{(y,x)}{c}+\frac{a
}{c}\frac{x^{2}}{2}\bigg)dy=\] \[=\sum_{\eta \in
M/cM}\int_{cM}\lambda\bigg(\frac{d}{c}\frac{(\gamma+\eta)^{2}}{2}+\frac{d}{c}
(\gamma+\eta,v)+\frac{d}{c}\frac{v^{2}}{2}-\frac{(\gamma+\eta,x)}{c}-\frac{(v,x)
}{c}+\frac{a}{c}\frac{x^{2}}{2}\bigg)dv.\] With $v=cu$, $u \in M$ the integral
corresponding to $\eta$ becomes
\[q^{-mv(c)}\int_{M}\lambda\bigg(\frac{d}{c}\frac{(\gamma+\eta)^{2}}{2}
+\overbrace{d(\gamma+\eta,u)}+\overbrace{cd\frac{u^{2}}{2}}-\frac{(\gamma+\eta,
x)}{c}-(u,x)+\frac{a}{c}\frac{x^{2}}{2}\bigg)du.\] Both over-braced elements are
in $\mathcal{O}$, and the expression $\int_{M}\lambda\big(-(x,u)\big)du$
vanishes for $x \not\in M^{*}$ and gives the normalized measure
$\frac{1}{\sqrt{\Delta_{M}}}$ of $M$ if $x \in M^{*}$. We thus consider only $x
\in M^{*}$, so that $x \in M+\delta$ for some $\delta \in D_{M}$. Write
$x=\delta+w$ with $w \in M$, and $\mathbf{r}_{0}(A)E_{M+\gamma}(\delta+w)$
becomes \[\frac{1}{q^{mv(c)/2}\sqrt{\Delta_{M}}}\sum_{\eta \in
M/cM}\lambda\bigg(\frac{d}{c}\frac{(\gamma+\eta)^{2}}{2}-\frac{(\gamma+\eta,
\delta+w)}{c}+\frac{a}{c}\frac{(\delta+w)^{2}}{2}\bigg).\] Now, $w \in M$, and
multiplication by $a$ is injective on $M/cM$ since $a$ and $c$ are coprime.
Hence replacing $\eta$ by $aw-a\eta$ is an admissible change of variable on
$M/cM$. Expand all parentheses, and write $\frac{ad}{c}=\frac{1}{c}+b$ and
$\frac{a^{2}d}{c}=\frac{a}{c}+ab$. As the terms $b(\gamma,w)$,
$ab\frac{w^{2}}{2}$,  $ab(\eta,w)$, $b(\gamma,\eta)$, and $ab\frac{\eta^{2}}{2}$
are all in  $\mathcal{O}$, this transforms
$\mathbf{r}_{0}(A)E_{M+\gamma}(\delta+w)$ to the form
\begin{equation}
\frac{1}{q^{mv(c)/2}\sqrt{\Delta_{M}}}\sum_{\eta \in
M/cM}\lambda\bigg(\frac{d}{c}\frac{\gamma^{2}}{2}-\frac{(\gamma,\delta+\eta)}{
c} +\frac{a}{c}\frac{(\delta+\eta)^{2}}{2}\bigg). \label{r0AGauss}
\end{equation}
As the expression in Eq. \eqref{r0AGauss} is independent of $w$, we deduce that
$\mathbf{r}_{0}(A)E_{M+\gamma}$ is a linear combination of
$\{E_{M+\delta}\}_{\delta \in D_{M}}$. Expanding $\frac{1}{c}=\frac{ad}{c}-b$
and $\frac{d}{c}=\frac{ad^{2}}{c}-bd$ turns the Gauss sum in Eq.
\eqref{r0AGauss} to \[\sum_{\eta \in
M/cM}\lambda\bigg(\frac{a}{c}\frac{(\delta+\eta-d\gamma)^{2}}{2}
+b(\gamma,\delta+\eta)-bd\frac{\gamma^{2}}{2}\bigg),\] which completes the proof
of the proposition as $b(\gamma,\eta)\in\mathcal{O}$.
\end{proof}

Note that the Gauss sum in Proposition \ref{r0A} is well-defined, i.e.,
independent of the elements from $M^{*}$ representing $\gamma$ and $\delta$
in $D_{M}=M^{*}/M$ and of the element of $M$ which represents $\eta \in M/cM$.

We now identify $V_{\rho_{M}}=\mathbb{C}[M^{*}/M]$ with the subspace
$\bigoplus_{\gamma \in D_{M}}\mathbb{C}E_{M+\gamma}$ of
$\mathcal{S}(M_{\mathbb{F}})$ via $e_{\gamma} \leftrightarrow E_{M+\gamma}$ as
in Section \ref{Decom}, and deduce the following
\begin{cor}
If $c=0$ then
$\mathbf{r}_{0}(A)e_{\gamma}$ equals
$\lambda\big(bd\frac{\gamma^{2}}{2}\big)e_{d\gamma} $, while for $c\neq0$ it
equals \[\frac{1}{q^{mv(c)/2}\sqrt{\Delta_{M}}}\sum_{\beta \in
D_{M}}\!\bigg[\sum_{\eta \in
M/cM}\!\!\lambda\bigg(\!\frac{a}{c}\frac{\eta^{2}}{2}+a\frac{(\beta,\eta)}{c}
\bigg)\bigg]\lambda\bigg(\!\frac{a}{c}\frac{\beta^{2}}{2}+b(\gamma,
\beta)+bd\frac{\gamma^{2}}{2}\bigg)e_{\beta+d\gamma}.\] \label{r0ACDM}
\end{cor}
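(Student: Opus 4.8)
The plan is to deduce Corollary \ref{r0ACDM} directly from Proposition \ref{r0A} via the identification $E_{M+\gamma}\leftrightarrow e_{\gamma}$ together with a single change of the outer summation variable. The $c=0$ case requires nothing beyond rewriting $E_{M+d\gamma}$ as $e_{d\gamma}$, so that $\mathbf{r}_{0}(A)e_{\gamma}=\lambda\big(bd\frac{\gamma^{2}}{2}\big)e_{d\gamma}$ is immediate.

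For $c\neq0$ I would start from the formula in Proposition \ref{r0A} and substitute $\delta=\beta+d\gamma$. Since $D_{M}=M^{*}/M$ is a finite abelian group and the class of $d\gamma$ is fixed, this is a bijection of the index set over which $\delta$ ranges, so the sum $\sum_{\delta\in D_{M}}$ becomes $\sum_{\beta\in D_{M}}$ and the basis vector $E_{M+\delta}=e_{\delta}$ becomes $e_{\beta+d\gamma}$. Under the same substitution the inner Gauss sum's argument $\frac{a}{c}\frac{(\delta+\eta-d\gamma)^{2}}{2}$ collapses to $\frac{a}{c}\frac{(\beta+\eta)^{2}}{2}$; expanding the square as $\beta^{2}+2(\beta,\eta)+\eta^{2}$ and pulling the $\eta$-independent term $\frac{a}{c}\frac{\beta^{2}}{2}$ out of the sum over $\eta\in M/cM$ leaves precisely $\sum_{\eta}\lambda\big(\frac{a}{c}\frac{\eta^{2}}{2}+a\frac{(\beta,\eta)}{c}\big)$.

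It then remains to check the outer character. Using $(\gamma,\delta)=(\gamma,\beta)+d\gamma^{2}$ one finds $b(\gamma,\delta)-bd\frac{\gamma^{2}}{2}=b(\gamma,\beta)+bd\frac{\gamma^{2}}{2}$, and multiplying this by the factor $\lambda\big(\frac{a}{c}\frac{\beta^{2}}{2}\big)$ extracted from the inner sum yields the combined prefactor $\lambda\big(\frac{a}{c}\frac{\beta^{2}}{2}+b(\gamma,\beta)+bd\frac{\gamma^{2}}{2}\big)$, which is exactly the one appearing in the Corollary. Assembling these pieces reproduces the stated expression.

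Throughout, the only point demanding care is that every rearrangement respects the well-definedness of the summands modulo $\mathcal{O}$; this follows because $M$ is even (so $\frac{\mu^{2}}{2}\in\mathcal{O}$ for $\mu\in M$) and $M^{*}$ pairs $M$ into $\mathcal{O}$, together with the well-definedness of the Gauss sum already recorded after Proposition \ref{r0A}. Consequently this is bookkeeping rather than a genuine obstacle, and no step beyond the single reindexing $\delta=\beta+d\gamma$ and the expansion of the square is needed.
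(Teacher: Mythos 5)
Your proposal is correct and coincides with the paper's own proof: both deduce the corollary from Proposition \ref{r0A} by the single substitution $\delta=\beta+d\gamma$, the identity $b(\gamma,d\gamma)-bd\frac{\gamma^{2}}{2}=bd\frac{\gamma^{2}}{2}$, and the expansion of $\frac{(\beta+\eta)^{2}}{2}$ to pull the $\eta$-independent term $\frac{a}{c}\frac{\beta^{2}}{2}$ out of the Gauss sum. Your closing remark on well-definedness matches the observation the paper records immediately after Proposition \ref{r0A}, so nothing is missing.
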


\begin{proof}
This is just Proposition \ref{r0A} in the $V_{\rho_{M}}$ terminology, after
substituting $\delta=\beta+d\gamma$ in the formula for $c\neq0$.
\end{proof}

The advantage of Corollary \ref{r0ACDM} over Proposition \ref{r0A} is that the
Gauss sum is now independent of $\gamma$. We remark again that taking
$\mathbb{F}=\mathbb{Q}_{p}$ and $\lambda=\chi_{p}$ in Corollary \ref{r0ACDM}
yields the action of $T_{f}$ for $A=T$ and of $\widetilde{S}_{f}$ for $A=S$,
from which Observation \ref{Zpext} follows by the uniqueness of the scalar
required for obtaining $S_{f}$ from $\widetilde{S}_{f}$ in Section
\ref{LatWRep}.

\medskip

Let $M$ be an even lattice over an integral domain $R$ whose fraction field
$\mathbb{K}$ with $ch\mathbb{K}\neq2$, and let $c \in R$. Multiplication by $c$
yields the exact sequence $0 \to D_{M,c} \to D_{M} \to D_{M}^{c}\to0$ (see
Section 2 of \cite{[Sche]}), and $D_{M,c}$ and $D_{M}^{c}$ are orthogonal
complements in the non-degenerate $\mathbb{K}/R$-valued pairing on $D_{M}$.
Moreover, the map taking $\mu \in D_{M,c}$ to
$c\frac{\mu^{2}}{2}+(\beta,\mu)\in\mathbb{K}/R$ is linear on $D_{M,c}$ for any
$\beta \in D_{M}$, and we denote by $D_{M}^{c*}$ the set of those $\beta \in
D_{M}$ for which this map is identically $0\in\mathbb{K}/R$. This set is a coset
of $D_{M}^{c}$ inside $D_{M}$---see Proposition 2.1 of \cite{[Sche]} for the
case $R=\mathbb{Z}$, and the proof holds equally well for the more general
setting. We choose an element $x_{c}$ in the coset (in future applications we
shall specify the choice), so that any $\beta \in D_{M}^{c*}$ is $x_{c}+c\alpha$
for some $\alpha \in D_{M}$ which is well-defined up to $D_{M,c}$. Proposition
2.2 of \cite{[Sche]} (which generalizes as well) shows that the element
$\frac{\beta_{c}^{2}}{2}=c\frac{\alpha^{2}}{2}+(x_{c},\alpha)$ of $\mathbb{K}/R$
is well-defined (i.e., independent of the choice of $\alpha$). Note that this
element depends on the choice of $x_{c}$, but we consider $x_{c}$ as a pre-fixed
element of $D_{M}$. If $2 \in R^{*}$ then $D_{M}^{c*}=D_{M}^{c}$, so that the
natural choice in this case is to take $x_{c}=0$ for every $c$. These sets
$D_{M,c}$, $D_{M}^{c}$, and $D_{M}^{c*}$ are additive with respect to
orthogonal direct sums, and in case $R$ is the ring of integers $\mathcal{O}$ in
a global field of characteristic $\neq2$, belonging to any of them is a local
property with respect to a decomposition as in Section \ref{Decom}. In the
latter case we denote the cardinalities of $D_{M,c}$ by $\Delta_{M,c}$ (as in
Theorem \ref{M/cM} above). All these observations hold also for $c=0$, where
$D_{M,0}=D_{M}$, $D_{M}^{0}=D_{M}^{0*}=\{0\}$, $x_{0}=0$, and
$\frac{\beta_{0}}{2}=0\in\mathbb{K}/R$.

Returning to the case of local, non-archimedean $\mathbb{F}$, we now prove
\begin{lem}
The Gauss sum in Corollary \ref{r0ACDM} vanishes for $\beta \not\in D_{M}^{c*}$.
\label{DMc*}
\end{lem}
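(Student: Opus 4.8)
The plan is to exploit the invariance of the Gauss sum under a family of translations of the summation variable and to identify the resulting multiplicative factors with the pairing on $D_M$. Write $G(\beta)=\sum_{\eta\in M/cM}\lambda\big(\tfrac{a}{c}\tfrac{\eta^{2}}{2}+\tfrac{a}{c}(\beta,\eta)\big)$ for the sum appearing in Corollary \ref{r0ACDM}, and set $\phi_\beta(\gamma)=c\tfrac{\gamma^{2}}{2}+(\beta,\gamma)\in\mathbb{F}/\mathcal{O}$ for $\gamma\in D_{M,c}$, the linear map defining $D_M^{c*}$. For any $\gamma\in D_{M,c}$ we have $c\gamma\in M$, so the translation $\eta\mapsto\eta+c\gamma$ is a bijection of $M/cM$; first I would carry out the (routine) expansion of the shifted summand and observe that the cross term $a(\eta,\gamma)$ lies in $\mathcal{O}$ (as $\eta\in M$ and $\gamma\in M^{*}$) and is therefore killed by $\lambda$, while the remaining change is exactly $a\big(c\tfrac{\gamma^2}{2}+(\beta,\gamma)\big)=a\phi_\beta(\gamma)$. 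Hence $G(\beta)=\lambda\big(a\phi_\beta(\gamma)\big)\,G(\beta)$, so $G(\beta)=0$ as soon as $\lambda\big(a\phi_\beta(\gamma)\big)\neq1$ for some $\gamma\in D_{M,c}$.

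It then remains to produce such a $\gamma$ whenever $\beta\notin D_M^{c*}$. Here I would use the element $x_c\in D_M^{c*}$ fixed above: since $\phi_{x_c}\equiv0$ in $\mathbb{F}/\mathcal{O}$, subtraction gives $\phi_\beta(\gamma)\equiv(\beta-x_c,\gamma)\pmod{\mathcal{O}}$, so the quadratic term disappears and $\lambda(a\phi_\beta(\gamma))=\lambda\big(a(\beta-x_c,\gamma)\big)=\lambda\big((\beta-x_c,a\gamma)\big)$. This is the crucial simplification, and I expect it to be the main point of the argument: it converts the $\lambda$-value of $\phi_\beta$, which a priori depends on the quadratic form, into a value of the bilinear pairing, where the self-duality of $D_M$ recalled in Section \ref{LatWRep} can be applied directly. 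Because $a$ is coprime to $c$, multiplication by $a$ is an automorphism of the $c$-torsion group $D_{M,c}$, so as $\gamma$ ranges over $D_{M,c}$ the element $a\gamma$ ranges over all of $D_{M,c}$; thus it suffices to find $\gamma'\in D_{M,c}$ with $\lambda\big((\beta-x_c,\gamma')\big)\neq1$.

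Finally, recall that composing the pairing on $D_M$ with $\lambda$ yields a perfect pairing $D_M\times D_M\to S^1$ (the self-duality identifying $D_M$ with $\widehat{D_M}$). Under a perfect self-pairing of a finite abelian group, the annihilator of $\ker(c)=D_{M,c}$ is $\operatorname{im}(c)=D_M^{c}$; hence $\lambda\big((\beta-x_c,\gamma')\big)=1$ for every $\gamma'\in D_{M,c}$ holds if and only if $\beta-x_c\in D_M^{c}$, i.e.\ if and only if $\beta\in x_c+D_M^{c}=D_M^{c*}$. Consequently, for $\beta\notin D_M^{c*}$ the required $\gamma'$ exists, the translation factor is nontrivial, and $G(\beta)=0$. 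I note that this route never uses injectivity of $\lambda$ on all of $\mathbb{F}/\mathcal{O}$ (which can fail for higher-degree $\mathbb{F}$); the reduction through $x_c$ to the linear pairing, where only the given nondegeneracy is needed, is precisely what makes the $\lambda$-independent condition $\beta\in D_M^{c*}$ the correct one.
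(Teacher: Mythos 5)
Your proof is correct, and its opening move is exactly the paper's: both arguments shift the summation index by $c\rho$ for $\rho$ representing an element of $D_{M,c}$, observe that the cross term $a(\eta,\rho)$ dies under $\lambda$, and conclude that the Gauss sum is multiplied by $\lambda\big(ac\frac{\rho^{2}}{2}+a(\beta,\rho)\big)$, hence vanishes once this multiplier is nontrivial for some $\rho$. Where you genuinely diverge is in certifying nontriviality. The paper finishes in one line: for $\beta\notin D_{M}^{c*}$ the map $\phi_{\beta}$ is not identically zero in $\mathbb{F}/\mathcal{O}$, and ``the fact that $\lambda(x\mathcal{O})=1$ only for $x\in\mathcal{O}$ and $a\in\mathcal{O}^{*}$'' produces a bad $\rho$. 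You instead subtract $\phi_{x_{c}}\equiv0$ to get $\phi_{\beta}(\gamma)=(\beta-x_{c},\gamma)$ in $\mathbb{F}/\mathcal{O}$, then invoke perfectness of the $\lambda$-composed pairing on $D_{M}$ and the annihilator identity $\mathrm{Ann}(D_{M,c})=D_{M}^{c}$. Your route buys two things. First, it yields the sharper ``if and only if'': the multiplier is trivial for all $\gamma$ precisely when $\beta \in D_{M}^{c*}$, which makes transparent why the vanishing locus is independent of the choice of $\lambda$. Second, it quietly fills in a point the paper's terse conclusion glosses over when the residue field has degree greater than one: there $\lambda$ is \emph{not} injective on $\mathbb{F}/\mathcal{O}$, so $\phi_{\beta}(\rho)\neq0$ alone does not force $\lambda(a\phi_{\beta}(\rho))\neq1$; one needs the image of $\phi_{\beta}$ to be an $\mathcal{O}$-submodule (hence a full fractional-ideal quotient, on which the conductor condition does give nontriviality), and that is exactly what your identity $\phi_{\beta}(\gamma)=(\beta-x_{c},\gamma)$ establishes, since the bilinear pairing is $\mathcal{O}$-linear while $\phi_{\beta}$ a priori is only additive. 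The costs are mild: you lean on the existence of $x_{c}$, i.e., on $D_{M}^{c*}$ being a nonempty coset of $D_{M}^{c}$ (established earlier in the paper, so legitimate here), and on perfectness of the composed pairing, which you cite rather than verify --- for completeness note it follows from the conductor condition because $(\gamma,M^{*})$ is a fractional ideal, equal to $\pi^{-k}\mathcal{O}$ with $k\geq1$ exactly when $\gamma\notin M$. Your argument also covers the unit-$c$ case uniformly ($D_{M,c}=0$, $D_{M}^{c*}=D_{M}$, statement vacuous), which the paper disposes of separately.
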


\begin{proof}
The assertion is vacuous if $c\in\mathcal{O}^{*}$, hence assume otherwise. It
follows that $a\in\mathcal{O}^{*}$. Take $\rho \in M^{*}$ such that $\rho+M \in
D_{M,c}$, and change the summation index $\eta$ to $\eta+c\rho$. This multiplies
the Gauss sum by $\lambda\big(ac\frac{\rho^{2}}{2}+a(\beta,\rho)\big)$. If
$\beta \not\in D_{M}^{c*}$ then this multiplier differs from 1 for some $\rho$
by our assumption on $a$ and $\ker\lambda$. Hence the sum must vanish. This
proves the lemma.
\end{proof}

We now choose $\tilde{x}_{c} \in M^{*}$ such that $x_{c}=\tilde{x}_{c}+M \in
D_{M}^{c*}$, and obtain
\begin{cor}
If $c\neq0$ then $\mathbf{r}_{0}(A)e_{\gamma}$ is
\[\frac{\lambda\big(\frac{a}{c}\frac{\tilde{x}_{c}^{2}}{2}\big)}{q^{mv(c)/2}
\sqrt{\Delta_{M}}}\sum_{\eta \in
M/cM}\lambda\bigg(\frac{a}{c}\frac{\eta^{2}}{2}+a\frac{(\tilde{x}_{c},\eta)}{c}
\bigg)\sum_{\beta \in
D_{M}^{c*}}\lambda\bigg(a\frac{\beta_{c}^{2}}{2}+b(\gamma,\beta)+bd\frac{\gamma^
{2}}{2}\bigg)e_{\beta+d\gamma}.\] \label{finloc}
\end{cor}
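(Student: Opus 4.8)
The plan is to start from the explicit expression for $\mathbf{r}_0(A)e_\gamma$ with $c\neq 0$ furnished by Corollary \ref{r0ACDM}, and to streamline it using two ingredients already in hand: the vanishing statement of Lemma \ref{DMc*}, and the coset description of $D_M^{c*}$ (a coset of $D_M^c$ containing the fixed $x_c$, with the well-defined quantity $\frac{\beta_c^2}{2}$) established in the subsection on subsets of discriminant forms. First I would apply Lemma \ref{DMc*} to delete every term of the outer sum in Corollary \ref{r0ACDM} with $\beta\notin D_M^{c*}$, since the inner Gauss sum over $\eta$ vanishes for such $\beta$. This immediately restricts the outer summation to $\beta\in D_M^{c*}$.

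Next, for each such $\beta$ I would write $\beta=x_c+c\alpha$ with $\alpha\in D_M$ (well-defined up to $D_{M,c}$), and track how the two $\beta$-dependent pieces simplify. In the Gauss sum over $\eta$, the term $a\frac{(\beta,\eta)}{c}=a\frac{(x_c,\eta)}{c}+a(\alpha,\eta)$, and since $\alpha\in M^*$ and $\eta\in M$ give $a(\alpha,\eta)\in\mathcal{O}$, the factor $\lambda\big(a(\alpha,\eta)\big)=1$ drops out; hence the inner sum equals $\sum_{\eta\in M/cM}\lambda\big(\frac{a}{c}\frac{\eta^2}{2}+a\frac{(x_c,\eta)}{c}\big)$, which is independent of $\beta$ and therefore factors out of the outer sum, producing exactly the displayed $\eta$-sum. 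The key computation is the phase $\frac{a}{c}\frac{\beta^2}{2}$: expanding $\frac{\beta^2}{2}=\frac{x_c^2}{2}+c(x_c,\alpha)+c^2\frac{\alpha^2}{2}$ and dividing by $c$ yields $\frac{a}{c}\frac{x_c^2}{2}+a\big((x_c,\alpha)+c\frac{\alpha^2}{2}\big)$, and the bracketed quantity is precisely $\frac{\beta_c^2}{2}$ by its definition. Thus $\frac{a}{c}\frac{\beta^2}{2}=\frac{a}{c}\frac{x_c^2}{2}+a\frac{\beta_c^2}{2}$, so the $\beta$-independent constant $\lambda\big(\frac{a}{c}\frac{x_c^2}{2}\big)$ pulls out in front of the whole expression while the surviving $a\frac{\beta_c^2}{2}$ stays inside the summand alongside $b(\gamma,\beta)+bd\frac{\gamma^2}{2}$, matching the claimed formula once the pulled-out factors are collected in front of the $\eta$-sum.

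The manipulations are essentially bookkeeping, so I expect no serious obstacle; the only points requiring genuine care are the two integrality/well-definedness checks. One must verify that $a(\alpha,\eta)\in\mathcal{O}$ (so that it is annihilated by $\lambda$), which holds by the defining property of the dual lattice together with $a\in\mathcal{O}$, and that $\frac{\beta_c^2}{2}$ is well-defined modulo $\mathcal{O}$ independently of the chosen representative $\alpha$ of $\beta$ in $D_M^{c*}$ (guaranteed by the generalization of Proposition 2.2 of \cite{[Sche]} recorded above), so that $\lambda\big(a\frac{\beta_c^2}{2}\big)$ is unambiguous. With these in place the identity of Corollary \ref{finloc} follows directly.
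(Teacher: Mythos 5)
Your proposal is correct and follows essentially the same route as the paper's proof: restrict the outer sum via Lemma \ref{DMc*}, write $\beta=x_{c}+c\alpha$, use the integrality $a(\alpha,\eta)\in\mathcal{O}$ to see that the $\eta$-sum is independent of $\beta$ and factors out, and identify $\frac{a}{c}\frac{\beta^{2}}{2}=\frac{a}{c}\frac{x_{c}^{2}}{2}+a\frac{\beta_{c}^{2}}{2}$ from the definition of $\frac{\beta_{c}^{2}}{2}$. Your explicit expansion of $\frac{\beta^{2}}{2}$ and the two well-definedness checks merely spell out steps the paper leaves implicit.
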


\begin{proof}
Take only $\beta \in D_{M}^{c*}$ in the sum from Corollary \ref{r0ACDM}, write
such $\beta$ as $x_{c}+c\alpha$ with $\alpha \in M^{*}$, and use the definition
of $\frac{\beta_{c}^{2}}{2}$.
\end{proof}

Note that even though $\frac{\beta_{c}^{2}}{2}$ depends only on $x_{c} \in
D_{M}$, the term $\lambda\big(\frac{a}{c}\frac{\tilde{x}_{c}^{2}}{2}\big)$ and
the Gauss sum depend on the particular element $\tilde{x}_{c} \in M^{*}$ because
of the division by $c$. Their product depends only on $x_{c}$, but in the
following we evaluate each term separately (with a specific choice of
$\tilde{x}_{c}$).

Consider now the case $R=\mathbb{Z}_{2}$. Section 2 of \cite{[Sche]} asserts
that the element $\tilde{x}_{c}$ used in Theorem \ref{M/cM} satisfies $x_{c} \in
D_{M}^{c*}$. Take $\lambda=\chi_{2}$ on $\mathbb{F}=\mathbb{Q}_{2}$, and recall
that if $c$ is odd then the Jordan component with $q=2^{v_{2}(c)}=1$ comes with
the index $II$ (for $M$ to be even), while if $c$ is even then $a=a_{2}$. A
straightforward calculation now evaluates the external coefficient in Corollary
\ref{finloc} to be
\begin{lem}
If the chosen Jordan decomposition of $M$ involves the component
$(2^{v_{2}(c)})^{\kappa n}_{t/II}$ then
$\chi_{2}\big(\frac{a}{c}\frac{\tilde{x}_{c}^{2}}{2}\big)$ equals
$\zeta_{8}^{a_{2}c_{2}t}$, where for an index $II$ we take $t=0$. \label{xcnorm}
\end{lem}
If $M$ is a $\mathbb{Z}$-lattice then we define $x_{c} \in D_{M} $ to be the
image of the thus defined $x_{c} \in D_{M_{2}}$ inside $D_{M}$ (in
correspondence with $x_{c}=0 \in D_{M_{p}}$ for odd $p$).

\section{General Formulae for $\rho_{M}$ \label{Main}}

In this Section we derive the main result of this paper, i.e., the action of the
general element of $Mp_{2}(\mathbb{Z})$ via the representation $\rho_{M}$.

\medskip

Let $M$ be an even lattice, and take an element
$\big(A,\theta\sqrt{j(A,\tau)}\big)$ of $Mp_{2}(\mathbb{Z})$. We can now
evaluate the image of this element under $\rho_{M}=\bigotimes_{p}\rho_{M_{p}}$
by specializing the results of Sections \ref{Meta} and \ref{Local} to
$\mathbb{F}=\mathbb{Q}_{p}$ and $\lambda=\chi_{p}$. Before we get to the final
formulae, we remark that the formula from part (i) of Proposition 1.6 of
\cite{[Sn]} is obtained as the tensor product of the $p$-adic
$\mathbf{r}_{0}(A)$. Indeed, in case $c=0$ we have just $a=d=\pm1$ and the
assertion follows from either Proposition \ref{r0A} or Corollary \ref{r0ACDM},
and if $c\neq0$ the one presents $\mathbf{r}_{0}(A)$ as in Eq. \eqref{r0AGauss}
and establishes the formula in question. As this formula is stated for the
actual $\rho_{M}$-image (rather than just the tensor product of the operators
$\mathbf{r}_{0}(A)$), it has to be multiplied by the coefficients from Theorem
\ref{Vrep} for all $p$, which yields $\zeta_{8}^{-sgn(c)sgn(M)}$ if $c\neq0$
(here $sgn(c)$ is just the usual sign $\frac{c}{|c|}\in\{\pm1\}$ of $c$) and
$\zeta_{8}^{(1-a)sgn(M)}$ if $c=0$ (and $a=d=\pm1$).

\smallskip

By putting $m=rk(M)$, we can now state and prove the main result.
\begin{thm}
For any element $\big(A,\theta\sqrt{j(A,\tau)}\big) \in Mp_{2}(\mathbb{Z})$
(with $\sqrt{j(A,\tau)}$ having its argument in
$\big[-\frac{\pi}{2},\frac{\pi}{2}\big)$ as usual) we have that
$\rho_{M}\big(A,\theta\sqrt{j(A,\tau)}\big)e_{\gamma}$ is
\[\prod_{p}\xi_{p}\cdot\frac{\sqrt{\Delta_{M,c}}}{\sqrt{\Delta_{M}}} \sum_{\beta
\in D_{M}^{c*}}\mathbf{e}\bigg(a\frac{\beta_{c}^{2}}{2}+b(\gamma,\beta)+bd\frac{
\gamma^{2}}{2}\bigg)e_{\beta+d\gamma},\] where $D_{M}^{c*}$, $\Delta_{M,c}$, and
$\frac{\beta_{c}^{2}}{2}$ are defined in the paragraph preceding Lemma
\ref{DMc*}. The root of unity $\xi_{p}$ defined to be
$\big(\frac{a_{p}}{\Delta_{M_{p}}}\big)\prod_{q\not|c}\overline{\gamma\big(q^{
\kappa n}(a_{p}c)\big)}$ for odd $p$ and
\[\theta^{m}\bigg(\frac{a}{c_{2}}\bigg)^{m}(-1)^{m\varepsilon(a_{2}
)\varepsilon(c_{2})}\bigg(\frac{2^{v_{2}(c)}}{a_{2}}\bigg)^{m}\bigg(\frac{
\Delta_{M_{2}}}{a_{2}}\bigg)\gamma(M_{2})^{a_{2}-1}\prod_{q\not|c}\overline{
\gamma\big(q^{\kappa n}_{t/II}(a_{2}c)\big)}\] for $p=2$, in case $ac\neq0$. For
$a=0$ we have just $\xi_{p}=\overline{\gamma\big(M_{p}(c)\big)}$ for odd $p$ and
$\xi_{2}=\theta^{m}\overline{\gamma\big(M_{2}(c)\big)}$, while if $c=0$ then
$\xi_{p}$ equals $\big(\frac{a_{p}}{\Delta_{M_{p}}}\big)$ for odd $p$ and
$\xi_{2}=\theta^{m}\gamma(M_{2})^{1-a}$. \label{final}
\end{thm}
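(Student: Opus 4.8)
The plan is to assemble the formula prime by prime from ingredients already prepared, so that the only real work is a careful accounting of roots of unity. By the decomposition $\rho_M=\bigotimes_p\rho_{M_p}$ of Section \ref{Decom} it suffices to compute $\rho_{M_p}(A)e_{\gamma_p}$ for each $p$ and then tensor, using $\mathbf{e}=\prod_p\chi_p$ and the orthogonal splitting $D_M=\bigoplus_p D_{M_p}$. For odd $p$, Corollary \ref{rhoMp} (with $\mathbb{F}=\mathbb{Q}_p$, and the Hilbert symbol $(a_p,p^{v_p(c)})_{\mathbb{Q}_p}$ read as a Legendre symbol) writes $\rho_{M_p}(A)$ as $\big(\frac{a_p}{p^{v_p(c)}}\big)^m\overline{\gamma(cf)}\,\mathbf{r}_0(A)$ when $c\neq0$; for $p=2$, Corollary \ref{rhoM2} writes it as $\varepsilon^m\big(\frac{a}{c_2}\big)^m\overline{\gamma(cf)}\,\mathbf{r}_0(A)$; and for $c=0$ both scalars are $\overline{\gamma(af)}\gamma(f)$. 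I would then substitute the explicit shape of $\mathbf{r}_0(A)e_{\gamma_p}$ furnished by Corollary \ref{finloc}.

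The two remaining ingredients of Corollary \ref{finloc} are handled by earlier results. Its inner Gauss sum is exactly the one evaluated in Theorem \ref{M/cM}, equal to $p^{mv_p(c)/2}\sqrt{\Delta_{M_p,c}}\,\delta_p$ with $\delta_p=\prod_{pq\mid c}\gamma\big(q^{\varepsilon n}_{/t/II}(a_pc)\big)$, whose power of $p$ cancels the denominator in Corollary \ref{finloc}. The prefactor $\chi_p\big(\frac{a}{c}\frac{x_c^2}{2}\big)$ is $1$ for odd $p$ (where $x_c=0$) and $\zeta_8^{a_2c_2t}$ for $p=2$ by Corollary \ref{xcnorm}, while Lemma \ref{xc} guarantees that the chosen $x_c$ does lie in $D_{M_p}^{c*}$. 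Tensoring over $p$, the $\beta$-sums combine into a single sum over $D_M^{c*}=\bigoplus_p D_{M_p}^{c*}$ whose argument assembles, via $\mathbf{e}=\prod_p\chi_p$, into $a\frac{\beta_c^2}{2}+b(\gamma,\beta)+bd\frac{\gamma^2}{2}$, the radicals multiply into $\sqrt{\Delta_{M,c}}/\sqrt{\Delta_M}$, and what is left at each prime, namely the product of the representation scalar, $\delta_p$ and $\chi_p\big(\frac{a}{c}\frac{x_c^2}{2}\big)$, is by definition $\xi_p$.

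Matching this leftover with the stated $\xi_p$ is the substance of the proof. Writing $\overline{\gamma(cf)}=\prod_q\overline{\gamma\big(q^{\varepsilon n}(c)\big)}$ over all Jordan components and sorting them by whether $v_p(q)$ is smaller than, equal to, or larger than $v_p(c)$, I would apply Lemma \ref{gammacomp}. For odd $p$ it lets me trade each $\gamma\big(q^{\varepsilon n}(c)\big)$ for $\gamma\big(q^{\varepsilon n}(a_pc)\big)$ at the cost of a Legendre symbol: the Weil indices of the components with $pq\mid c$ then cancel against $\delta_p$, the components with $q\nmid c$ produce the advertised factor $\prod_{q\nmid c}\overline{\gamma\big(q^{\varepsilon n}(a_pc)\big)}$, and the components with $v_p(q)=v_p(c)$ contribute signs only. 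Collecting every Legendre symbol together with the scalar's $\big(\frac{a_p}{p^{v_p(c)}}\big)^m$ and using multiplicativity yields the single factor $\big(\frac{a_p}{\Delta_{M_p}}\big)$, finishing the odd-$p$ case. For $p=2$ the same sorting applies, but Lemma \ref{gammacomp} now reads $\gamma(M(a))=\big(\frac{\Delta_M}{a}\big)\gamma(M)^a$; this is where the factor $\gamma(f_2)^{a_2-1}$ and the symbol $\big(\frac{\Delta_{M_2}}{a_2}\big)$ come from, and the oddity contribution $\zeta_8^{a_2c_2t}$ is reabsorbed through Eq. \eqref{oddmod8} and the explicit formula for $(x,y)_{\mathbb{Q}_2}$ to deliver the remaining signs $(-1)^{m\varepsilon(a_2)\varepsilon(c_2)}\big(\frac{2^{v_2(c)}}{a_2}\big)^m$.

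I expect the $p=2$ bookkeeping to be the main obstacle: the exponent $a_2$ sitting inside $\gamma(M)^{a_2}$, the coupling of the index $t$ (through $x_c$) with the sign $\varepsilon$, and the positive-$c_2$ convention all demand care, and one must check that the resulting $\prod_p\xi_p$ is compatible with the Weil reciprocity law $\prod_p\gamma(f_p)=\gamma(f_{\mathbb{R}})=\zeta_8^{sgn(M)}$. The degenerate case $c=0$ I would dispatch separately: there $a=d=\pm1$ and $D_M^{c*}=\{0\}$, so Corollary \ref{finloc} collapses to $\mathbf{e}\big(bd\frac{\gamma^2}{2}\big)e_{d\gamma}$, and the scalars $\overline{\gamma(af)}\gamma(f)$ multiply across primes to the appropriate power of $\gamma(f_{\mathbb{R}})$, in agreement with the formula under the stated conventions.
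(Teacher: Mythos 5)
Your proposal is, in architecture and in almost every detail, the paper's own proof: tensor decomposition over primes, the metaplectic scalars from Corollaries \ref{rhoMp} and \ref{rhoM2}, substitution of Corollary \ref{finloc} with the Gauss sum evaluated by Theorem \ref{M/cM} and the prefactor by Corollary \ref{xcnorm}, scalar matching via Lemma \ref{gammacomp}, and a separate dispatch of $c=0$. The assembly of the $\beta$-sum, the radical $\sqrt{\Delta_{M,c}}/\sqrt{\Delta_{M}}$, and the per-prime leftover into $\xi_{p}$ is exactly the argument the paper gives.

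Two loose ends remain. First, you never treat $a=0$: your entire matching step rewrites $\overline{\gamma(cf_{p})}$ in terms of Weil indices of components rescaled by $a_{p}c$, and $a_{p}$ is undefined when $a=0$. The paper patches this explicitly: $a=0$ forces $c=\pm1$, so every $\delta_{p}=1$ and the product over $pq|c$ is empty, and the expansion gives the same value for every choice of $a_{p}\in\mathbb{Z}_{p}^{*}$, so the asserted formula is independent of that choice (one can also undo the expansion and read off $\xi_{p}=\overline{\gamma(cf_{p})}$ directly). Second, your $p=2$ attributions are slightly off, though the machinery is right: the factor $\zeta_{8}^{a_{2}c_{2}t_{c}}$ from Corollary \ref{xcnorm} is not reabsorbed via Eq. \eqref{oddmod8}; it cancels against the conjugate Weil index of the rescaled component $(2^{2v_{2}(c)})^{\varepsilon n}_{t_{c}}(a_{2}c_{2})$, which Proposition \ref{gammap} evaluates to $\zeta_{8}^{a_{2}c_{2}t_{c}}$ because the power of 2 there is even (likewise, for odd $p$ the components with $v_{p}(q)=v_{p}(c)$ contribute Weil index exactly 1 after rescaling, not merely a sign). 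Similarly, both $\big(\frac{2^{v_{2}(c)}}{a_{2}}\big)^{m}$ and $\big(\frac{\Delta_{M_{2}}}{a_{2}}\big)$ fall out of a \emph{single} application of Lemma \ref{gammacomp} to $\overline{\gamma(cf_{2})}^{a_{2}}$, since the discriminant of $M_{2}(c)$ has cardinality $2^{mv_{2}(c)}\Delta_{M_{2}}$, after writing $\overline{\gamma(cf_{2})}=\gamma(cf_{2})^{a_{2}-1}\overline{\gamma(cf_{2})}^{a_{2}}$; the sign $(-1)^{m\varepsilon(a_{2})\varepsilon(c_{2})}$ then comes from a second application of Lemma \ref{gammacomp} to $\gamma(c_{2}f_{2})^{a_{2}-1}$ (replacing $c$ by $c_{2}$ is legitimate because $a_{2}-1$ is even, the Weil indices differing only by a sign), together with $\gamma(f_{2})^{4}=(-1)^{m}$. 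Eq. \eqref{RQ2Hilb} enters only in the $c=0$ verification, where the paper also uses the convention $c_{2}>0$ to kill the real Hilbert symbol. These are repairs of bookkeeping rather than of strategy: with them your plan reproduces the paper's proof.
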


\begin{proof}
Write $\mathbf{r}_{0}(A)e_{\gamma_{p}}$ as in Corollary \ref{finloc}, apply
Lemma \ref{xcnorm}, and evaluate the Gauss sum by Theorem \ref{M/cM}. This
yields
\[\omega_{p}\frac{\sqrt{\Delta_{M_{p},c}}}{\sqrt{\Delta_{M_{p}}}}\sum_{\beta_{p}
\in
D_{M_{p}}^{c*}}\chi_{p}\bigg(a\frac{\beta_{p,c}^{2}}{2}+b(\gamma_{p},\beta_{p}
)+bd\frac{\gamma_{p}^{2}}{2}\bigg)e_{\beta_{p}+d\gamma_{p}},\] where
$\omega_{p}$ is the factor $\prod_{pq|c}\gamma\big(q^{\kappa
n}_{/t/II}(a_{p}c)\big)$ appearing in Theorem \ref{M/cM}, times the factor from
Lemma \ref{xcnorm} for $p=2$. We may take the tensor product over all primes
$p$, since for $p$ not dividing $Nc$ (or $\Delta_{M}c$), $\rho_{M_{p}}$ is
1-dimensional and all the coefficients are 1. As
$\Delta_{M}=\prod_{p}\Delta_{M_{p}}$, $\Delta_{M,c}=\prod_{p}\Delta_{M_{p},c}$,
and $\beta \in D_{M}$ lies in $D_{M}^{c*}$ if and only if $\beta_{p} \in
D_{M_{p}}^{c*}$ for every $p$, this tensor product yields the asserted summation
and real constant. It thus remains to verify that multiplying $\omega_{p}$ by
the roots of unity distinguishing $\mathbf{r}_{0}(A)$ from $\rho_{M_{p}}(A)$
yields the asserted $\xi_{p}$.

In Section \ref{Meta} we evaluated $\rho_{M_{p}}$ as
$\rho_{M_{\mathbb{Q}_{p}}/\mathbb{Q}_{p}}\circ\iota$ for odd $p$ and
$\rho_{M_{\mathbb{Q}_{2}}/\mathbb{Q}_{2}} \circ i$ for $p=2$. The root of unity
appearing in $\rho_{M_{p}}\big(A,\theta\sqrt{j(A,\tau)}\big)$ is therefore
$\omega_{p}\big(\frac{a_{p}}{p^{v_{p}(c)}}\big)^{m}\overline{\gamma\big(M_{p}
(c)\big)}$ for odd $p$ and
$\omega_{2}\theta^{m}\big(\frac{a}{c_{2}}\big)^{m}\overline{\gamma\big(M_{2}
(c)\big)}$ for $p=2$. This proves the assertion for the case $a=0$, since then
$c=\pm1$ and therefore $\omega_{p}=1$ for all $p$,
$\big(\frac{a_{p}}{p^{v_{p}(c)}}\big)=1$ regardless of the value of $a_{p}$, and
$\big(\frac{a}{c_{2}}\big)=1$. Assume now $a\neq0$, and write
$\overline{\gamma\big(M_{2}(c)\big)}$ as
$\gamma\big(M_{2}(c)\big)^{a_{2}-1}\overline{\gamma\big(M_{2}(c)\big)}^{a_{2}}$.
By applying Lemma \ref{gammacomp} to $M_{p}(c)$ (whose discriminant has
cardinality $p^{mv_{p}(c)}\Delta_{M_{p}}$), we can replace
$\overline{\gamma\big(M_{p}(c)\big)}$ for odd $p$ and
$\overline{\gamma\big(M_{2}(c)\big)}^{a_{2}}$ for $p=2$ by
$\big(\frac{a_{p}}{p^{mv_{p}(c)}\Delta_{M_{p}}}\big)\overline{\gamma\big(M_{p}
(a_{p}c)\big)}$ and
$\big(\frac{2^{mv_{2}(c)}\Delta_{M_{2}}}{a_{2}}\big)\overline{\gamma\big(M_{2}
(a_{2}c)\big)}$ respectively. For odd $p$ we now cancel the two
$\big(\frac{a_{p}}{p^{v_{p}(c)}}\big)^{m}$ factors. On the other hand, for $p=2$
we can replace $\gamma\big(M_{2}(c)\big)^{a_{2}-1}$ by
$\gamma\big(M_{2}(c_{2})\big)^{a_{2}-1}$, as the Weil indices differ by a sign
and $a_{2}-1$ is even. Applying Lemma \ref{gammacomp} again and evaluating
$\gamma(M_{2})^{(a_{2}-1)(c_{2}-1)}$ transforms
$\gamma\big(M_{2}(c_{2})\big)^{a_{2}-1}$ into
$\gamma(M_{2})^{a_{2}-1}(-1)^{m\varepsilon(a_{2})\varepsilon(c_{2})}$. For
either odd or even $p$ we now decompose
$\overline{\gamma\big(M_{p}(a_{p}c)\big)}$ as the product of the Weil indices of
the Jordan components of $M_{p}(a_{p}c)$, which cancels with $\omega_{p}$
leaving the asserted product over $q$ not dividing $c$. It only remains to check
that the part with $q||c$ cancels as well. Here we have
$\overline{\gamma\big((p^{2v_{p}(c)})^{\kappa n}_{/t_{c}/II}(a_{p}c_{p})\big)}$
for any $p$, together with the term $\zeta_{8}^{a_{2}c_{2}t_{c}}$ for $p=2$.
Since the power of $p$ is even in this component, Proposition \ref{gammap} shows
that this complex conjugate Weil index is 1 for odd $p$ and cancels with
$\zeta_{8}^{a_{2}c_{2}t_{c}}$ from Lemma \ref{xcnorm} for $p=2$. This completes
the proof for the case where $c\neq0$.

For $c=0$ we have $D_{M}^{0*}=\{0\}$ with $\frac{\beta_{0}^{2}}{2}=0$,
$\Delta_{M,0}=\Delta_{M}$, and $a=d=\pm1$, so that the asserted formula becomes
$\prod_{p}\xi_{p}\cdot\mathbf{e}\big(\pm
b\frac{\gamma^{2}}{2}\big)e_{\pm\gamma}$. As $\mathbf{r}_{0}(A)$ comes with no
root of unity and the representations
$\rho_{M_{\mathbb{Q}_{p}}/\mathbb{Q}_{p}}\circ\iota$ and
$\rho_{M_{\mathbb{Q}_{2}}/\mathbb{Q}_{2}} \circ i$ yield the asserted
coefficients $\xi_{p}$ (recall the value of $\gamma(M_{p})^{2}$ for odd $p$ and
$a=-1$), this completes the proof of the theorem.
\end{proof}

In order to avoid the choice of a branch of $\sqrt{j(A,\tau)}$, we may replace
$\theta$ by the sign of $\Re\sqrt{j(A,\tau)}$ if $c\neq0$. If $c=0$ then
$\theta\sqrt{j(A,\tau)}$ is a constant $\delta\in\{\theta,-i\theta\}$ (by the
choice of branch of the square root), and as $\xi_{p}=\gamma(M_{p})^{1-a}$ for
odd $p$ in this case, the Weil reciprocity law implies that $\prod_{p}\xi_{p}$
coincides with $\delta^{-sgn(M)}$ for all the 4 cases of $\delta$. For the
principal branch of $\sqrt{j(A,\tau)}$ this agrees with the numbers denoted
$\xi_{\pm1,0}$ in \cite{[Str]}.

The reader who wishes to compare our results with those of \cite{[Sche]} and
\cite{[Str]} must be warned that \cite{[Sche]} works with the complex conjugate
representation. Moreover, our conventions for the Legendre symbols are
different, and in fact our roots of unity $\xi_{p}$ does not coincide with
theirs if $c\neq0$. Indeed, the ratio between our $\xi_{p}$ and the
corresponding coefficient of \cite{[Sche]} and \cite{[Str]} is
$\big(\frac{-1}{\Delta_{M_{p}}}\big)$ for odd $p$ and
$\overline{\gamma(M_{2})}^{2}$ for $p=2$. However, the global factor $i^{-sgnM}$
of \cite{[Str]} covers precisely for all these differences (the Weil reciprocity
law again), so that our final results do agree. Lemma \ref{gammacomp} can be
useful when one verifies the details of this comparison.

\medskip

Let $\widetilde{N}$ be the least common multiple of the orders of all the
elements of $D_{M}$. The level $N$ is $\widetilde{N}$ if the Jordan component
$(2^{v_{2}(\widetilde{N})})^{\kappa n}_{t/II}$ of $M_{2}$ has index $II$, and
equals $2\widetilde{N}$ otherwise. The latter case occurs only if
$\widetilde{N}$ is already even. Now, we have $\Delta_{M}^{c}=\{0\}$ if and only
if $N|c$, a case in which we have
$\rho_{M}\big(A,\theta\sqrt{j(A,\tau)}\big)e_{\gamma}=\varphi(A,\theta\sqrt{j(A,
\tau)})\mathbf{e}\bigg(bd\frac{\gamma^{2}}{2}\bigg)e_{d\gamma}$, where $\varphi$
is the product $\prod_{p}\xi_{p}$. Moreover, the product appearing in the
definition of $\xi_{p}$ in Theorem \ref{final} is empty in this case. In
addition, Lemma \ref{pfin} implies that either $\Delta_{M_{p}}=1$ or $a_{p}=a$
(coprimality). $\varphi\big(A,\theta\sqrt{j(A,\tau)}\big)$ therefore equals
\begin{equation}
\theta^{m}\bigg(\frac{a}{c_{2}}\bigg)^{m}(-1)^{m\varepsilon(a_{2})\varepsilon(c_
{2})}\bigg(\frac{2^{v_{2}(c)}}{a_{2}}\bigg)^{m}\bigg(\frac{\Delta_{M_{2}}}{a_{2}
}\bigg)\gamma(M_{2})^{a_{2}-1}\bigg(\frac{a}{\Delta_{M,2}}\bigg) \label{char}
\end{equation}
wherever $c\neq0$ (the case $a=0$ does not appear unless $N=1$ and $\rho_{M}$ is
trivial). This coefficient is just $\delta^{-sgn(M)}$ if $c=0$ and
$\theta\sqrt{j(A,\tau)}=\delta$. Examining the action on $e_{0}$ shows that the
map $\varphi$ is a character of the inverse image of $\Gamma_{0}(N)$ in
$Mp_{2}(\mathbb{Z})$. Let $\Gamma$ denote the subgroup of $\Gamma_{0}(N)$
defined by the congruences $N|b$ and $a \equiv d\equiv1(\mathrm{mod\
}\widetilde{N})$ (it contains $\Gamma(N)$ as a subgroup of index
$\frac{N}{\widetilde{N}}$). As the condition $\Delta_{M}^{c}=\{0\}$ is necessary
for an element of $Mp_{2}(\mathbb{Z})$ to be in $\ker\rho_{M}$, we find that
$\ker\rho_{M}$ consists of those elements of $\ker\varphi$ lying over $\Gamma$.
Explicitly, we get
\begin{prop}
The kernel of $\rho_{M}$ is a normal subgroup of $Mp_{2}(\mathbb{Z})$ which lies
over $\Gamma$, except for a few cases in which $\Gamma(N)\lneq\Gamma$ and it
lies over $\Gamma(N)$. These cases are $(i)$ $2||\widetilde{N}$ and
$\gamma(M_{2})^{2}\neq1$ (which always holds for odd $m$), and $(ii)$ $m$ is
even, $4||\widetilde{N}$, and $v_{2}(\Delta_{M})$ is odd. This kernel is a
double cover of $\Gamma$ or $\Gamma(N)$ respectively if $m$ is even, and it is a
lift of $\Gamma$ or of $\Gamma(N)$ if $m$ is odd. \label{kernel}
\end{prop}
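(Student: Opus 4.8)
The plan is to combine the explicit formula of Theorem \ref{final} with the factoring statement of Theorem \ref{WrepfacN}, pinning down $\ker\rho_M$ as the locus on which the character $\varphi$ introduced above is trivial. First I would show that every kernel element projects into $\Gamma$. Suppose $\rho_M\big(A,\varepsilon\sqrt{j(A,\tau)}\big)=\mathrm{Id}$. If $c=0$ then $a=d=\pm1$ and the action sends $e_\gamma$ to a scalar times $e_{\pm\gamma}$; being the identity forces $a=d=1$ and $N\mid b$ (the exceptional $a=d=-1$ can occur only if $\widetilde{N}\mid2$, and still lands in $\Gamma$). If $c\neq0$, Theorem \ref{final} writes $\rho_M(A)e_\gamma$ as a multiple of $\sum_{\beta\in D_M^{c*}}(\cdots)e_{\beta+d\gamma}$; since the summands are distinct basis vectors and $D_M^{c*}$ is a coset of $D_M^{c}$, the image can be a single basis vector only when $D_M^{c}=\{0\}$, i.e. $\widetilde{N}\mid c$. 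In that case $D_M^{c*}=\{x_c\}$, and comparison with $\mathrm{Id}$ forces $x_c=0$ and $\widetilde{N}\mid(d-1)$; by the determination of $x_c$ (it is nonzero precisely when the component at $2^{v_2(c)}$ is odd) this already gives $N\mid c$, whence $ad\equiv1\ (\mathrm{mod}\ N^2)$, $d$ is invertible modulo $N$, and the surviving phase $\mathbf{e}\big(bd\gamma^2/2\big)\equiv1$ forces $N\mid b$ and then $a\equiv d\equiv1\ (\mathrm{mod}\ \widetilde{N})$. Thus $A\in\Gamma$, and on $\pi^{-1}(\Gamma)$ the operator is the scalar $\varphi\big(A,\varepsilon\sqrt{j(A,\tau)}\big)\cdot\mathrm{Id}$, so $\ker\rho_M\cap\pi^{-1}(\Gamma)$ is exactly the set where $\varphi=1$.

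Next, since Theorem \ref{WrepfacN} places (the appropriate lift or preimage of) $\Gamma(N)$ inside $\ker\rho_M$, and $\Gamma$ contains $\Gamma(N)$ with index $N/\widetilde{N}\in\{1,2\}$ with both groups normal, the kernel must lie over either $\Gamma$ or $\Gamma(N)$, the choice being governed by whether $\varphi$ is trivial on the quotient $\Gamma/\Gamma(N)$. When $N=\widetilde{N}$ the two groups agree and the kernel lies over $\Gamma$. When $N=2\widetilde{N}$ (the component at $2^{v_2(\widetilde{N})}$ odd) I would evaluate $\varphi$ on a genuine representative of the nontrivial coset, e.g. a matrix with $a\equiv1+\widetilde{N}\ (\mathrm{mod}\ N)$ and $c=N$. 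For $m$ even all the $m$-th power Legendre and real Hilbert factors of $\varphi$ disappear; using $v_2(\Delta_M)\equiv m\pmod2$ when $2\|\widetilde{N}$, together with the fact that $\Delta_{M,2}$ and $\widetilde{N}$ have the same odd prime divisors (so that $\big(\frac{1+\widetilde{N}}{\Delta_{M,2}}\big)=1$), the value collapses to $\gamma(f_2)^{\widetilde{N}}\big(\frac{2}{1+\widetilde{N}}\big)^{v_2(\Delta_M)}$. Writing $\widetilde{N}=2^{k}\widetilde{N}_{\mathrm{odd}}$ and invoking $\gamma(f_2)^2=\pm1$, $\gamma(f_2)^4=(-1)^m$ (from Proposition \ref{gammap} and Lemma \ref{gammacomp}), this equals $\gamma(f_2)^2$ for $k=1$, equals $(-1)^{v_2(\Delta_M)}$ for $k=2$, and equals $1$ for $k\geq3$. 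These are precisely the cases $(i)$ and $(ii)$ in which $\varphi\neq1$ on the coset and the kernel drops to lie over $\Gamma(N)$.

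For $m$ odd, Corollary \ref{odd4divN} gives $4\mid N$, and I would use the explicit normal lift of $\Gamma(4)$ from Corollary \ref{Gamma4nMp2Z} (which, by the uniqueness of the lift discussed after that corollary, agrees with the lift of $\Gamma(N)$ inside $\ker\rho_M$) to fix the metaplectic sign $\varepsilon$ on the coset representative. Its sign $\big(\frac{2}{a}\big)^{v_2(c)}\big(\frac{a}{c_2}\big)$ cancels exactly the $c$-dependent metaplectic factors of $\varphi$, and after an application of the reciprocity identity \eqref{RQ2Hilb} one is again left with a representative-independent expression whose only nontrivial case is $k=1$, where $\gamma(f_2)^2\neq1$ holds automatically; this recovers case $(i)$ for odd $m$. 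Finally, the double-cover versus lift dichotomy is read off from $\rho_M(Z^2)=(-1)^m$: the two preimages of a given $A$ carry $\varphi$-values differing by $(-1)^m$, so for $m$ even both lie in $\ker\rho_M$ (a double cover of $\Gamma$ or $\Gamma(N)$) while for $m$ odd exactly one does (a normal lift), normality being automatic since $\ker\rho_M$ is the kernel of a homomorphism.

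The hard part will be the $m$ odd coset computation, where the metaplectic sign cannot be dropped but must be tracked honestly through Corollary \ref{Gamma4nMp2Z} and \eqref{RQ2Hilb} to verify that the $c$-dependence of $\varphi$ cancels, so that the answer depends only on the intrinsic invariants $k=v_2(\widetilde{N})$, $v_2(\Delta_M)$, and $\gamma(f_2)$. By comparison the $m$ even case is essentially mechanical once the odd-prime Legendre symbols are disposed of by reciprocity and the parity constraint $v_2(\Delta_M)\equiv m\pmod2$ (for $2\|\widetilde{N}$) is in hand.
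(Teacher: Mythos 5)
Your proposal follows essentially the same route as the paper: reduce membership in $\ker\rho_{M}$ to triviality of the character $\varphi$ on the preimage of $\Gamma$, note that (the appropriate cover or lift of) $\Gamma(N)$ already sits in the kernel, and then decide between $\Gamma$ and $\Gamma(N)$ by evaluating $\varphi$ on the nontrivial coset of $\Gamma(N)$ in $\Gamma$, split according to $v_{2}(\widetilde{N})\in\{1,2,\geq3\}$. Your first paragraph (the image is supported on the coset $D_{M}^{c*}$ of $D_{M}^{c}$ with nonzero coefficients, forcing $D_{M}^{c}=\{0\}$, then $x_{c}=0$ hence $N\mid c$ by the explicit determination of $x_{c}$, then $N\mid b$ and $a\equiv d\equiv1\ (\mathrm{mod\ }\widetilde{N})$) and your even-$m$ coset computation (value $\gamma(f_{2})^{2}$ for $v_{2}(\widetilde{N})=1$ using $v_{2}(\Delta_{M})\equiv m\ (\mathrm{mod\ }2)$, $(-1)^{v_{2}(\Delta_{M})}$ for $v_{2}(\widetilde{N})=2$, and $1$ for $v_{2}(\widetilde{N})\geq3$) reproduce the paper's argument exactly; quoting Theorem \ref{WrepfacN} instead of re-deriving $\varphi|_{\Gamma(N)}\in\{(\pm1)^{m}\}$ is a legitimate shortcut since the Section \ref{Factor} proof is independent, though the paper's re-derivation is what later yields the lattice-independence of the lift of $\Gamma(N)$.

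The one step that would fail as written is your odd-$m$ mechanism. You propose to fix the metaplectic sign on a coset representative via the lift of Corollary \ref{Gamma4nMp2Z}, but that lift is only defined on $\Gamma_{1}(4)$, and in the decisive case $2||\widetilde{N}$ the representatives of $\Gamma\setminus\Gamma(N)$ satisfy $a\equiv1+\widetilde{N}\equiv3\ (\mathrm{mod\ }4)$, so they lie outside $\Gamma_{1}(4)$ and there is no sign to borrow. Moreover, the ``uniqueness of the lift of $\Gamma(N)$'' you invoke is precisely the forward-referenced claim that the paper establishes by means of this proposition's proof, so relying on it here is circular. Fortunately the maneuver is unnecessary: since $\varepsilon$ ranges freely over $\pm1$ and every $m$-th power factor of $\varphi$ is a sign, some lift of $A$ lies in $\ker\rho_{M}$ if and only if $\big(\frac{\Delta_{M_{2}}}{a}\big)\gamma(f_{2})^{a-1}$ is real --- this is exactly how the paper argues. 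For $2||\widetilde{N}$ and odd $m$ one has $\frac{a-1}{2}$ odd and $\gamma(f_{2})^{2}\in\{\pm i\}$ (as $\gamma(f_{2})^{4}=(-1)^{m}=-1$), so the value is $\pm i$ and no lift works, recovering case $(i)$; for $4||\widetilde{N}$ and odd $m$ the value is $\pm1$ and the right choice of $\varepsilon$ keeps the kernel over $\Gamma$. With this replacement your outline becomes a complete proof.
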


In particular, Proposition \ref{kernel} implies the factoring of $\rho_{M}$
through (a double cover of) $SL_{2}(\mathbb{Z}/N\mathbb{Z})$. For the proof,
note that only the parts
$\big(\frac{\Delta_{M_{2}}}{a_{2}}\big)\gamma(M_{2})^{a_{2}-1}$ of Eq.
\eqref{char} require consideration. Furthermore, these are trivial wherever
$8|\widetilde{N}$, leaving very few possible forms for $M_{2}$ which we must
check. One first verifies that this product is 1 for elements of $\Gamma(N)$
(Eq. \eqref{oddmod8} is needed for the case $4||N$, and we need the Weil
reciprocity law for the case $c=0$). It remains to treat the cases where
$N=2\widetilde{N}$, the matrix in question is in $\Gamma\setminus\Gamma(N)$, and
either $2||\widetilde{N}$ and $4||N$ or $4||\widetilde{N}$ and $8||N$. A similar
case by case check completes the verification. We remark that if $m$ is odd then
$\ker\rho_{M}$ always lies over a subgroup of $\Gamma(4)$, and the lift is
always the restriction of $(i^{-1}\circ\iota)\cdot\psi^{v_{2}(\Delta_{M})+1}$,
where $\psi:\Gamma(4)\to\{\pm1\}$ takes $\binom{a\ \ b}{c\ \ d}$ to
$\big(\frac{2}{a}\big)$

\section{Odd Lattices and Further Generalizations \label{OddGen}}

In this Section we consider the changes we have to introduce if we take the
lattice $M$ to be odd rather than even, and describe briefly further possible
generalizations.

\medskip

The formulae in \cite{[Sn]} do not assume that the lattice is even. Instead, let
$\Gamma_{odd}$ denote the group $\Gamma(2) \cup S\Gamma(2)$ (or equivalently
$\Gamma_{0}^{0}(2) \cup S\Gamma_{0}^{0}(2)$) consisting of those matrices
$\binom{a\ \ b}{c\ \ d} \in SL_{2}(\mathbb{Z})$ in which both $ab$ and $cd$, or
equivalently both $ac$ and $bd$, are even. Condition (1.21) in \cite{[Sn]} means
that if the lattice is odd then the matrix lies in $\Gamma_{odd}$. We denote the
inverse image of $\Gamma_{odd}$ in $Mp_{2}(\mathbb{Z})$ by
$\widetilde{\Gamma}_{odd}$. It is generated by $T^{2}$ and $S$, and has index 3
in $Mp_{2}(\mathbb{Z})$ (just like $\Gamma_{odd}$ in $SL_{2}(\mathbb{Z})$). Now,
if $M$ is an odd lattice then $D_{M}$ does not carry a quadratic form, and the
construction from Section \ref{LatWRep} does not work. However, a careful
investigation of our proof shows that in this case the process of Sections
\ref{Local} and \ref{Main} yields a representation of $\Gamma_{odd}$, defined by
the same formulae from Theorem \ref{final}. Before we remark on this, observe
that in a more general setting the distinction is finer than just ``even'' and
``odd'' lattices: Given a lattice $M$ over a any ring $R$, define $I$ to be the
ideal generated by $x^{2}$ for all $x \in M$, and let $J=(2R:I)$ be the ideal
containing all those elements $r \in R$ such that $2|rs$ for any $s \in I$. The
ideal $J$ can be any ideal between $R$ (for even lattices) and $2R$ (the case of
``purely odd'' lattices). The corresponding subgroup of $SL_{2}(R)$ is
$\Gamma_{odd}^{J}=\Gamma_{0}^{0}(J) \cup S\Gamma_{0}^{0}(J)$, consisting of
those matrices in which $ab$ and $cd$ are in $J$. In the case $R=\mathcal{O}$
(with uniformizer $\pi$) considered in Section \ref{Local}, $I$ can be any
non-zero ideal $\pi^{t}\mathcal{O}$ in $\mathcal{O}$ (with some
$t\in\mathbb{N}$), and then $J=\pi^{\max\{0,v(2)-t\}}\mathcal{O}$. Having this
said, the reader who carries out the arguments from Sections \ref{Local} and
\ref{Main} should be aware that if $c \not\in J$ (which may happen only if
$2\not\in\mathcal{O}^{*}$) then $D_{M}^{c*}$ is not well-defined, as the subset
of $M_{\mathbb{K}}$ defined by the same property is no longer contained in
$D_{M}$ (if $c\in1+J$ then this yields the \emph{shadow} of $M$). However, for
matrices in $\Gamma_{odd}^{J}$ this happens only for $c\in\mathcal{O}^{*}$,
where $D_{M}^{c}=D_{M}$ and Lemma \ref{DMc*} is trivial. The fact that $a \in J$
in this case makes all the arguments (including Theorem \ref{M/cM}) work if we
choose $\tilde{x}_{c}$ to be 0, except for the factor from Lemma \ref{xcnorm}
being replaced by 1.

Our argument now proves the assertion in the first part of Proposition 1.6 of
\cite{[Sn]} in full generality. Now, for an odd lattice the inverse image of the
group $\Gamma$ from Proposition \ref{kernel} is contained in
$\Gamma_{0}^{0}(N)\subseteq\Gamma_{0}^{0}(2)\subseteq\Gamma_{odd}$. Let
$\widetilde{\Gamma}$ be the index 2 subgroup of $\Gamma(N)$ in which the
diagonal entries are congruent to 1 modulo $2N$. If the unimodular component in
a Jordan decomposition of the odd lattice $M_{2}$ is $1^{\kappa n}_{t}$ then the
previous paragraph and considerations like the one proving Proposition
\ref{kernel} establish
\begin{thm}
For an odd lattice $M$ the formulae from Theorem \ref{final} define a Weil
representation $\rho_{M}$ of $\Gamma_{odd}$ on $\mathbb{C}[D_{M}]$, except that
for odd $c$ we replace $\Delta_{M}^{c*}$ by $\Delta_{M}^{c}$ and add a
multiplier of $\zeta_{8}^{-a_{2}c_{2}t}$ to $\xi_{2}$. The kernel of $\rho_{M}$
lies over the group $\Gamma$ described before Proposition \ref{kernel}, except
for the following cases: $(i)$ $4||\widetilde{N}=\frac{N}{2}$, $m$ is even, and
$v_{2}(\Delta_{M})$ is odd; $(ii)$ $2||\widetilde{N}=\frac{N}{2}$ and $m$ is
odd; $(iii)$ $2||\widetilde{N}=\frac{N}{2}$, $m$ and $v_{2}(\Delta_{M})$ are
even, and $\gamma(M_{2})^{2}\neq1$; $(iv)$ $4||\widetilde{N}=N$, $m$ is even,
and $v_{2}(\Delta_{M})$ is odd; $(v)$ $2||N$ and $\gamma(M_{2})^{2}\neq1$;
$(vi)$ $2||\widetilde{N}=\frac{N}{2}$, $m$ is even, and $v_{2}(\Delta_{M})$ is
odd; $(vii)$ $\widetilde{N}$ is odd and $\gamma(M_{2})=1$. In cases
$(i)$--$(iii)$ the kernel of $\rho_{M}$ lies over $\Gamma(N)\lneq\Gamma$. In
cases $(iv)$ and $(v)$ it lies over $\widetilde{\Gamma}$. In case $(vi)$ it lies
over one of the two index 2 subgroups of $\Gamma$ which contain
$\widetilde{\Gamma}$ and do not equal $\Gamma(N)$. Finally, in case $(vii)$ it
is the inverse image of $\Gamma(\widetilde{N})\cap\Gamma_{odd}$, which strictly
contains $\Gamma$. In any case this kernel is a double cover of that group if
$m$ is even and it is a lift of it if $m$ is odd. \label{oddfin}
\end{thm}

The additional multiplier for $\xi_{2}$ where $c$ is odd appears in Theorem
\ref{oddfin} since the factor from Lemma \ref{xcnorm} no longer cancels the
corresponding Weil index. Observe that the case $N=2\widetilde{N}$ for odd
$\widetilde{N}$, in which $\Gamma=\Gamma(N)$, is now allowed. However, elements
of $\Gamma(\widetilde{N})\cap\Gamma_{odd}$ with odd $c$ (for which we have to
put the additional factor in $\xi_{2}$ in Eq. \eqref{char}) must also be
considered, since $\Delta_{M}^{c*}$ is replaced by $\Delta_{M}^{c}$ in the
formula for odd $c$. Note that in the cases $(iv)$--$(vi)$ of Theorem
\ref{oddfin} $\rho_{M}$ does not factor through the image of
$\widetilde{\Gamma}_{odd}$ in the double cover of
$SL_{2}(\mathbb{Z}/N\mathbb{Z})$. If $m$ is odd then the lift is again
$(i^{-1}\circ\iota)\cdot\psi^{v_{2}(\Delta_{M})+1}$, except in case $(v)$ of
Theorem \ref{oddfin} where the group is not contained in $\Gamma(4)$. We remark
that for odd lattices with odd $\Delta_{M}$, Lemma \ref{pfin} breaks down for
$p=2$ since $N$ is even. In addition, $\rho_{M_{2}}(S)$ acts as the scalar
$\overline{\gamma(M_{2})}=\zeta_{8}^{-t}$, which may or may not be trivial. As
for theta functions of odd lattices, the $V_{\rho_{M}}$-valued generalized theta
function from \cite{[B1]} is also modular, but now with respect to
$\widetilde{\Gamma}_{odd}$, as the argument from Theorem 4.1 of that reference
(with $T^{2}$ in place of $T$) shows. This extends the well-known modularity
property of the classical theta function
$\theta(\tau)=\sum_{n}\mathbf{e}\big(\frac{n^{2}\tau}{2}\big)$, which is a
special case of this more general function.

\medskip

The generality of the results of Section \ref{Local} suggests that it may be
possible to extend Theorem \ref{final} to lattices over the rings of integers in
other number fields. This method avoids examining the structure of
$Mp_{2}(\mathcal{O})$ for such a ring $\mathcal{O}$ (which may be complicated).
However, to carry on this task, we need to choose the characters on the local
fields properly: Recall that we have used both the product formula
$\mathbf{e}(x)=\prod_{p}\chi_{p}(x)$ and the fact that $\ker\rho_{p}$ was
precisely $\mathbb{Z}_{p}$ for every $p$. Finding characters for another number
field $\mathbb{F}$ which satisfy both properties is not so easy (since every
number field other that $\mathbb{Q}$ has a non-trivial discriminant). However,
one way to overcome this difficulty is to use the canonical choice of composing
$\chi_{p}$ with the trace to $\mathbb{Q}_{p}$ but allowing the bilinear forms of
$\mathcal{O}$-lattices to take values in the inverse different (and defining
$M^{*}$ accordingly). We remark that our results extend to the case of global
fields which are function fields (of characteristic $p\neq2$), where every
lattice is even and $\rho_{M}$ is always a (Weil) representation of
$SL_{2}(\mathcal{O})$.

In order to illustrate Theorem \ref{oddfin}, we give the formulae for the Weil
representation corresponding to an odd \emph{unimodular} lattice. In this case
$\rho_{M_{p}}$ is trivial for any odd $p$, $M_{2}\cong1^{\kappa m}_{t}$, and
$\gamma(M_{2})=\zeta_{8}^{sgn(M)}$ (so that
$t=sgn(M)\in\mathbb{Z}/8\mathbb{Z}$). The coefficients
$\frac{\sqrt{\Delta_{M,c}}}{\sqrt{\Delta_{M}}}$ and the roots of unity
depending on $\beta$ and $\gamma$ in the formula from Theorem \ref{final}
reduce to 1, and the representation is just the character
$\prod_{p}\xi_{p}=\xi_{2}$. Note that $m \equiv t(\mathrm{mod\ }2)$, so that we
may replace every instance of $m$ in the formula (which is always an exponent of
$\pm1$) by $t$. We therefore obtain the $t$th power of the character $\xi$ which
sends $\big(A,\theta\sqrt{j(A,\tau)}\big)\in\widetilde{\Gamma}_{odd}$ with
$A=\binom{a\ \ b}{c\ \ d}$ to $\theta\zeta_{8}^{1-a}$ if $c=0$,
$\frac{\theta}{\zeta_{8}^{c}}$ if $a=0$,
$\theta\big(\frac{a}{c_{2}}\big)(-1)^{\varepsilon(a_{2})\varepsilon(c_{2})}
\big(\frac{2^{v_{2}(c)}}{a_{2}}\big)\zeta_{8}^{a_{2}-1}$ when $ac\neq0$ and $c$
is even, and
$\theta\big(\frac{a}{c_{2}}\big)(-1)^{\varepsilon(a_{2})\varepsilon(c_{2})}
\big(\frac{2^{v_{2}(c)}}{a_{2}}\big)\zeta_{8}^{a_{2}-1-a_{2}c_{2}}$ when
$ac\neq0$ and $c$ is odd. $\xi$ has exponent 8 (hence its $t$th power is
well-defined), and we recall that for an even lattice we have $t=0$ and
$\xi^{t}$ is trivial. We remark that most of the non-trivial powers of $\xi$ are
not restrictions of characters of $Mp_{2}(\mathbb{Z})$. This is so, since the
Abelianization of $Mp_{2}(\mathbb{Z})$ is cyclic of order 24, in which $S$
coincides with $T^{-3}$ and has order 8 (this follows directly from the relation
$(ST)^{3}=S^{2}$), and its character group is generated by the character
$\varepsilon$ appearing in the transformation formula for the Dedekind $\eta$
function (indeed, $\varepsilon(T)=\mathbf{e}\big(\frac{1}{24}\big)$). As
$\xi(T^{2})=1$, only the trivial character of $Mp_{2}(\mathbb{Z})$ and the 12th
power of $\varepsilon$ (which restricts to $\xi^{4}$) extend powers of $\xi$.

\medskip

A more difficult problem arises from the fact that we have used the image of
$Mp_{2}(\mathbb{Z})$ in the (unique) infinite place of $\mathbb{Q}$, while the
technicalities of the double cover were pushed to the (unique) even place of
$\mathbb{Q}$. This has no immediate generalization, especially to the totally
complex case, where the Weil representation factors through $SL_{2}$ in every
complex place while we expect the global Weil representation of
$Mp_{2}(\mathcal{O})$ not to factor through $SL_{2}(\mathcal{O})$ for lattices
of odd rank. A possible solution to this problem lies in the next paragraph. We
leave the more detailed analysis for future work.

Another interesting question may be to extend our results for obtaining the
explicit formulae for the action of larger subgroups of (covers of) the
symplectic group $Sp(M \times M)$ preserving the anti-symmetrization of the
bilinear form on $M \times M$ for an even $\mathbb{Z}$-lattice $M$. It is
reasonable to expect that the compact subgroup $Sp(M_{p} \times M_{p})$ of
$Sp_{\mathbb{Q}_{p}}(M_{\mathbb{Q}_{p}} \times M_{\mathbb{Q}_{p}})$ still
preserves the finite-dimensional space of $\mathcal{S}(M_{\mathbb{Q}_{p}})$
from Section \ref{Decom} and that it acts trivially if the prime $p$ satisfies
the conditions of Lemma \ref{pfin}. It should then be possible to combine the
methods of Section \ref{Meta} and the ideas of \cite{[Ra]} in order to obtain
the metaplectic double covers in our terminology, which hopefully splits again
over $Sp(M_{p} \times M_{p})$ for odd $p$. We thus conjecture that a similar
tensor product argument yields a representation of a double cover of $Sp(M
\times M)$, whose restriction to $Mp_{2}(\mathbb{Z})$ is our $\rho_{M}$.
Applying the methods of Sections \ref{Local} and \ref{Main} to this case,
combined with the formulae of \cite{[Ra]}, one may obtain the general formulae
for this representation. We remark that the paper \cite{[Sh]} gives, using theta
functions again, some formulae for a similar action of elements in a symplectic
generalization of $\Gamma_{0}(N)$ on such spaces (see Propositions 3b.1 and 3b.2
of that reference). In particular, if $\mathcal{O}$ is the ring of integers of a
global field and $M$ is an $\mathcal{O}$-lattice (with a bilinear form taking
values in the inverse different, say), determining the action of the subgroup
$Mp_{2}(\mathcal{O})$ of the double cover of $Sp(M \times M)$ should be a
feasible task (since again Eqs. \eqref{Wc=0} and \eqref{Wcinv} suffice). All
this, however, is the suggested subject for future work.

\noindent\textsc{Fachbereich Mathematik, AG 5, Technische Universit\"{a}t
Darmstadt, Schlossgartenstrasse 7, D-64289, Darmstadt, Germany}

\noindent E-mail address: zemel@mathematik.tu-darmstadt.de


\begin{thebibliography}{}{}

\bibitem[B1]{[B1]} Borcherds, R. E., \textsc{Automorphic Forms with
    Singularities on Grassmannians}, Invent. Math. 132, 491--562 (1998).
\bibitem[B2]{[B2]} Borcherds, R. E., \textsc{The
    Gross--Kohnen--Zagier Theorem in Higher Dimensions}, Duke Math J., vol 97
    no. 2, 219--233 (1999). Correction: Duke Math J., vol 105 no. 1, 183--184
    (2000).
\bibitem[B3]{[B3]} Borcherds, R. E., \textsc{Reflection Groups of Lorentzian
    Lattices}, Duke Math J., vol 104 no. 2, 319--366 (2000).
\bibitem[Boy]{[Boy]} Boylan, H., \textsc{Finite Quadratic Modules over Number
    Fields and their Associated Weil Representations}, RIMS Kokyuroko (Proc.
    RIMS), vol 1871, 125--136 (2013).
\bibitem[Br]{[Br]} Bruinier, J. H., \textsc{Regularized Theta Lifts for
    Orthogonal Groups over Totally Real Fields}, J. reine angew. Math, vol 672,
    177--222 (2012).
\bibitem[BS]{[BS]} Bruinier, J. H., Stein, O., \textsc{The Weil Representation
    and Hecke Operators for Vector Valued Modular Forms}, Math. Z., vol 264
    249--270 (2010).
\bibitem[C]{[C]} Cartier, P., \textsc{\"{U}ber einige Integralformeln in der
    Theorie der Quadratische Formen}, Math. Zeitschr., vol 84, 93--100 (1964).
\bibitem[Ge]{[Ge]} Gelbart, S. S., \textsc{Weil's Representation and the
    Spectrum of the Metaplectic Group}, Lecture Notes in Mathematics 530 (1970).
\bibitem[J]{[J]} Jones, B. W., \textsc{A Canonical Quadratic Form for the Ring
    of 2-adic integers}, Duke Math J., vol 11, 715--727, (1944).
\bibitem[Kl]{[Kl]} Kloosterman, H. D., \textsc{The Behaviour of General Theta
    Functions under the Modular Group and the Characters of Binary Modular
    Congruence Groups I}, Ann. of Math., vol 47 no. 2, 317--375 (1946).
\bibitem[Ku1]{[Ku1]} Kubota, T., \textsc{Topological Covering of $SL(2)$ over a
    Local Field}, J. Math. Soc. Japan, vol 19, 114--121 (1967).
\bibitem[Ku2]{[Ku2]} Kubota, T., \textsc{On Automorphic Functions and the
    Reciprocity Law in a Number Field}, Lectures in Mathematics 2, Kyoto
    University, 65pp, (1969).
\bibitem[MH]{[MH]} Milnor, J., Husemoller, D., \textsc{Symmetric bilinear
    forms}, Ergebnisse der Mathematik und ihrer Grenzgebiete 73,
    Springer--Verlag, 146pp, (1973).
\bibitem[N]{[N]} Nikulin, V. V., \textsc{Integer Symmetric Bilinear Forms and
    Some of their Geometric Applications}, Math. USSR Izv., vol 14, 103--167,
    (1980).
\bibitem[No]{[No]} Nobs, A., \textsc{Die irreduziblen Darstellungen von
    $GL_{2}(\mathbb{Z}_{p})$, insbesondere $GL_{2}(\mathbb{Z}_{2})$}, Math.
    Ann., vol 229 issue 2, 113--133 (1977).
\bibitem[NW1]{[NW1]} Nobs, A., Wolfart, J., \textsc{Darstellungen von
    $SL_{2}(\mathbb{Z}/p^{\lambda}\mathbb{Z})$ und Thetafunkionen I}, Math. Z.,
    vol 138, 239--254 (1974).
\bibitem[NW2]{[NW2]} Nobs, A., Wolfart, J., \textsc{Die irreduziblen
    Darstellungen der Gruppen $SL_{2}(\mathbb{Z}_{p})$, insbesondere
    $SL_{2}(\mathbb{Z}_{2})$ II}, Comment. Math. Helv., vol 51 issue 4, 491--526
    (1976).
\bibitem[Ra]{[Ra]} Ranga Rao, R., \textsc{On Some Explicit Formulas in the
    Theory of Weil Representation}, Pacific J. Math., vol 157 no. 2, 335--371,
    (1993).
\bibitem[Sche]{[Sche]} Scheithauer, N. R., \textsc{The Weil representation of
    $SL_{2}(\mathbb{Z})$ and some applications}, Int. Math. Res. Not., no. 8,
    1488--1545 (2009).
\bibitem[Scho]{[Scho]} Schoeneberg, B., \textsc{Das Verhalten von mehrfachen
    Thetareihen bei Modulsubstitutionen}, Math. Ann., vol 116 no. 1, 511--523
    (1939).
\bibitem[Sh]{[Sh]} Shimura, G., \textsc{On the Transformation Formulas of Theta
    Series}, Amer. J. Math., vol 115 no. 5, 1011--1052 (1993).
\bibitem[Sn]{[Sn]} Shintani, T., \textsc{On Construction of Holomorphic Cusp
    Forms of Half Integral Weight}, Nagoya Math. J., vol 58, 83--126 (1975).
\bibitem[Str]{[Str]} Str\"{o}mberg, F., \textsc{Weil Representations Associated
    to Finite Quad-ratic Modules}, to appear in Math. Z.
\bibitem[W]{[W]} Weil, A., \textsc{Sur Certains groupes d'Op\'{e}rateurs
    Unitaires}, Acta Mathematica, vol 111 no. 1, 143--211 (1964).
\bibitem[Wo]{[Wo]} Wolfart, J., \textsc{Darstellungen von
    $SL_{2}(\mathbb{Z}/p^{\lambda}\mathbb{Z})$ und Thetafunkionen II},
    Manuscripta math., vol 17, 339--362 (1975).
\bibitem[Z1]{[Z1]} Zemel. S., \textsc{On Lattices over Valuation Rings of
    Arbitrary Rank}, to appear in the Journal of Algebra.
\bibitem[Z2]{[Z2]} Zemel. S., \textsc{A Gross--Kohnen--Zagier Type Theorem for
    Higher Codimensional Heegner Cycles}, submitted for publication.

\end{thebibliography}
\end{document}